\documentclass[reqno]{amsart}

\usepackage{amssymb,amsmath}
\usepackage{euscript}
\usepackage{graphicx}
\usepackage{tikz}
\usepackage{tikz-cd}
\usepackage{hyperref}
\usepackage{mathrsfs}
\usepackage{dsfont}
\usepackage{mathtools}

\newtheorem{thm}{Theorem}[section]
\newtheorem{cor}[thm]{Corollary}
\newtheorem{lem}[thm]{Lemma}
\newtheorem{prop}[thm]{Proposition}
\newtheorem{defin}[thm]{Definition}
\newtheorem{def-lem}[thm]{Definition-Lemma}
\newtheorem{conj}[thm]{Conjecture}

\theoremstyle{remark}

\newtheorem{rem}[thm]{Remark}

\newtheorem{example}[thm]{Example}

\newtheorem{assu}{Assumption}

\numberwithin{equation}{section}

\newcommand{\bbC}{\mathbb{C}}

\newcommand{\bbE}{\mathbb{E}}
\newcommand{\bbF}{\mathbb{F}}

\newcommand{\bbM}{\mathbb{M}}

\newcommand{\bbR}{\mathbb{R}}

\newcommand{\bbS}{\mathbb{S}}

\newcommand{\bbX}{\mathbb{X}}

\newcommand{\bbZ}{\mathbb{Z}}

\newcommand{\bfH}{\mathbf{H}}

\newcommand{\bfTheta}{\mathbf{\Theta}}

\newcommand{\scrF}{\mathscr{F}}

\newcommand{\scrP}{\mathscr{P}}
\newcommand{\scrU}{\mathscr{U}}

\newcommand{\calC}{\mathcal{C}}

\newcommand{\calH}{\mathcal{H}}
\newcommand{\calJ}{\mathcal{J}}

\newcommand{\calO}{\mathcal{O}}
\newcommand{\calP}{\mathcal{P}}

\newcommand{\frakD}{\mathfrak{D}}
\newcommand{\frakE}{\mathfrak{E}}
\newcommand{\frakF}{\mathfrak{F}}

\newcommand{\frakM}{\mathfrak{M}}

\newcommand{\frakQ}{\mathfrak{Q}}
\newcommand{\frakR}{\mathfrak{R}}

\newcommand{\frakX}{\mathfrak{X}}
\newcommand{\frakY}{\mathfrak{Y}}

\DeclareMathOperator{\codim}{codim}

\DeclareMathOperator{\crit}{Crit}

\DeclareMathOperator{\ind}{ind}

\DeclareMathOperator{\mas}{Mas}
\DeclareMathOperator{\ob}{ob}
\DeclareMathOperator{\param}{Par}

\newcommand{\aut}{\mathrm{Aut}}

\newcommand{\flow}{\mathrm{Flow}}
\newcommand{\fr}{\mathrm{fr}}

\newcommand{\ho}{\mathrm{Ho}}
\newcommand{\identity}{\mathrm{Id}}

\newcommand{\map}{\mathrm{Map}}

\newcommand{\reg}{\mathrm{reg}}
\newcommand{\sfr}{\mathrm{sfr}}
\newcommand{\spectra}{\mathrm{Sp}}

\newcommand{\std}{\mathrm{std}}

\renewcommand{\th}{\mathrm{Th}}

\newcommand{\unit}{\mathds{1}}

\newcommand{\abs}[1]{\lvert#1\rvert}



\setcounter{tocdepth}{1}

\title{Parameterized Lagrangian Floer homotopy}
\author{Kenneth Blakey}
\address{Department of Mathematics, MIT, 182 Memorial Drive, Cambridge, MA 02139, U.S.A.} 
\email{kblakey@mit.edu}

\author{Ciprian M. Bonciocat}
\address{Department of Mathematics, Stanford University, 450 Jane Stanford Way, Building 380,
Stanford, CA 94305-2125, USA.}
\email{ciprianb@stanford.edu}

\begin{document}

\begin{abstract}
We construct the Lagrangian Floer homotopy type, in the exact setting, as a spectrum parameterized over the moduli space of Maslov data. Our primary motivation for this construction is to provide stronger lower bounds for (possibly degenerate) Lagrangian intersections in plumbings of cotangent bundles.
\end{abstract}

\maketitle
\tableofcontents

\section{Introduction}
\subsection{Background and context}
The present article is about applying Floer homotopy theory to the study of (possibly degenerate) Lagrangian intersections; it was really spawned from intertwining the following two lines of thought (cf. Theorem \ref{thm:main} for the present article's main contribution.)

\subsubsection{Lagrangian intersections}
Let $(M,\omega)$ be a symplectic manifold together with two Lagrangians $L_i\subset M$, $i=0,1$. A fundamental problem in symplectic geometry is determining lower bounds for the cardinality of the intersection $L_0\cap L_1$.

\begin{conj}[Degenerate Arnol'd Conjecture]
Suppose $M$ is closed and the pair $(L_0,L_1)$ is Hamiltonian isotopic. Moreoever, suppose $L_i$ is: connected, closed, and $\omega\vert_{\pi_2(M,L_i)}=0$; then 
    \begin{equation}
    \abs{L_0\cap L_1}\geq\min\big\{\abs{\crit(f)}:f\in C^\infty(L_i)\big\},
    \end{equation}
where $\crit(f)$ is the set of critical points of $f$.
\end{conj}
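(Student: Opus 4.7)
The plan is to deduce the bound by applying the parameterized Lagrangian Floer homotopy type of Theorem~\ref{thm:main} to the pair $(L_0,L_1)$, reducing via Hamiltonian invariance to a self-intersection problem for $L:=L_0$, identifying the resulting spectrum with (a parameterized lift of) the stable homotopy type of $L$ by a PSS-type argument, and finally converting the cellular complexity of this spectrum into a critical-point count via Lusternik--Schnirelmann theory.

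First, since $L_0\cap L_1$ is trivially infinite unless it is a finite set, I may assume that the intersection consists of isolated (but possibly degenerate) points. Choose a $C^1$-small Hamiltonian $H_\epsilon$ whose time-one map $\phi_\epsilon$ satisfies $\phi_\epsilon(L_0)\pitchfork L_1$. Then the parameterized Floer spectrum $\calF_\epsilon:=\calF(\phi_\epsilon(L_0),L_1)$ is well-defined and admits a chain-level model with generators indexed by $\phi_\epsilon(L_0)\cap L_1$; in particular, the minimum number of cells in any CW-model of $\calF_\epsilon$ is at most $\abs{\phi_\epsilon(L_0)\cap L_1}$. Hamiltonian invariance of $\calF(-,-)$ combined with the assumption that $(L_0,L_1)$ is Hamiltonian isotopic produces an equivalence $\calF_\epsilon\simeq\calF(L,L)$.

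Second, a parameterized PSS-type map identifies $\calF(L,L)$ with the Thom spectrum $L^{-TL}$, lifted to the moduli of Maslov data; the hypothesis $\omega\vert_{\pi_2(M,L)}=0$ prevents disc bubbling and makes this identification integrally defined without virtual perturbation. Since $L$ is closed, Atiyah/Poincar\'e duality shows that $L$ and $L^{-TL}$ have the same minimal cell count, and by a theorem of Takens (extending Lusternik--Schnirelmann) this common number coincides with $\min\{\abs{\crit(f)}:f\in C^\infty(L)\}$. Combined with the first step, this yields
\begin{equation*}
\abs{\phi_\epsilon(L_0)\cap L_1}\;\geq\;\min\bigl\{\abs{\crit(f)}:f\in C^\infty(L)\bigr\}
\end{equation*}
for every admissible perturbation $\phi_\epsilon$.

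Finally, I would pass to the degenerate limit $\epsilon\to 0$. Each isolated point of $L_0\cap L_1$ persists, by the implicit-function theorem applied to the graph of $\phi_\epsilon$, as a nonempty cluster of transverse points of $\phi_\epsilon(L_0)\cap L_1$, so the number of clusters is bounded above by $\abs{L_0\cap L_1}$. The main obstacle is showing that the Floer-theoretic cellular lower bound survives clustering: a priori, many transverse points can collapse into a single original intersection, and classical Floer \emph{homology} would then lose information. The point of the present parameterized construction is precisely that the stable homotopy type is strictly finer than homology, so cup-length arguments in arbitrary generalized cohomology theories (and, via the Takens-type identification, cell-counts) can be localized near each cluster; completing this localization, and matching it against local models of degenerate intersections, is where I expect the essential work to lie.
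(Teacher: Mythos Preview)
The statement you are attempting to prove is labeled in the paper as a \emph{Conjecture}, not a theorem; the paper provides no proof, and in the generality stated (closed $M$, $\omega\vert_{\pi_2(M,L_i)}=0$) it remains open. There is therefore no ``paper's own proof'' to compare your proposal against.

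Your argument, as written, does not close the gap. Several steps are problematic. First, Theorem~\ref{thm:main} and the parameterized construction in the paper are carried out under Assumption~\ref{assu:main} (Liouville $M$, exact Lagrangians, Maslov data), not under the weaker hypothesis $\omega\vert_{\pi_2(M,L)}=0$ on a closed $M$; the PSS identification $\calF(L,L)\simeq L^{-TL}$ you invoke is not available in that generality. Second, the claimed equality between the minimal number of cells in a CW model of $L$ (or of $L^{-TL}$) and $\min\{\abs{\crit(f)}\}$ is not a theorem of Takens in this generality: Takens-type results require strong hypotheses (typically high dimension and simple connectivity), and for general closed $L$ the minimal critical-point number can strictly exceed any stable-homotopy invariant such as minimal cell count. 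Third, and most seriously, you yourself identify the final ``clustering'' step as the essential obstacle and do not carry it out. The assertion that the parameterized stable homotopy type allows one to localize a cell-count lower bound near each degenerate intersection point is exactly what would need to be proven; nothing in the paper's construction supplies this, and the known cuplength-type bounds (Hofer, Floer, Hirschi--Porcelli, and the results of \cite{Bla24,Bla}) are strictly weaker than the conjectured critical-point bound precisely because this localization is not understood.

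In short: the proposal outlines a plausible strategy but leaves the decisive step as an exercise, and that step is the content of the open conjecture.
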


In the case of the 0-section of a cotangent bundle of a closed smooth manifold, the degenerate Arnol'd conjecture is essentially known, cf. \cite{LS85}. When $(L_0,L_1)$ also satisfies the property that their intersection is transverse, \cite[Theorem 1]{Flo88b} proves a lower bound via the sum of the Betti numbers of $L_i$. If we allow the intersection of $(L_0,L_1)$ to be possibly degenerate, \cite[Theorem 3]{Hof88} and \cite[Theorem 1]{Flo89a} independently prove a lower bound via $c_{\bbZ/2}(L_i)$, i.e, the \emph{$\bbZ/2$-cuplength} of $L_i$: 
    \begin{equation}
    c_{\bbZ/2}(L_i)\equiv\inf_k\big\{k\in\bbZ_{\geq0}:\forall\alpha_1,\ldots,\alpha_k\in\widetilde{H}(L_i;\bbZ/2),\alpha_1\smile\cdots\smile\alpha_k=0\big\}.
    \end{equation}
Hirschi-Porcelli upgrade this $c_{\bbZ/2}(L_i)$ lower bound to a $c_\frakR(L_i)$ lower bound, where $\frakR$ is any ring spectrum, assuming certain $\frakR$-orientability hypotheses on the relevant compactified Floer-type moduli spaces, cf. \cite[Theorem 1.9]{HP22}. Finally, work of the first author, in the exact setting, goes beyond the case that $(L_0,L_1)$ is Hamiltonian isotopic (assuming $M$ admits a stable $\bbR$-polarization $\Lambda$ and $L_i$ admits a framed brane structure compatible with $\Lambda$, hence the \emph{(framed) Lagrangian Floer homotopy type} $\frakF^\Lambda$ exists; this is a spectral refinement of Lagrangian Floer cohomology which we review shortly). In particular, \cite[Theorem 1.11]{Bla24} shows we may use iterated compositions of Steenrod squares on 
    \begin{equation}
    H^*(\frakF^\Lambda;\bbZ/2)\cong HF^*(L_0,L_1;\bbZ/2)
    \end{equation}
to give a lower bound, provided each Steenrod square in the composition has degree at least $(\dim L_i-1)/2$. Moreover, \cite{Bla} extends this Steenrod square lower bound to a lower bound via iterated right coactions of the $\frakE$-generalized dual Steenrod algebra $\frakE_*\frakE$ on $\frakE_*\frakF^\Lambda$ (again with a degree condition).\footnote{Here, $\frakE$ is a ring spectrum such that $\frakE_*\frakE$ is a flat right $\pi_*\frakE$-module; examples include $KO$, $KU$, $MO$, $MU$, $MSp$, $\bbS$, and $H\bbF_p$.} \emph{Loc. cit.} also shows that the coaction of $\frakE_*\frakE$ on $\frakE_*\frakF^\Lambda$ is already captured by the dual $(\frakE\wedge\frakE)$-spectral quantum cap product (i.e., the right coaction of $(\frakE\wedge\frakE)_*L_i$ on $(\frakE\wedge\frakE)_*\frakF^\Lambda$) and for completeness proves a lower bound via the $\frakR$-spectral quantum caplength. In fact, the lower bounds in \cite{Bla,Bla24} show the guaranteed intersection points are of pairwise distinct action.

\subsubsection{Floer homotopy theory}
Cohen-Jones-Segal \cite{CJS95} introduced Floer homotopy idea as an idea to provide spectral refinements of various Floer (co)homologies. It was essentially sketched in \emph{loc. cit.}, with details given by Large \cite{Lar21}, how to spectrally refine Lagrangian Floer cohomology under the assumptions that:\footnote{Technically, Large constructed the Lagrangian Floer homotopy type under slightly stronger assumptions. However, the construction has been carried out under various assumptions similar to (and including) the ones listed here, cf. \cite{ADP24,Bla24,CK23,PS24a,PS24b}.}
\begin{itemize}
\item $M$ is a Liouville manifold;
\item $L_i$ is an exact Lagrangian which is either closed or cylindrical at infinity;
\item $M$ admits a stable $\bbR$-polarization $\Lambda$, i.e., a choice of factorization of a classifying map for the stable tangent bundle of $M$: 
    \begin{equation}
    \begin{tikzcd}
    & BO\arrow[dr,"(\cdot)\otimes_\bbR\underline{\bbC}"]\arrow[d,Rightarrow] & \\
    M\arrow[rr,"{TM}",swap]\arrow[ur,"\Lambda"] & \phantom{*} & BU
    \end{tikzcd}
    ;
    \end{equation}
\item $L_i$ admits a framed brane structure compatible with $\Lambda$, i.e., a choice of homotopy between a classifying map for the stable tangent bundle of $L_i$ and $\Lambda\vert_{L_i}$: 
    \begin{equation}
    \begin{tikzcd}[column sep = large]
    L_i \arrow[r,bend left = 20, "{TL_i}", ""{name=U,inner sep=1pt,below}]
    \arrow[r,bend right = 20,"\Lambda\vert_{L_i}"{below},swap, ""{name=D,inner sep=1pt,above}]
    & BO
    \arrow[Rightarrow, from=U, to=D, ""]
    \end{tikzcd}
    .
    \end{equation}
\end{itemize}
Under these assumptions, the compactified moduli spaces of Floer trajectories admit coherent stable framings and can be organized to obtain the (framed) Lagrangian Floer homotopy type $\frakF^\Lambda$. Note, an important point to observe is that, \emph{a priori}, $\frakF^\Lambda$ depends on $\Lambda$; this is a central theme of the present article.

There are now two digressions to make. Firstly, work of Abouzaid-Blumberg \cite{AB24} has rewritten the foundations of Floer homotopy theory using the language of stable $\infty$-categories. Since this will be better suited for the constructions in the present article, we will use these foundations. Secondly, the following assumptions suffice to construct the (spherically framed) Lagrangian Floer homotopy type; we will use these assumptions in the present article. Let $\scrP(L_0,L_1)$ be the space of paths starting on $L_0$ and ending on $L_1$; this space is homotopy equivalent to the homotopy fiber product of the inclusions $L_i\to M$. There is a map 
    \begin{align}\label{eq:intro}
    \scrP(L_0,L_1)&\to BO \\
    \gamma&\mapsto \big[T_{\gamma(1)}L_1\big]-\big[T_{\gamma(0)}L_0\big], \nonumber
    \end{align}
where the difference is defined after parallel transport along $\gamma$, whose composition with the map $BO\to BU$ is canonically trivially. Therefore, we have a canonical lift of \eqref{eq:intro} to $U/O$:
    \begin{equation}\label{eq:umodo}
    \begin{tikzcd}
    & U/O\arrow[d] \\
    \scrP(L_0,L_1)\arrow[ur]\arrow[r] & BO
    \end{tikzcd}
    ,
    \end{equation}
where we use the standard fiber sequence $O\to U\to U/O$. Finally, we consider the composition
    \begin{equation}\label{eq:assu}
    \scrP(L_0,L_1)\to U/O\xrightarrow{\sim}B(\Omega^\infty KO)\to B^2O\to B^2GL_1\bbS,
    \end{equation}
where the second map comes from real Bott periodicity, the third map is the projection, and the fourth map is the 2-fold delooping of the real $J$-homomorphism. 

\begin{assu}\label{assu:main}
Consider the following.
\begin{itemize}
\item $M$ is a Liouville manifold.
\item $L_i$ is an exact Lagrangian which is either closed or cylindrical at infinity.
\item $(L_0,L_1)$ admits Maslov data, i.e., a choice of null-homotopy $\theta$ of \eqref{eq:assu}.
\end{itemize}
\end{assu}

\begin{rem}
Clearly, a choice of stable $\bbR$-polarization and framed brane structures gives a choice of Maslov data. However, the converse need not hold, cf. \cite{AGLW25}.
\end{rem}

We will prove the folklore statement that, under Assumption \ref{assu:main}, the compactified moduli spaces of Floer trajectories admit coherent $\bbS$-orientations and can be organized to obtain the (spherically framed) Lagrangian Floer homotopy type $\frakF^\theta$; this is an $\frakR^\sfr$-module spectrum, where $\frakR^\sfr$ is a suitable ring spectrum conjecturally related to spherically framed bordism, cf. Remark \ref{rem:structuredflow} and Appendix \ref{appendix:sfrfr}. We do this by utilizing a new construction of the Bott isomorphism $U/O\xrightarrow{\sim}B(\Omega^\infty KO)$, due to the second author \cite{Bon25}, which is better suited for studying the index theory of Cauchy-Riemann operators with totally real boundary conditions. Again, $\frakF^\theta$ \emph{a priori} depends on $\theta$.

\subsubsection{Colliding worlds}
Consider the following toy example. 

\begin{example}\label{example:introtoy}
Let $M$ be the plumbing $T^*S^2\cup_{NS^1}T^*S^2$ of two $T^*S^2$'s along the normal bundle $NS^1\to S^1$ of the standard embedding $S^1\to S^2$ and consider the obvious (non-Hamiltonian isotopic) Lagrangian pair $(L_0,L_1)$, where $L_i$ is diffeomorphic to $S^2$, which intersects cleanly along $S^1$ (cf. Example \ref{example:plumbing} for the general construction). One should expect that we cannot deform $L_i$, via a compactly supported Hamiltonian deformation, so that $L_0$ intersects $L_1$ in a single (possibly degenerate) point; in fact, one should expect that we require at least 2 (possibly degenerate) intersection points of distinct action. Since we are currently considering the non-Hamiltonian isotopic case, we would like to find a Steenrod square or spectral quantum cap product which guarantees these 2 action levels must always exist. If we consider the ``standard'' stable $\bbR$-polarization $\Lambda_\std$ on $M$ together with the ``standard'' framed brane structure on $L_i$ (cf. the proof of Proposition \ref{prop:pullback} for the general construction), we have a homotopy equivalence $\frakF^{\Lambda_\std}\simeq\Sigma^\infty_+S^1$. In particular, there are no interesting Steenrod squares or spectral quantum cap products which provide a lower bound (note, the spectral quantum cap product is uninteresting since the embedding $S^1\to S^2$ is null-homotopic). However, there is a ``non-standard'' stable $\bbR$-polarization $\Lambda$ on $M$ together with a ``non-standard'' framed brane structure on $L_i$, obtained by twisting the standard structures by the non-trivial line bundle on $S^1$, which gives a homotopy equivalence $\frakF^{\Lambda}\simeq\Sigma^{-1}\bbR P^2$. Now, $\Sigma^{-1}\bbR P^2$ has the desired Steenrod square.
\end{example}

Example \ref{example:introtoy} shows that considering different homotopical input data to construct the Lagrangian Floer homotopy type can yield more geometric information. Note, this example really only used the fact that $\pi_0$ of the space of framing data for the Lagrangian Floer homotopy type of $(S^2,S^2)$ in $T^*S^2\cup_{NS^1}T^*S^2$ is non-trivial. If we consider the plumbing $T^*S^n\cup_{NS^k}T^*S^n$ along the standard embedding $S^k\to S^n$ ($k<n$), we are naturally led to the following more general strategy.

\subsection{Main result}\label{subsec:mainresult}
Let $\mas$ be the space of null-homotopies $\theta$ of \eqref{eq:assu} (i.e., the space of Maslov data) and $\mas^\theta$ the pointed space  $\mas,\theta\big)$. The aim of the present article is to construct the (spherically framed) Lagrangian Floer homotopy type as a parameterized spectrum, i.e., an $\infty$-functor 
    \begin{equation}
    \frakF^\sfr:\mas^\theta\to\mathrm{Mod}_{\frakR^\sfr},
    \end{equation}
where $\mathrm{Mod}_{\frakR^\sfr}$ is the stable $\infty$-category of $\frakR^\sfr$-module spectra. Our main application of this construction is the following.

\begin{thm}\label{thm:main}
Let $Q_i$ be two smooth manifolds together with an embedding of a closed smooth manifold $C\to Q_i$ such that the normal bundle $N_CQ_i\to C$ of $C$ in $Q_i$ is isomorphic to a fixed real vector bundle $NC\to C$. Consider the plumbing $M$ of $T^*Q_i$ along $NC$ and $(L_0,L_1)$ the obvious (non-Hamiltonian isotopic) Lagrangian pair, where $L_i$ is diffeomorphic to $Q_i$, which intersects cleanly along $C$, cf. Example \ref{example:plumbing}. Suppose in addition that $C$ is: connected, orientable, and positive-dimensional; then any compactly supported Hamiltonian deformation of $L_0$ intersects any compactly supported Hamiltonian deformation of $L_1$ in at least 2 (possibly degenerate) points of distinct action.
\end{thm}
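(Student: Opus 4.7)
The proof strategy uses the parameterized Lagrangian Floer homotopy type $\frakF^\sfr:\mas^\theta\to\mathrm{Mod}_{\frakR^\sfr}$ constructed in this paper together with the cohomological lower bounds of \cite{Bla24,Bla}. Since each choice of Maslov data $\theta\in\mas$ yields its own Floer spectrum $\frakF^\theta$, and the lower bound on $|L_0\cap L_1|$ depends only on cohomology operations present on $\frakF^\theta$, the plan is to exhibit one specific $\theta$ such that $\frakF^\theta$ admits a non-trivial Steenrod square (or, more generally, a non-trivial coaction on $\frakE_*\frakF^\theta$) of degree at least $(\dim Q_i-1)/2$, so that \cite[Theorem 1.11]{Bla24} or \cite{Bla} forces two intersection points of pairwise distinct action. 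Example~\ref{example:introtoy} is the blueprint for the general argument.

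The first step is to fix a ``standard'' Maslov datum $\theta_\std$ coming from the cotangent-bundle structures of the two summands of the plumbing and to observe that the resulting Floer spectrum satisfies $\frakF^{\theta_\std}\simeq\Sigma^\infty_+ C$, which carries no non-trivial Steenrod squares. Using the clean-intersection structure $L_0\cap L_1=C$, one identifies the connected components $\pi_0\mas$ relative to $\theta_\std$ with $[C,BGL_1\bbS]$; the hypotheses that $C$ is connected, orientable, and positive-dimensional ensure that this latter group is non-trivial (for instance via pullback along a map to a positive-dimensional sphere, where the stable stems of $\bbS$ contribute). One then selects a non-trivial class to build a non-standard datum $\theta_\twist\in\mas$, in direct analogy with the M\"obius twist of the toy example. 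The main computational task is then to identify $\frakF^{\theta_\twist}$ with the Thom-like spectrum $C^\nu$ of the virtual stable spherical fibration $\nu$ over $C$ determined by the class $\theta_\twist-\theta_\std$, by running the Morse-Bott Floer complex along $C$ with the twisted parameterized framing data from the $\frakF^\sfr$ construction and the improved Bott-isomorphism description of \cite{Bon25}. The Stiefel-Whitney classes of $\nu$ (or higher $\frakE$-characteristic classes) then furnish the required non-trivial cohomology operations between the two bottom filtration levels of $\frakF^{\theta_\twist}$, and applying the lower bound theorems yields the two action levels.

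The main obstacle is executing the identification $\frakF^{\theta_\twist}\simeq C^\nu$ in full generality. The toy example exploits the elementary topology of $S^1$, whereas for arbitrary $C$ one must (i) construct a non-trivial twist whose existence is compatible with the parameterized Floer framework, (ii) perform the Morse-Bott computation uniformly in $C$ while carefully tracking the twisted framings through the $\frakF^\sfr$ functoriality, and (iii) arrange that the degree of the resulting cohomology operation on $C^\nu$ satisfies the hypothesis of \cite[Theorem 1.11]{Bla24}. If this last degree condition turns out to be binding, one should upgrade to the full $\frakE$-generalized coaction lower bound of \cite{Bla}, where a suitably flexible choice of $\frakE$ gives access to a richer family of operations and relaxes the degree restriction.
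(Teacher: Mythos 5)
Your strategy is genuinely different from the paper's, and unfortunately it has a gap that is not merely technical but structural.

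You propose to find a \emph{single} Maslov datum $\theta_\twist$ for which $\frakF^{\theta_\twist}\simeq C^\nu$ carries a non-trivial Steenrod square (or $\frakE$-coaction) of degree at least $(\dim Q_i-1)/2$, and then invoke the cuplength-type lower bounds of \cite{Bla24,Bla}. The problem is that any Thom spectrum over $C$ has reduced cohomology concentrated in a range of width $\dim C$, so the largest possible non-trivial Steenrod square on $C^\nu$ has degree at most $\dim C$. The theorem as stated places no hypothesis relating $\dim C$ to $\dim Q_i$: one is free to take, say, $C=S^1$ embedded in $Q_i$ of very high dimension, in which case $(\dim Q_i-1)/2>\dim C$ and \emph{no} choice of $\theta$ within the framework can satisfy the degree condition of \cite[Theorem 1.11]{Bla24}. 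You flag this possible issue at the end and suggest switching to $\frakE$-coactions, but the analogous degree constraint in \cite{Bla} persists and is not obviously relaxed enough by any $\frakE$; in any case no such argument is supplied. There is also a smaller gap: the map $\map_*(C,BGL_1\bbS)\to\mas^\theta$ constructed in Lemma \ref{lem:BGL} is not asserted to be surjective or a $\pi_0$-isomorphism, so the identification of $\pi_0\mas$ with $[C,BGL_1\bbS]$ is not established; and the non-triviality of $[C,BGL_1\bbS]$ under the stated hypotheses is not actually needed.

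The paper takes a different route that sidesteps the degree obstruction entirely, and that difference is the whole point of building the \emph{parameterized} Floer spectrum. Rather than comparing abstract homotopy types of $\frakF^\theta$ for distinguished $\theta$'s, the paper works only over the \emph{connected component} $\map_*(C,BGL_1\bbS)_0$ of the trivial fibration, where every $\frakF^{\theta_\Phi}$ is abstractly equivalent to $\Sigma^\infty_+C\wedge\frakR^\sfr$ --- there is no single distinguished twist with extra operations. The content is instead Proposition \ref{prop:non-zero}: the $\infty$-functor \eqref{eq:non-zeroaux1} does not factor through a point, i.e.\ the \emph{family} of Floer spectra is a non-trivial parameterized spectrum over $\map_*(C,BGL_1\bbS)_0$. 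This is proved via the block-matrix analysis of $\aut(\Sigma^\infty_+C)$ (Corollary \ref{cor:identity}) and the $p$-local splitting of $\Sigma^\infty C$ into Moore spectra (Theorem \ref{thm:appendixb}), which is where orientability of $C$ enters. The main theorem then follows by contradiction: if all intersection points of a compactly supported Hamiltonian deformation had a common action, the Floer moduli spaces would localize to contractible neighborhoods, the comparison maps $\bbF^{H',J'}(x',y')\to\Omega\scrP(L_0,L_1)$ would factor through contractible loop spaces, and consequently the null-homotopies supplied by any $\theta$ in the image of $\map_*(C,BGL_1\bbS)_0$ would become canonical and $\theta$-independent --- forcing \eqref{eq:non-zeroaux1} to factor through a point, contradicting Proposition \ref{prop:non-zero}. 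To rescue your approach you would need to (i) establish that a suitable $\theta_\twist$ exists in general and (ii) remove or weaken the degree constraint in the cuplength bounds, neither of which seems feasible without essentially reproducing the parameterized non-triviality argument.
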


\begin{rem}
We still consider the setup of the previous theorem. Of course, the work of Po\'zniak \cite{Poz94} (which essentially proves a spectral sequence computing the Lagrangian Floer cohomology of cleanly intersecting Lagrangians) immediately implies that any compactly supported Hamiltonian deformation of $L_0$ intersects any compactly supported Hamiltonian deformation of $L_1$ in at least 2 non-degenerate points. Theorem \ref{thm:main} shows that one cannot achieve less intersection points and/or action levels by allowing degenerate intersection points.
\end{rem}

The proof of our main result proceeds schematically as follows. We show that, when we consider a plumbing of cotangent bundles, there is a natural map of pointed spaces
    \begin{equation}
    \map_*(C,BGL_1\bbS)\to\mas^\theta,
    \end{equation}
where $\map_*(C,BGL_1\bbS)$ is the space of pointed maps $C\to BGL_1\bbS$ (here we fix a basepoint $c_0\in C$ and $GL_1\bbS$ is pointed at the identity), $\map_*(C,BGL_1\bbS)$ is pointed at the trivial map, and $\theta$ is a ``suitable choice of basepoint'' induced by the standard stable $\bbR$-polarization on $M$ and framed brane structure on $L_i$. The main insight into constructing this map is that there is a natural map $\scrP(L_0,L_1)\to\Omega\Sigma C$, where $\Omega\Sigma C$ is the free $\bbE_1$-group on $C$ via the James construction, and, since $BGL_1\bbS$ is an $\bbE_\infty$-group, we may use the universal property to extend any map $C\to BGL_1\bbS$ to $\Omega\Sigma C\to BGL_1\bbS$. We show that, under the hypotheses on $C$ in the statement of Theorem \ref{thm:main}, the $\infty$-functor 
    \begin{equation}\label{eq:restrictionintro}
    \map_*(C,BGL_1\bbS)_0\to \map_*(C,BGL_1\bbS)\to\mas^\theta\xrightarrow{\frakF^\sfr}\mathrm{Mod}_{\frakR^\sfr},
    \end{equation}
where $\map_*(C,BGL_1\bbS)_0$ is the connected component of the trivial map, cannot factor through a point (cf. Proposition \ref{prop:non-zero}). However, if the conclusion of Theorem \ref{thm:main} did not hold, then \eqref{eq:restrictionintro} would factor through a point and yield an immediate contradiction (cf. Subsection \ref{subsec:lagrangianintersections}). We do this essentially by identifying \eqref{eq:restrictionintro} with the $\infty$-functor which, on 0-simplices, sends a stable spherical fibration to its associated Thom spectrum smashed with $\frakR^\sfr$ (cf. Proposition \ref{prop:pullback}).

\begin{rem}
We strongly suspect Theorem \ref{thm:main} is true even in the non-orientable case. The reason we require orientability of $C$ is to ensure $H_*(C;\bbZ_{(p)})$ has a non-trivial $\bbZ_{(p)}$ summand for all sufficiently large primes $p$; this is an essential ingredient in our proof of Theorem \ref{thm:appendixb} which we use to prove our main result. We believe this issue can be overcome by working over Morava $K$-theories instead of $\bbS$ and suitably downgrading the definition of Maslov data by replacing $BGL_1\bbS$ with $BGL_1 K(n)$, where $K(n)$ is a suitably large Morava $K$-theory at the prime $p=2$. The advantage is that we now let $n$ be sufficiently large; in particular, we can still get 2-local information, i.e., the $\bbF_2$-summand in the top-degree of $H_*(C;\bbF_2)$ will ensure the existence of a non-trivial summand over $K(n)$. However, we digress, since it would require significant changes to an already well-established framework only to obtain a marginal improvement of our main result.
\end{rem}

In future work, we plan to investigate combining the truncated flow categories of \cite{Bon25} together with the various results of \cite{Bla,Bla24} and the present article to provide stronger lower bounds for (possibly degenerate) Lagrangian intersections in monotone symplectic manifolds.

\subsection*{Acknowledgments}
The first author would like to thank Johan Asplund and Noah Porcelli for helpful comments. The first author was partially supported by an NSF Graduate Research Fellowship award during this work. The second author would like to thank the sponsors William R. and Sara Hart Kimball for supporting him through the Stanford Graduate Fellowship at Stanford University, as well as the Simons Foundation for providing a research assistantship, as well as funding several conferences that benefited him in his research.

\section{Abouzaid-Blumberg's foundations for Floer homotopy}
In this section, we will review the definition of Abouzaid-Blumberg's \cite{AB24} stable $\infty$-category of flow categories (in particular, cf. Sections 3 and 4 of \emph{loc. cit.}).  Really, we will only review the definitions in a special case. We will not work with the full power of the theory since our applications to the exact setting do not require it, i.e., the compactified moduli spaces of Floer trajectories we will encounter are smooth manifolds with corners -- we do not require global Kuranishi charts. Moreover, for pedagogical reasons, we will: first define the objects (i.e., 0-simplices), then the morphisms (i.e., 1-simplices), and finally the higher simplices (i.e., $n$-simplices with $n\geq2$); even though it would be quicker to simply define all simplices at the same time.

\subsection{Objects}
Let $X$ be a smooth manifold with corners (by which we will always mean a $\langle k\rangle$-manifold in the sense of \cite{Lau00}). Associated to $X$ is a category $\calP_X$ whose objects are components of corner strata together with morphisms associated to reverse inclusion; in particular, the components of the interior of $X$ are minimal elements in $\calP_X$. There is a natural functor 
    \begin{equation}
    \codim:\calP_X\to\bbZ_{\geq0}
    \end{equation}
which sends a component of a corner stratum to its codimension in $X$ (note, we turn $\bbZ_{\geq0}$ into a category via its standard poset structure). 

Now, let $\calP$ be any category together with a functor $\codim:\calP\to\bbZ_{\geq0}$. For any object $p\in\ob\calP$, we denote by $\calP^p$ the overcategory of $p$.

\begin{defin}
We say $\calP$ is a \emph{model (for smooth manifolds with corners)} if, for any $p\in\ob\calP$, we have that $\calP^p$ is equivalent to the poset $2^{\{1,\ldots,\codim p\}}$; here, the poset structure on $2^{\{1,\ldots,\codim p\}}$ is given by inclusion, i.e., the initial object is the empty set.
\end{defin}

\begin{rem}
In practice, we will only work with models that are actually posets.
\end{rem}

Clearly, the example $\calP_X$ above is a model.

\begin{defin}
A \emph{stratified smooth manifold with corners} is a pair $(X,F:\calP_X\to\calP)$, where $X$ is a smooth manifold with corners and $F:\calP_X\to\calP$ is a functor from $\calP_X$ to a model $\calP$ such that (1) $F$ preserves the codimension and (2) for any $\sigma\in\ob\calP_X$, $F$ induces an equivalence $\calP^\sigma_X\xrightarrow{\sim}\calP^{F\sigma}$.
\end{defin}

\begin{defin}
A \emph{morphism} of stratified smooth manifolds with corners 
    \begin{equation}
    (X,F:\calP_X\to\calP)\to(X',F':\calP_{X'}\to\calP')
    \end{equation}
consists of a morphism of smooth manifolds with corners $X\to X'$ together with a functor $\calP\to\calP'$ making the following diagram commute: 
    \begin{equation}
    \begin{tikzcd}
    \calP_X\arrow[r]\arrow[d] & \calP_{X'}\arrow[d] \\
    \calP\arrow[r] & \calP'
    \end{tikzcd}
    .
    \end{equation}
\end{defin}

Given $(X,\calP_X\to\calP)$ and an object $p\in\ob\calP$, we may define the \emph{$p$-corner stratum} $(\partial^pX,\calP_{\partial^pX}\to\partial^p\calP)$ via defining $\partial^pX$ to consist of pairs $(p\to q,x)$, where $p\to q$ is an object of the undercategory $\partial^p\calP$ of $p$ and $x$ is an element of $X$ lying in a stratum labeled by $q$. Note, $\partial^pX$ is a stratified submanifold with corners of $X$.

Now, let $\calP$ be any set. For any two $x,y\in\calP$, we may construct a model $\calP(x,y)$ as follows. The objects are trees with only bivalent vertices whose edges are labeled by elements of $\calP$ such that the incoming resp. outgoing (semi-infinite) leaf is labeled by $x$ resp. $y$.\footnote{We can imagine an object of $\calP(x,y)$ as a tuple $(x,z,\ldots,z',y)$.} The morphisms are given by collapsing internal edges; in particular, the initial object is the tree with only the incoming and outgoing leaves. The functor $\codim:\calP(x,y)\to\bbZ_{\geq0}$ is given by counting the number of internal edges. For any three $x,y,z\in\calP$, there is a natural functor 
    \begin{equation}
    \calP(x,z)\times\calP(z,y)\to\calP(x,y)
    \end{equation}
given by concatenation. In particular, the collection 
    \begin{equation}
    \big\{\calP(x,y)\big\}_{x,y\in\calP}
    \end{equation}
yield a strict 2-category which, by an abuse of notation, we denote by $\calP$.

\begin{defin}
An \emph{(unstructured) flow category} $\bbX$ consists of the following data. 
\begin{enumerate}
\item An object set $\calP$.
\item For any two $x,y\in\calP$, a stratified compact smooth manifold with corners 
    \begin{equation}
    \big(\bbX(x,y),\calP(x,y)\big).
    \end{equation}
\item For any three $x,y,z\in\calP$, a morphism of compact stratified smooth manifolds with corners
    \begin{equation}
    \bbX(x,z)\times\bbX(z,y)\to\bbX(x,y),
    \end{equation}
which is an embedding of a codimension 1 boundary stratum that lifts the functor 
    \begin{equation}
    \calP(x,z)\times\calP(z,y)\to\calP(x,y)
    \end{equation}
(and, moreover, the codimension 1 boundary strata of $\bbX(x,y)$ are enumerated by such morphisms), such that the natural associativity diagram associated to four elements commutes. 
\end{enumerate}
\end{defin}

A quick example of a flow category is the \emph{unit} $\unit$ which consists of a single object with no morphisms. A more robust example is the following. Let $X$ be a closed smooth manifold together with a Morse-Smale pair $(f,g)$; there is a flow category $\bbM^{f,g}_X$ whose objects are the critical points of $f$ and whose morphism spaces are the compactified moduli spaces of Morse trajectories connecting critical points.

\begin{defin}\label{defin:framedflowcategory}
A \emph{spherically framed flow category} is a flow category $\bbX$ together with the following data. 
\begin{enumerate}
\item A real virtual vector space $V_x$ for any $x\in\ob\bbX$.
\item A real vector bundle $W(x,y)\to\bbX(x,y)$ for any two $x,y\in\ob\bbX$.
\item A real virtual bundle $I(x,y)\to\bbX(x,y)$, together with a trivialization as a stable spherical fibration, for any two $x,y\in\ob\bbX$. 
\item An isomorphism of real virtual bundles
    \begin{equation}
    T\bbX(x,y)+\underline{\bbR}+V_y+W(x,y)\cong V_x+I(x,y)+W(x,y),
    \end{equation}
for any two $x,y\in\ob\bbX$, such that the natural associativity diagram associated to three elements commutes.
\end{enumerate}
\end{defin}

\begin{rem}
We may define a \emph{framed flow category} analogously to Definition \ref{defin:framedflowcategory}, except we require $I(x,y)$ to be trivialized as a real virtual bundle.
\end{rem}

We denote by $\flow^\sfr$ resp. $\flow^\fr$ the stable $\infty$-category of spherically framed resp. framed flow categories (we will define the higher simplices shortly).

\begin{rem}\label{rem:structuredflow}
\cite[Theorem 1.6]{AB24} shows that there is a stable $\infty$-category $\flow^S$, where $S$ is some tangential structure (i.e., spherically framed or framed), whose 0-simplices are ``tangentially structured'' flow categories. Moreover, we define the spectrum $\frakX$ associated to a tangentially structured flow category $\bbX$ as the mapping spectrum $\flow^S(\unit,\bbX)$. It is in fact known that $\flow^S(\unit,\unit)$ is a ring spectrum whose homotopy groups are the bordism groups of the relevant bordism theory specified by $S$, and moreover, that $\frakX$ is a module over this ring spectrum. For example, there is an equivalence of stable infinity categories $\flow^\fr\cong\spectra$, where $\spectra$ is the stable $\infty$-category of spectra, cf. Proposition 1.10 of \emph{loc. cit.} (note, $\frakR^\fr\equiv\flow^\fr(\unit,\unit)\cong\bbS$ is the spectrum associated to the bordism theory $\Omega^\fr_*(\cdot)$ of stably framed closed smooth manifolds). We define $\frakR^\sfr\equiv\flow^\sfr(\unit,\unit)$; note, $\pi_*\frakR^\sfr$ is the bordism ring $\Omega^\sfr_*(*)$ associated to the bordism theory of closed smooth manifolds whose tangent bundles come with a choice of trivialization as a stable spherical fibration.
\end{rem}

Again, a quick example of a framed flow category is the \emph{shifted unit} $\Sigma^d\unit$, $d\in\bbZ$, which consists of a single object with no morphisms together with the framing given by a real virtual vector space of virtual rank $d$. And again, a more robust example is endowing $\bbM^{f,g}_X$ with its ``standard'' framing; the associated spectrum of $\bbM^{f,g}_X$ is then the suspension spectrum $\Sigma^\infty_+X$, cf. \cite[Subsection 4.3]{Bla24}. In fact, if $E:X\to BGL_1\bbS$ is any stable spherical fibration, then, by a straightforward extension of \emph{loc. cit.}, we may twist the standard framing to get a new framed flow category $\bbM^{f,g}_{X,E}$; the associated spectrum of $\bbM^{f,g}_{X,E}$ is then the Thom spectrum $X^E$. The analogous statement for twisting by a real virtual bundle $X\to BO$ has already been proven in the Cohen-Jones-Segal framework, cf. \cite[Theorem 5.9]{Bon24} and \cite[Theorem 3.11]{CK23}. Finally, we may consider $\bbM^{f,g}_{X,E}$ as a spherically framed flow category, and we see its associated spectrum is $X^E\wedge \frakR^\sfr$, cf. Proposition \ref{prop:sfrfr}.

\subsection{Morphisms}
Now, let $\vec{\calP}\equiv(\calP_0,\calP_1)$ be any 2-tuple of sets. For any two $x\in\calP_j$ and $y\in\calP_{j'}$, $0\leq j\leq j'\leq1$, we may construct a model $\vec{\calP}(x,y)$ as follows. The objects are trees with only bivalent vertices whose edges are labeled by elements of $\calP_k$, $j\leq k\leq j'$, such that elements on edges appear in ascending order from left to right and the incoming resp. outgoing leaf is labeled by $x$ resp. $y$. The morphisms are given by collapsing internal edges. The functor $\codim:\vec{\calP}(x,y)\to\bbZ_{\geq0}$ is given by counting the number of internal edges. For any three $x,y,z\in\calP$, there is a natural functor 
    \begin{equation}
    \vec{\calP}(x,z)\times\vec{\calP}(z,y)\to\vec{\calP}(x,y)
    \end{equation}
given by concatenation. In particular, the collection 
    \begin{equation}
    \big\{\vec{\calP}(x,y)\big\}_{x,y\in\calP}
    \end{equation}
yields a strict 2-category which, by an abuse of notation, we denote by $\vec{\calP}$. There is a natural inclusion of strict 2-categories 
    \begin{equation}
    \calP_j\to\vec{\calP}.
    \end{equation}

\begin{defin}
An \emph{(unstructured) flow bimodule} $\bbX_{01}:\bbX_0\to\bbX_1$ consists of the following data. 
\begin{enumerate}
\item An object 2-tuple $\vec{\calP}\equiv(\calP_0,\calP_1)$.
\item A flow category $\bbX_j$ with object set $\calP_j$.
\item For any two $x\in\calP_0$ and $y\in\calP_1$, a stratified compact smooth manifold with corners 
    \begin{equation}
    \big(\bbX_{01}(x,y),\vec{\calP}(x,y)\big).
    \end{equation}
\item For any two $x,z\in\calP_0$ and $y\in\calP_1$, a morphism of compact stratified smooth manifolds with corners
    \begin{equation}
    \bbX_0(x,z)\times\bbX_{01}(z,y)\to\bbX_{01}(x,y),
    \end{equation}
which is an embedding of a codimension 1 boundary stratum that lifts the functor 
    \begin{equation}
    \vec{\calP}(x,z)\times\vec{\calP}(z,y)\to\vec{\calP}(x,y)
    \end{equation}
such that the natural associativity diagram associated to four elements commutes. 
\item The analogous condition for any $x\in\calP_0$ and two $z,y\in\calP_1$.
\item For any $x\in\calP_0$ and $y\in\calP_1$, the codimension 1 boundary strata of $\bbX_{01}(x,y)$ are enumerated by morphisms of the above form; moreover, the natural associativity diagram intertwining these commutes.
\end{enumerate}
\end{defin}

Consider two flow bimodules $\bbX_{01}$ and $\bbX_{12}$; we may ``compose'' them as follows. Essentially, we define a flow bimodule $\bbX_{02}\equiv\bbX_{01}\circ\bbX_{12}$ such that, for any $x\in\calP_0$ and $y\in\calP_2$, 
    \begin{equation}
    \bbX_{02}(x,y)\equiv\coprod_{z\in\calP_1}\bbX_{01}(x,z)\times\bbX_{12}(z,y)/\sim, 
    \end{equation}
where the equivalence relation identifies the images 
    \begin{equation}
    \bbX_{01}(x,z)\times\bbX_{12}(z,y)\leftarrow\bbX_{01}(x,z)\times\bbX_1(z,z')\times\bbX_{12}(z',y)\rightarrow\bbX_{01}(x,z')\times\bbX_{12}(z',y);
    \end{equation}
of course, we need to smooth the various $\bbX_{02}(x,y)$, cf. \cite[Section 5]{AB24}, also \cite[\S 6]{Bon24}. Of course, it is more in line with the $\infty$-categorical philosophy to think of composition as being only well-defined up to contractible choice, using 2-simplices to be defined shortly; but in this case we happen to have a ``preferred'' such choice.

\begin{defin}\label{defin:framedflowbimodule}
A \emph{spherically framed flow bimodule} is a flow bimodule $\bbX_{01}:\bbX_0\to\bbX_1$ together with the following data. 
\begin{enumerate}
\item A framing on $\bbX_j$.
\item A real virtual bundle $W(x,y)\to\bbX_{01}(x,y)$ for any two $x\in\ob\bbX_0$ and $y\in\ob\bbX_1$.
\item A real virtual bundle $I(x,y)\to\bbX_{01}(x,y)$, together with a trivialization as a stable spherical fibration, for any two $x\in\ob\bbX_0$ and $y\in\ob\bbX_1$.
\item An isomorphism of real virtual bundles 
    \begin{equation}
    T\bbX_{01}(x,y)+V_y+W(x,y)\cong V_x+I(x,y)+W(x,y),
    \end{equation}
for any two $x\in\ob\bbX_0$ and $y\in\ob\bbX_1$, such that the natural associativity diagrams associated to three elements commute.
\end{enumerate}
\end{defin}

\begin{rem}
We may define a \emph{framed flow bimodule} analogously to Definition \ref{defin:framedflowbimodule}, except we require $I(x,y)$ to be trivial as a real virtual bundle.
\end{rem}

\subsection{Higher simplices}
Finally, let $\vec{\calP}\equiv(\calP_0,\ldots,\calP_n)$ be any $(n+1)$-tuple of sets, $n\geq2$. For any two $x\in\calP_j$ and $y\in\calP_{j'}$, $0\leq j\leq j'\leq n$, we may construct a model $\vec{\calP}(x,y)$ as follows. The objects are trees with only bivalent vertices whose:
\begin{itemize}
\item edges are labeled by elements of $\calP_k$, $j\leq k\leq j'$, such that elements on edges appear in ascending order from left to right and the incoming, resp. outgoing leaf is labeled by $x$, resp. $y$; 
\item and vertices are labeled by a subset of $\{k+1,\ldots,k'-1\}$, where the edges to the left, resp. right of a vertex are labeled by an element of $\calP_k$, resp. $\calP_{k'}$.
\end{itemize}
The morphisms are given by collapsing internal edges; the labels on vertices behave as follows. Suppose $T\to T'$ is a morphism, the label of each vertex $v$ of $T$ contains the union of the labels of the vertices $v'$ of $T'$ which are collapsed to it together with any $k$, $j<k<j'$, with the property that all edges of $T'$ labeled by an element of $\calP_k$ are collapsed to $v$. The functor $\codim:\vec{\calP}(x,y)\to\bbZ_{\geq0}$ is given by counting (1) the number of internal edges and (2) the number of $k$, $j<k<j'$, with the property that $k$ could appear as a label of a vertex but does not (i.e., if a vertex is labeled by a subset $S\subset\{k+1,\ldots,k'-1\}$, then the codimension of this tree is increased by the cardinality of the complement $S^c$). For any three $x,y,z\in\calP$, there is a natural functor 
    \begin{equation}
    \vec{\calP}(x,z)\times\vec{\calP}(z,y)\to\vec{\calP}(x,y)
    \end{equation}
given by concatenation. In particular, the collection 
    \begin{equation}
    \big\{\vec{\calP}(x,y)\big\}_{x,y\in\calP}
    \end{equation}
yields a strict 2-category which, by an abuse of notation, we denote by $\vec{\calP}$. There is a natural inclusion of strict 2-categories 
    \begin{equation}
    \partial^j\vec{\calP}\equiv\calP_j\to\vec{\calP}.
    \end{equation} 
It is straightforward to see that, for any $0\leq j<j'\leq n$, there is an equality of strict 2-categories $\partial^{j'}\partial^j\vec{\calP}=\partial^{j-1}\partial^{j'}\vec{\calP}$ which is compatible with the inclusions.

\begin{defin}
A(n elementary\footnote{In the general theory, a flow simplex is built out of elementary flow simplices. This is essentially due to the fact that one considers stratified derived orbifolds with corners whose codimension 1 boundary strata fall into two ``types'' -- ones which are enumerated by ``strong equivalences'' and ones which are enumerated by actual equalities. A Floer-theoretic interpretation for this is that the global Kuranishi charts for Floer-type moduli spaces involved in, say, a homotopy of continuation maps has codimension 1 boundary strata induced by gluing and codimension 1 boundary strata induced by breaking of families of continuation data; the gluing requires stabilizing the obstruction bundles while the breaking of families of continuation data does not. Since we are working with stratified compact smooth manifolds with corners, there are no obstruction bundles, and we may safely ignore this point.}) flow $n$-simplex $\bbX\equiv\bbX_{0\cdots n}$ consists of the following data. 
\begin{enumerate}
\item An object $(n+1)$-tuple $\vec{\calP}\equiv(\calP_0,\ldots,\calP_n)$.
\item For any two $x\in\calP_j$ and $y\in\calP_{j'}$, $0\leq j\leq j'\leq n$, a stratified compact smooth manifold with corners 
    \begin{equation}
    \big(\bbX(x,y),\vec{\calP}(x,y)\big).
    \end{equation}
\item For any three $x\in\calP_j$, $y\in\calP_{j'}$, and $z\in\calP_k$, $0\leq j\leq k\leq j'\leq n$, a morphism of stratified compact smooth manifolds with corners 
    \begin{equation}
    \bbX(x,z)\times\bbX(z,x)\to\bbX(x,y)
    \end{equation}
which is an embedding of a codimension 1 boundary stratum that lifts the functor 
    \begin{equation}
    \vec{\calP}(x,z)\times\vec{\calP}(z,y)\to\vec{\calP}(x,y)
    \end{equation}
such that the natural associativity diagram associated to four elements commutes. 
\item For any two $x\in\calP_j$ and $y\in\calP_{j'}$, $0\leq j\leq j'\leq n$, the codimension 1 boundary strata of $\bbX(x,y)$ are enumerated by (1) morphisms of the above form and (2) the stratified compact submanifolds with corners $\partial^k\bbX(x,y)$, $j<k<j'$; moreover, the natural associativity diagrams intertwining these commutes.
\end{enumerate}
\end{defin}

\begin{defin}\label{defin:framedflowsimplex}
A(n \emph{elementary) spherically framed flow $n$-simplex} is a flow $n$-simplex together with the following data.
\begin{enumerate}
\item A real virtual vector space $V_x$ for any $x\in\calP_j$.
\item A real vector bundle $W(x,y)\to\bbX(x,y)$ for any two $x\in\calP_j$ and $y\in\calP_{j'}$, $0\leq j\leq j'\leq n$.
\item A real virtual bundle $I(x,y)\to\bbX(x,y)$, together with a trivialization as a stable spherical fibration, for any two $x\in\calP_j$ and $y\in\calP_{j'}$, $0\leq j\leq j'\leq n$. 
\item An equivalence 
    \begin{equation}
    T\bbX(x,y)+\underline{\bbR}+V_y+W(x,y)\cong V_x+\underline{\bbR}^{\abs{\{j+1,\ldots,j'\}}}+I(x,y)+W(x,y)
    \end{equation}
for any two $x\in\calP_j$ and $y\in\calP_{j'}$, $0\leq j\leq j'\leq n$, such that the natural associativity diagram associated to three elements commutes.
\end{enumerate}
\end{defin}

\begin{rem}
We may define a(n \emph{elementary) framed flow $n$-simplex} analogously to Definition \ref{defin:framedflowsimplex}, except we require $I(x,y)$ to be trivial as a real virtual bundle.
\end{rem}

\section{Unparameterized Lagrangian Floer theory}
\subsection{Admissible Floer data}
Let $(M,\omega=d\lambda)$ be a Liouville manifold. Recall, this means $M$ is the completion of a Liouville domain $\widehat{M}$ given by gluing on the positive-half of the symplectization: 
    \begin{equation}
    M=\widehat{M}\cup_{\partial\widehat{M}}\Big([1,+\infty)_r\times\partial\widehat{M}\Big).
    \end{equation}
Note, we denote the radial coordinate on the cylindrical end by $r$. Let $H\in C^\infty(I\times M)$, $I\equiv[0,1]$, be a (time-dependent) Hamiltonian; we define $H_t\equiv H(t,\cdot)$. We will denote by: $X_{H_t}$ the Hamiltonian vector field,     
    \begin{equation}
    \omega\big(X_{H_t},\cdot\big)=-dH_t;
    \end{equation}
$\phi^t_{H_t}$ the Hamiltonian flow, 
    \begin{equation}
    \partial_t\phi^t_{H_t}=X_{H_t}\circ\phi^t_{H_t},\;\;\phi^0_{H_t}=\identity_M;
    \end{equation}
and $\chi(L_0,L_1;H)$ the set of time-1 Hamiltonian chords connecting $L_0$ to $L_1$, 
    \begin{equation}
    \chi(L_0,L_1;H)\equiv\big\{x\in C^\infty(I,M):\dot{x}(t)=X_{H_t}\big(x(t)\big),x(i)\in L_i\big\}.
    \end{equation}
Recall, $H_t$ is called non-degenerate if $\phi^1_{H_t}(L_0)$ intersects $L_1$ transversely; this is a generic condition. Moreover, we will assume $H_t$ is linear at infinity of slope $\tau\in\bbR$, i.e., 
    \begin{equation}
    H_t(r,p)=\tau\cdot r,\;\;r\gg1.
    \end{equation}
(Note, we choose $\tau$ such that it is not the length of a Reeb chord connecting $\partial L_0$ to $\partial L_1$.) We call Hamiltonians satisfying the two previous conditions \emph{admissible (of slope $\tau$)}. We now fix $\tau$ and denote the space of admissible Hamiltonians (of slope $\tau$) by $\calH$. The following result is standard.

\begin{lem}
$\calH$ is non-empty and contractible. 
\end{lem}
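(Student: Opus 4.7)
The plan is to prove non-emptiness via a standard Sard--Smale transversality argument, and contractibility by exploiting the affine (in particular convex) structure of the ambient space of Hamiltonians linear at infinity of slope $\tau$, together with openness and density of the non-degeneracy condition.

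For non-emptiness, I would first construct any smooth $H_0\in C^\infty(I\times M)$ which agrees with $\tau\cdot r$ on the cylindrical end; such an $H_0$ exists by partition of unity (extend the function $\tau\cdot r$, defined for $r\gg1$, over $\widehat{M}$ smoothly). The condition that $\tau$ is not the length of a Reeb chord between $\partial L_0$ and $\partial L_1$ ensures, via a standard maximum principle / action estimate argument on the cylindrical end, that every time-$1$ Hamiltonian chord of any admissible Hamiltonian is contained in a fixed compact subset $K\subset M$. One then considers compactly supported perturbations $H_0+h$, $h\in C^\infty_c(I\times\mathrm{int}(\widehat{M}))$, and the universal section $F(x,h)=\phi^1_{H_0+h}(x(0))-x(1)$ (appropriately interpreted) whose zeros parameterize Hamiltonian chords of $H_0+h$. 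A standard argument shows that the parameterized differential is surjective at every zero, so by Sard--Smale a residual set of perturbations $h$ yields a transverse, hence non-degenerate, Hamiltonian.

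For contractibility, let $\calH_\tau$ denote the space of all $H\in C^\infty(I\times M)$ with $H_t(r,p)=\tau\cdot r$ for $r\gg1$. This is an affine subspace of $C^\infty(I\times M)$, hence convex and in particular contractible, and $\calH\subset\calH_\tau$ is the subspace cut out by the non-degeneracy condition. It suffices to show $\calH$ has trivial homotopy groups in every degree: given a continuous family $\varphi:S^k\to\calH$, extend it via the straight-line homotopy inside the convex set $\calH_\tau$ to a continuous map $\widetilde{\varphi}:D^{k+1}\to\calH_\tau$, and then perturb $\widetilde{\varphi}$, rel.\ boundary, to land in $\calH$. This last perturbation step is again a parameterized Sard--Smale argument, applied to the universal moduli space of Hamiltonian chords fibered over $D^{k+1}$ and an auxiliary perturbation parameter supported in $C^\infty_c(I\times\mathrm{int}(\widehat{M}))$.

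The main obstacle is really the parameterized transversality step: one must verify that the parameterized universal linearization is surjective, which reduces, at a chord $x$ for $H$, to showing that compactly supported variations of $H$ span the cokernel of the linearized chord equation along $x$. This is a standard but non-trivial bump function construction, using that $x$ lies in the interior of $\widehat{M}$ by the Reeb-chord avoidance hypothesis on $\tau$ and that the two endpoints lie on distinct Lagrangians (so one may deform $H$ near a point of $x$ not on $L_0\cup L_1$). Once this point is established, contractibility follows immediately from the convex extension argument above.
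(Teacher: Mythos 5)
The paper supplies no proof of this lemma (it is labelled ``standard''), so your argument can only be judged on its own merits. The non-emptiness half is fine: extending $\tau\cdot r$ over $\widehat{M}$ and then achieving non-degeneracy by a compactly supported Sard--Smale perturbation is the standard argument, and the Reeb-chord avoidance on $\tau$ does confine chords to a compact region as you say.

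The contractibility half, however, has a genuine gap at the step ``perturb $\widetilde{\varphi}$ rel boundary to land in $\calH$.'' Parameterized Sard--Smale makes the \emph{universal} moduli space of chords over $D^{k+1}$ transverse; it does \emph{not} give transversality fiberwise, i.e.\ it does not make every individual $\widetilde{\varphi}(s)$ non-degenerate. Indeed it cannot: failure of non-degeneracy (a birth--death chord) is a codimension-$1$ phenomenon in $\calH_\tau$, so a generic $(k+1)$-parameter family with $k\geq 0$ will cross the degeneracy locus. In fact $\calH$ is not even connected, since the (finite, by the maximum principle) integer count $\#\chi(L_0,L_1;H)$ is a locally constant function on $\calH$ that changes by $2$ across a birth--death wall, and can be made arbitrarily large by adding small wiggles to $H$. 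Thus the statement is false in the literal sense your proof addresses, and no perturbation scheme can close this gap. What is true, and what the paper actually relies on in the sequel, is the convexity (hence contractibility) of the ambient affine space $\calH_\tau$ together with the density of the non-degeneracy condition; the precise form of ``all choices are equivalent'' used later is the acyclicity of the Kan complex $\calH\calJ^{\reg}$, whose $1$-simplices are Floer continuation data through $\calH_\tau$, not paths inside $\calH$ itself. Your argument should be reorganized along those lines rather than attempting to contract $\calH$ directly.
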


Now, let $J\equiv\{J_t\}_{t\in[0,1]}$ be a (time-dependent $\omega$-compatible) almost complex structure. We will assume $J$ is of contact type at infinity, i.e., each $J_t$ (1) preserves the contact-type distribution $\ker\big(\lambda\vert_{\partial\widehat{M}}\big)$ and (2) takes the Liouville vector field of $(M,\omega)$ to the Reeb vector field of $\big(\partial\widehat{M},\lambda\vert_{\partial\widehat{M}}\big)$. We call these almost complex structures \emph{admissible}, and we denote the space of such almost complex structures by $\calJ$. The following result is standard.

\begin{lem}
$\calJ$ is non-empty and contractible.
\end{lem}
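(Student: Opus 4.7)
The plan is to reduce both claims to the classical fact that the space $\calJ(V, \omega)$ of $\omega$-compatible complex structures on a symplectic vector space $(V, \omega)$ is non-empty and contractible, being diffeomorphic to the Siegel upper half-space $Sp(2n)/U(n)$. Since elements of $\calJ$ are time-parameterized families, i.e., elements of the mapping space $\map(I, \calJ_{\mathrm{unp}})$ where $\calJ_{\mathrm{unp}}$ denotes the space of (single, not time-dependent) admissible almost complex structures, and mapping from a compact space into a contractible space preserves contractibility, it suffices to prove both statements for $\calJ_{\mathrm{unp}}$.

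For non-emptiness I would argue as follows. On the cylindrical end $[1, \infty) \times \partial \widehat{M}$, the contact-type axioms --- preservation of $\xi = \ker(\lambda\vert_{\partial \widehat{M}})$ and $J(r \partial_r) = R$, where $R$ is the Reeb vector field --- determine $J$ on the $2$-plane spanned by $r \partial_r$ and $R$, and reduce the remaining freedom to a $d\alpha$-compatible complex structure on the symplectic vector bundle $(\xi, d\alpha) \to \partial \widehat{M}$, with $\alpha \equiv \lambda\vert_{\partial \widehat{M}}$. Such a structure exists by the classical fact applied fiberwise and globalized by a standard section argument (the bundle of fiberwise such structures has contractible fibers). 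I would then extend inward by picking any $\omega$-compatible almost complex structure on $\widehat{M}$ and gluing via polar decomposition: convex-combine the associated Riemannian metrics using a cutoff in $r$, then apply the canonical assignment $(g, \omega) \mapsto J$ sending a Riemannian metric and symplectic form to the unique $\omega$-compatible $J$ with $g(\cdot, \cdot) = \omega(\cdot, J P \cdot)$ for $P$ positive symmetric.

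For contractibility, the strategy is to view $\calJ_{\mathrm{unp}}$ as the section space of a bundle over $M$ whose fibers are either the full space of $\omega$-compatible complex structures on $T_pM$ (for $p$ in the interior of $\widehat{M}$) or the subspace of such structures satisfying the contact-type condition (for $p$ on the cylindrical end); both fiber types are contractible, so the section space is contractible by obstruction theory. Equivalently, one obtains an explicit deformation retraction onto a chosen basepoint $J^0 \in \calJ_{\mathrm{unp}}$ via polar decomposition applied to the family of Riemannian metrics $g^s := (1-s) g_{J^0} + s g_J$ for $J \in \calJ_{\mathrm{unp}}$.

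The main technical point to verify is that the polar-decomposition construction preserves the contact-type condition on the cylindrical end. This should be straightforward: the contact-type condition translates to $g_J$ making $\xi$ orthogonal to $\mathrm{span}(r\partial_r, R)$ with a prescribed form on the $2$-plane, a block structure that is stable under both convex combinations of metrics and polar decomposition, so the resulting path of almost complex structures remains in $\calJ_{\mathrm{unp}}$ throughout.
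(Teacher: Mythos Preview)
Your argument is correct and is precisely the standard one; the paper itself does not supply a proof, merely flagging the result as standard. So you have filled in exactly what the authors elided: the fiberwise contractibility of compatible complex structures, the block decomposition on the cylindrical end, and the polar-decomposition interpolation. The one point worth double-checking in your write-up is the verification that the metric $g_J$ restricted to $\mathrm{span}(r\partial_r, R)$ is genuinely \emph{independent} of the contact-type $J$ (not merely of block form), so that convex combination and polar decomposition reproduce exactly the normalization $J(r\partial_r) = R$ rather than some scalar multiple; this follows from $g_J(r\partial_r, r\partial_r) = \omega(r\partial_r, R)$ and $g_J(R, R) = \omega(R, -r\partial_r)$ both equaling $r$, together with $g_J(r\partial_r, R) = 0$, but it is worth stating explicitly.
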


\subsection{Lagrangian Floer homotopy type}
We now assume we are working under Assumption \ref{assu:main}. Let $(H,J)\in\calH\times\calJ$. Given any two $x,y\in\chi(L_0,L_1;H)$, we consider the moduli space $\widetilde{\scrF}^{H,J}(x,y)$ of Floer trajectories connecting $x$ to $y$, i.e., strips $u:Z\equiv\bbR_s\times I_t\to M$ satisfying 
    \begin{equation}
    \begin{cases}
    \partial_su+J_t\big(\partial_tu-X_{H_t}(u)\big)=0 \\
    u(s,i)\in L_i \\
    \lim_{s\to-\infty}u(s,t)=x(t) \\
    \lim_{s\to+\infty}u(s,t)=y(t)
    \end{cases}
    .
    \end{equation}
Recall, $\widetilde{\scrF}^{H,J}(x,y)$ is generically a smooth manifold of dimension $\mu(x)-\mu(y)$, where $\mu(\cdot)$ is the Maslov index.\footnote{Recall, Assumption \ref{assu:main} implies, in particular, that $L_i$ is graded in the sense of Seidel \cite{Sei00}.} This occurs when each member of the family of linearized operators (with totally real boundary conditions) 
    \begin{equation}
    D\overline{\partial}_{H,J}\equiv\big\{D(\overline{\partial}_{H,J})_u:W^{1,2}(Z;u^*TM,u^*TL_i)\to L^2(Z,u^*TM)\big\},
    \end{equation}
given by linearizing the Floer equation, is a surjective Fredholm operator. In particular, $T\widetilde{\scrF}^{H,J}(x,y)$ is classified by the index bundle $\ind D\overline{\partial}_{H,J}$. We call pairs $(H,J)$ with this property \emph{regular}.

When $x\neq y$, there is a smooth proper $\bbR$-action on $\widetilde{\scrF}^{H,J}(x,y)$ given by time-shift in the $s$-coordinate, and we denote the $\bbR$-quotient by $\scrF^{H,J}(x,y)$. Moreover, there is a natural Gromov-compactification $\bbF^{H,J}(x,y)$ of $\scrF^{H,J}(x,y)$ given by allowing breakings at time-1 Hamiltonian chords. It was shown in \cite[Section 6]{Lar21} that $\bbF^{H,J}(x,y)$ is a stratified compact smooth manifold with corners whose stratification is given by breakings at time-1 Hamiltonian chords. Moreover, it was shown in Section 7 of \emph{loc. cit.} how a stable framing of the background symplectic manifold, together with a null-homotopy of the stable Gauss map of each Lagrangian, implies there are coherent stable framings on the compactified moduli spaces of Floer trajectories. This has also been proven when the background symplectic manifold admits a stable $\bbR$-polarization and each Lagrangian has a framed brane structure, cf. \cite{ADP24,Bla24,PS24a,PS24b}. We will now prove the following folklore statement. 

\begin{prop}\label{prop:unparamterizedframing}
Suppose Assumption \ref{assu:main}, then the compactified moduli spaces of Floer trajectories admit coherent $\bbS$-orientations.
\end{prop}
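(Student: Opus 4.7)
The plan is to produce, for every pair of time-$1$ Hamiltonian chords $x, y \in \chi(L_0, L_1; H)$, the framing data required by Definition \ref{defin:framedflowcategory}: virtual vector spaces $V_x$ attached to chords, real vector bundles $W(x,y) \to \bbF^{H,J}(x,y)$, virtual bundles $I(x,y) \to \bbF^{H,J}(x,y)$ each with a trivialization as a stable spherical fibration, together with isomorphisms
\[
T\bbF^{H,J}(x,y) + \underline{\bbR} + V_y + W(x,y) \cong V_x + I(x,y) + W(x,y)
\]
compatible with the concatenation inclusions $\bbF^{H,J}(x,z) \times \bbF^{H,J}(z,y) \hookrightarrow \bbF^{H,J}(x,y)$.

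First I would set $V_x$ to be the index of the linearized Cauchy--Riemann operator on a right half-strip $[0,\infty) \times I \to M$ with Lagrangian boundary on $(L_0, L_1)$ and asymptotic datum $x$, stabilized (by a suitable auxiliary choice of $W(x,y)$) to a genuine finite-dimensional virtual vector space. Linear gluing of CR operators---half-strips onto a finite strip, or two finite strips at a broken breaking---then yields the displayed isomorphism, in which $I(x,y) \to \bbF^{H,J}(x,y)$ is the ``Maslov index bundle'' arising from the family of totally real boundary value problems determined by parallel transport of the Lagrangian tangent spaces along each Floer strip; the $\underline{\bbR}$ summand records the time-translation symmetry.

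The central step is to exhibit a canonical stable spherical trivialization of $I(x,y)$ from the Maslov data $\theta$. This is where Bonciocat's model of the Bott isomorphism $U/O \xrightarrow{\sim} B(\Omega^\infty KO)$ from \cite{Bon25} is essential: that model is constructed precisely so that the $KO$-theoretic data attached via \eqref{eq:umodo} to a family of paths of Lagrangian subspaces agrees with the index of the associated family of totally real CR problems. Evaluating along the strip-to-path map $\bbF^{H,J}(x,y) \to \mathrm{Paths}(\scrP(L_0, L_1))$ and invoking the standard loop-space adjunction, the classifying map of $I(x,y)$ into $B GL_1 \bbS$ is identified with the pullback of the once-looped composition \eqref{eq:assu}. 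The null-homotopy $\theta$ loops down to a null-homotopy of this classifying map, which is exactly the desired stable spherical trivialization.

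The main obstacle I anticipate is the coherence check: the trivializations of $I(x,y)$ must intertwine the additive gluing isomorphisms $I(x,z) + I(z,y) \cong I(x,y)\vert_{\bbF^{H,J}(x,z) \times \bbF^{H,J}(z,y)}$ over every triple of chords, and these intertwiners must themselves satisfy the higher associativity constraints of Definition \ref{defin:framedflowcategory}. Because all of the relevant framing data is pulled back from $\scrP(L_0, L_1)$---where concatenation of paths and the $U/O$-classifying map interact strictly associatively---the required coherence should follow from the strict associativity built into the Bonciocat Bott model, modulo careful bookkeeping for the stabilizing bundles $W(x,y)$ used to realize the $V_x$ genuinely. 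This is essentially the step I expect to require the most technical care.
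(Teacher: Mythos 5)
Your proposal follows essentially the same route as the paper. In the paper's language, your ``index of a half-strip CR operator'' attached to a chord $x$ is the Floer abstract cap $T_{\frakF,x}$, your ``Maslov index bundle'' is the virtual rank $0$ difference $D\overline{\partial}_{\mathrm{aux}} = D\overline{\partial}_{H,J} + T_{\frakF,y} - T_{\frakF,x}$, and the identification of its $J$-homomorphism image with the pullback of the once-looped \eqref{eq:assu} along $\bbF^{H,J}(x,y)\to\Omega\scrP(L_0,L_1)$ is exactly the appeal to \cite{Bon25}. Looping the null-homotopy $\theta$ then produces the trivialization of $I(x,y)$ as a stable spherical fibration, just as you describe.

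The one place where you are slightly off is the coherence argument. Concatenation of loops in $\scrP(L_0,L_1)$ is \emph{not} strictly associative, so coherence does not simply ``come for free'' from strictness; the correct statement is that the Bott isomorphism intertwines the $\bbE_\infty$-structure on $\Omega^\infty KO$ given by \emph{addition} of index bundles with the $\bbE_1$-structure on $\Omega(U/O)$ given by loop concatenation, and these two monoid structures are reconciled via the Eckmann--Hilton argument. That, together with the canonical identification $\ind T_{\frakF,x} - \ind T_{\frakF,x}\cong 0$ and the gluing-compatibility of the index bundles, is what supplies the higher associativity of Definition \ref{defin:framedflowcategory}. You correctly anticipated this as the delicate step, but the mechanism is Eckmann--Hilton rather than strict associativity of path concatenation.
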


\begin{proof}
Given any two $x,y\in\chi(L_0,L_1;H)$, $T\widetilde{\scrF}^{H,J}(x,y)$ is classified by a map
    \begin{equation}
    D\overline{\partial}_{H,J}:\widetilde{\scrF}^{H,J}(x,y)\to \bbZ\times BO
    \end{equation}
which takes a Floer trajectory $u$ connecting $x$ to $y$ to $\ind D(\overline{\partial}_{H,J})_u$. It was shown in \cite[Section 7]{Lar21} how $T\bbF^{H,J}(x,y)$, coming from the stratified compact smooth manifold with corner structure induced by gluing, is related to (an extension to the Gromov-compactification of) the smooth vector bundle $\ind D\overline{\partial}_{H,J}$ (also, cf. \cite[Sections 6.4, 6.5]{PS24b}). We have the basic relation 
    \begin{equation}\label{eq:isomorphisms}
    T\scrF^{H,J}(x,y)\oplus\underline{\bbR}\cong\ind D\overline{\partial}_{H,J}.
    \end{equation}
In particular, the compatibility of: (1) the natural decomposition of $T\bbF^{H,J}(x,y)$ over a stratum into the direct sum of the tangent bundles of that stratum together with the collar directions, (2) the natural decomposition of $\ind D\overline{\partial}_{H,J}$ over a stratum into the direct sum of the index bundles of the families of linearized operators associated to that stratum, and (3) the various isomorphism of the form \eqref{eq:isomorphisms}; was established. The upshot is that $T\bbF^{H,J}(x,y)\oplus\underline{\bbR}$ is classified by a map 
    \begin{equation}
    D\overline{\partial}_{H,J}:\bbF^{H,J}(x,y)\to\bbZ\times BO
    \end{equation}
related to the index bundles of the various families of linearized operators which is manifestly compatible with gluing. Consider $[(u_0,\ldots,u_r)]\in\bbF^{H,J}(x,y)$. We have the asymptotic operator $D(\overline{\partial}_{H,J})_x$ resp. $D(\overline{\partial}_{H,J})_y$ of $D(\overline{\partial}_{H,J})_{u_0}$ resp. $D(\overline{\partial}_{H,J})_{u_r}$  at $-\infty$ resp. $+\infty$. Note, these asymptotic operators are invertible operators
    \begin{align}
    W^{1,2}(I;x^*TM,x^*TL_i)&\to L^2(I;x^*TM), \\
    W^{1,2}(I;y^*TM,y^*TL_i)&\to L^2(I;y^*TM),
    \end{align}
respectively, which are independent of $[(u_0,\ldots,u_r)]$. As described in \cite[Subsection 5.4]{Bla24}, we may construct a \emph{Floer abstract cap} for $x$, i.e., a canonical index $\mu(x)$ Fredholm operator
    \begin{equation}
    T_{\frakF,x}:W^{1,2}(Z;x^*TM,x^*TL_i)\to L^2(Z;x^*TM)
    \end{equation}
with asymptotic operator $D(\overline{\partial}_{H,J})_x$ resp. $\partial_t+\kappa\cdot\identity$ at $-\infty$ resp. $+\infty$, where $\kappa\in\bbR_{>0}$ is sufficiently small. Note, we have a canonical isomorphism $\ind T_{\frakF,x}\cong\bbR^{\mu(x)}$. In particular, we may consider the map
    \begin{equation}
    D\overline{\partial}_\mathrm{aux}:\bbF^{H,J}(x,y)\to BO
    \end{equation}
given by 
    \begin{equation}
    D\overline{\partial}_{H,J}+T_{\frakF,y}-T_{\frakF,x},
    \end{equation}
where $T_{\frakF,x}$ resp. $T_{\frakF,y}$ is the obvious constant map $\bbF^{H,J}(x,y)\to \bbZ\times BO$; note, this map classifies a virtual rank 0 real virtual bundle. We have the following composition:
    \begin{equation}\label{eq:coherentsaux1}
    \bbF^{H,J}(x,y)\xrightarrow{D\overline{\partial}_\mathrm{aux}} BO\xrightarrow{BJ}BGL_1\bbS.
    \end{equation}
On the other hand, we have an obvious map 
    \begin{equation}
    \bbF^{H,J}(x,y)\to\Omega_{x,y}\scrP(L_0,L_1)\simeq\Omega\scrP(L_0,L_1),
    \end{equation}
where $\Omega_{x,y}\scrP(L_0,L_1)$ denotes the space of paths in $\scrP(L_0,L_1)$ connecting $x$ to $y$, which is well-defined up to homotopy. We may consider the composition
    \begin{equation}\label{eq:coherentsaux2}
    \bbF^{H,J}(x,y)\to\Omega\scrP(L_0,L_1)\to\Omega(U/O)\xrightarrow{\sim}\Omega^\infty KO\to BO\xrightarrow{BJ}BGL_1\bbS,
    \end{equation}
where the composition of the second, third, fourth, and fifth map is the looping of \eqref{eq:assu}. By \cite[\S 2.3]{Bon25}, \eqref{eq:coherentsaux2} is equivalent to \eqref{eq:coherentsaux1}. Finally, by Assumption \ref{assu:main}, we have a choice of null-homotopy of \eqref{eq:coherentsaux2} induced by $\theta$; this is equivalent to saying we have an induced choice of virtual rank 0 real virtual bundle $I_\theta(x,y)\to\bbF^{H,J}(x,y)$, together with a trivialization as a stable spherical fibration, and a choice of isomorphism of real virtual bundles
    \begin{equation}
    T\bbF^{H,J}(x,y)+\underline{\bbR}+\underline{\bbR}^{\mu(y)}\cong\underline{\bbR}^{\mu(x)}+I_\theta(x,y).
    \end{equation}
Moreover, the natural associativity diagram associated to three elements commutes (i.e., we have compatibility with gluing) by (1) canonical isomorphisms of the form 
    \begin{equation}
    \ind T_{\frakF,x}-\ind T_{\frakF,x}\cong0,
    \end{equation}
(2) the fact that (the extension to the Gromov-compactification of) $\ind D\overline{\partial}_{H,J}$ is compatible with gluing, and (3) the fact that the Bott isomorphism intertwines the $\bbE_\infty$-structure given by addition of index bundles and the $\bbE_1$-structure given by loop concatenation via the Eckmann-Hilton argument.
\end{proof}

\begin{defin}
The \emph{(spherically framed) Lagrangian Floer homotopy type} of $(L_0,L_1)$, associated to a choice of regular admissible pair $(H,J)$ and null-homotopy $\theta$ of \eqref{eq:assu}, is the $\frakR^\sfr$-module spectrum $\frakF^{H,J,\theta}$ associated to the spherically framed flow category $\bbF^{H,J,\theta}$ whose objects are time-1 Hamiltonian chords of $H$ connecting $L_0$ to $L_1$, morphism spaces are the compactified moduli spaces of Floer trajectories connecting time-1 Hamiltonian chords, and spherical framing is induced by Proposition \ref{prop:unparamterizedframing}.
\end{defin}

\section{Parameterized Lagrangian Floer theory}
\subsection{The parameter space of Maslov data}
Recall, we denote by $\mas$ the space of Maslov data and by $\mas^\theta$ the pointed space $\mas,\theta\big)$. The following lemma is standard algebraic topology.

\begin{lem}\label{lem:hofib}
A choice of $\theta$ is equivalent to a choice of (1) a lift $F_\theta$ to the homotopy fiber of $B^2J$ and (2) a homotopy witnessing homotopy commutativity of the diagram
    \begin{equation}
    \begin{tikzcd}
    & \mathrm{hofib}B^2J\arrow[d] \\
    \scrP(L_0,L_1)\arrow[r]\arrow[ur,"F_\theta"] & B^2O
    \end{tikzcd}
    .
    \end{equation}
\end{lem}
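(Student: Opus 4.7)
The plan is to recognize the lemma as a direct application of the universal property of the homotopy fiber. Recall that $\mathrm{hofib}(B^2J)$ is defined (up to homotopy) as the homotopy pullback
    \begin{equation}
    \begin{tikzcd}
    \mathrm{hofib}(B^2J)\arrow[r]\arrow[d] & *\arrow[d] \\
    B^2O\arrow[r,"B^2J"] & B^2GL_1\bbS
    \end{tikzcd}
    ,
    \end{equation}
so by the universal property of homotopy pullbacks, specifying a map $F_\theta\colon\scrP(L_0,L_1)\to\mathrm{hofib}(B^2J)$ is the same, up to contractible choice, as specifying: a map $g\colon\scrP(L_0,L_1)\to B^2O$, the (essentially unique) map $\scrP(L_0,L_1)\to *$, and a homotopy between their images in $B^2GL_1\bbS$ — that is, a null-homotopy of $B^2J\circ g$.

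The first step is to fix $g$ as the canonical map $\scrP(L_0,L_1)\to B^2O$ given by the composition of all maps in \eqref{eq:assu} preceding $B^2J$. Then the ``homotopy witnessing homotopy commutativity'' of the triangle in the statement is precisely the datum of a homotopy between $g$ and the composition $\scrP(L_0,L_1)\xrightarrow{F_\theta}\mathrm{hofib}(B^2J)\to B^2O$, so the data in (1) and (2) together are exactly a lift of $g$ to the homotopy fiber in the sense of the universal property just recalled. Under that identification, the null-homotopy of $B^2J\circ g$ extracted from $(F_\theta,\text{commutativity homotopy})$ is tautologically a null-homotopy of \eqref{eq:assu}, which is the definition of $\theta$.

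The verification that this assignment is a homotopy equivalence (i.e., that it has a homotopy inverse) is formal: given $\theta$, one produces $F_\theta$ as the induced map to the homotopy pullback, and the two round-trip compositions are canonically homotopic by the uniqueness clause of the universal property. Since every step is an instance of standard $\infty$-categorical manipulations, there is no serious obstacle; the only thing to be careful about is to work throughout with the homotopy-invariant models (pointed spaces and the path-space model of $\mathrm{hofib}$, or equivalently the pullback taken in the $\infty$-category of spaces) so that the universal property applies without further cofibrancy hypotheses.
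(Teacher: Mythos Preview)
Your proposal is correct and is precisely the standard argument the paper has in mind: the paper gives no proof of this lemma beyond the remark that it ``is standard algebraic topology,'' and your unpacking of the universal property of the homotopy fiber is exactly that standard argument.
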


We are particularly interested in the case of plumbings of cotangent bundles; we recall the construction now.

\begin{example}\label{example:plumbing}
Let $Q_i$ be two smooth manifolds together with an embedding of a closed smooth manifold $C\to Q_i$ such that the normal bundle $N_CQ_i\to C$ is isomorphic to a fixed real vector bundle $NC\to C$, where we think of $N_CQ_i$ as a tubular neighborhood of $C$ in $Q_i$ via picking a suitable Riemannian metric. By Weinstein's theorem, there exists a symplectomorphism $\chi:U\to V$,
where $U\subset TQ_i$ is a neighborhood of $C$ and $V\subset\tau^*(NC\otimes_\bbR\underline{\bbC})$ is a neighborhood of the 0-section (here, $\tau:T^*C\to C$ is the projection). We may choose $\chi$ so that 
    \begin{equation}
    \chi(U\cap Q_i)=N_CQ_i.
    \end{equation}
By construction, there are two natural totally real subbundles of $NC\otimes_\bbR\underline{\bbC}$; namely, $NC$ and $\sqrt{-1}NC$. We may build a new Liouville manifold 
    \begin{equation}
    \big(M\equiv T^*Q_0\cup_{NC}T^*Q_1,\omega=d\lambda\big)
    \end{equation}
by gluing $T^*Q_0$ to $T^*Q_1$ along $NC$ after complex multiplication. Note, the Liouville form $\lambda$ agrees with the standard Liouville form on $T^*Q_i$ away from $C$ and agrees with the direct sum of the standard Liouville forms on $T^*C$ and $NC\otimes_\bbR\underline{\bbC}$ near $C$. Moreover, there exist two Lagrangians $L_i\subset M$, where $L_i$ is diffeomorphic to $Q_i$, such that $(L_0,L_1)$ intersects cleanly along $C$.
\end{example}

While $\mas^\theta$ may be quite complicated in general, in certain geometric situations it may admit maps from more manageable spaces. Let $c_0\in C$ be a fixed choice of basepoint; recall, we denote by $\map_*\big(C,BGL_1\bbS\big)$ the space of pointed maps $C\to BGL_1\bbS$ ($GL_1\bbS$ is pointed at the identity), i.e., this is the space of stable spherical fibrations $\Phi:C\to BGL_1\bbS$ which have a fixed choice of trivialization $\Phi\vert_{c_0}\simeq\bbS$. Consider the following result.

\begin{lem}\label{lem:BGL}
Suppose $M$ is the plumbing of two cotangent bundles $T^*Q_i$ along $NC$, then we have a natural map of pointed spaces
    \begin{equation}
    \map_*(C,BGL_1\bbS)\to\mas^\theta,
    \end{equation}
where $\map_*(C,BGL_1\bbS)$ is pointed at the trivial map.
\end{lem}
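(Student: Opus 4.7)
The plan is to construct the desired map as the composition
\begin{equation*}
\map_*(C, BGL_1\bbS) \xrightarrow{\widetilde{(-)}} \map_*(\Omega\Sigma C, BGL_1\bbS) \xrightarrow{\alpha^*} \map_*(\scrP(L_0,L_1), BGL_1\bbS) \xrightarrow{\theta \cdot (-)} \mas^\theta,
\end{equation*}
where: $\alpha: \scrP(L_0,L_1) \to \Omega\Sigma C$ is a natural pointed map built from the plumbing geometry; the first arrow sends $\Phi$ to $\widetilde\Phi := \Omega\overline\Phi$, with $\overline\Phi: \Sigma C \to B^2GL_1\bbS$ the suspension-loop adjoint of $\Phi: C \to BGL_1\bbS = \Omega B^2GL_1\bbS$ (so $\widetilde\Phi$ is the canonical $\bbE_1$-extension, agreeing with the James extension along $C\hookrightarrow \Omega\Sigma C$); and the last arrow uses the torsor structure on the space of null-homotopies of $\scrP(L_0, L_1)\to B^2GL_1\bbS$ to identify $\mas^\theta$ with $\map_*(\scrP(L_0,L_1), \Omega B^2GL_1\bbS) = \map_*(\scrP(L_0,L_1), BGL_1\bbS)$ via the basepoint $\theta$.

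The geometric content lies in constructing $\alpha$. The Liouville retraction on each $T^*Q_i$, together with the fiberwise scaling retraction of the plumbing region $NC\otimes_\bbR\underline{\bbC}$ onto the union of its two totally real subbundles $NC \cup_C \sqrt{-1}NC$, yields a deformation retraction $M \simeq L_0 \cup_C L_1$ onto the pushout of $Q_0 \leftarrow C \to Q_1$. Consequently, the homotopy pullback model $\scrP(L_0, L_1) \simeq L_0 \times^h_M L_1$ is equivalent to $L_0 \times^h_{L_0 \cup_C L_1} L_1$. Applying the collapse maps $L_i \to \ast$ sends the pushout $L_0 \cup_C L_1$ to $\ast \cup_C \ast = \Sigma C$ (unreduced suspension) and so induces
\begin{equation*}
\alpha: \scrP(L_0, L_1) \simeq L_0 \times^h_{L_0 \cup_C L_1} L_1 \to \ast \times^h_{\Sigma C} \ast \simeq \Omega\Sigma C,
\end{equation*}
the last equivalence being canonical once we fix the meridian $\{c_0\} \times I \subset \Sigma C$. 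This $\alpha$ is pointed when $\scrP(L_0, L_1)$ is based at the constant path at $c_0 \in C$.

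The remainder is essentially formal: precomposing $\widetilde\Phi$ with $\alpha$ yields $\psi_\Phi: \scrP(L_0,L_1) \to BGL_1\bbS$, and the torsor equivalence converts $\psi_\Phi$ into a new null-homotopy by concatenating $\theta$ with $\psi_\Phi$ viewed as a family of loops in $B^2GL_1\bbS$. The trivial map $\Phi$ yields the constant $\psi_\Phi$ and hence fixes $\theta$, so the resulting map is pointed, and naturality in $\Phi$ is clear from the construction. The main obstacle I anticipate is ensuring that the deformation retraction $M \simeq L_0 \cup_C L_1$ and the collapse $L_0 \cup_C L_1 \to \Sigma C$ can be made sufficiently functorial to define $\alpha$ as an actual map of spaces (not merely of homotopy types); I expect this to be handled uniformly by working throughout in the $\infty$-category of pointed spaces, where each step is characterized by a universal property and the required compatibilities hold up to contractible choice.
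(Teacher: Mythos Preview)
Your proposal is correct and follows essentially the same three-step strategy as the paper: construct a pointed map $\scrP(L_0,L_1)\to\Omega\Sigma C$, use the James/$\bbE_1$-extension to pass from $\map_*(C,BGL_1\bbS)$ to $\map_*(\Omega\Sigma C,BGL_1\bbS)$, and then translate the basepoint $\theta$ by the resulting map. The only cosmetic differences are that the paper builds the map $\scrP(L_0,L_1)\to\Omega\Sigma C$ explicitly by attaching cones $C(L_i)$ to $M$ and recognizing $M\cup(C(L_0)\vee C(L_1))\simeq\Sigma C$ (which is exactly your homotopy-pullback-plus-collapse argument in coordinates), and that the paper phrases the final translation step via Lemma~\ref{lem:hofib} as $F_\theta\mapsto F_\theta+\widetilde{h}$ on lifts to $\mathrm{hofib}\,B^2J$ rather than invoking the torsor structure on null-homotopies directly.
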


\begin{proof}
First, let $C(L_i)$ be the reduced cone over the inclusion $L_i\to M$. There is a natural pointed map (well-defined up to homotopy) 
    \begin{equation}\label{eq:maptoloops}
    \scrP(L_0,L_1)\to\Omega\Big(M\cup\big(C(L_0)\vee C(L_1)\big)\Big),
    \end{equation}
where $\scrP(L_0,L_1)$ is pointed at the constant path at $c_0$ and $\Omega\Big(M\cup\big(C(L_0)\vee C(L_1)\big)\Big)$ is pointed at the constant loop at $c_0$, defined as follows. We take a path connecting $L_0$ to $L_1$ and connect the starting resp. ending point of this path to $c_0$ via a path in $C(L_0)$ resp. $C(L_1)$. In fact, since $M$ retracts onto its isotropic skeleton $L_0\cup_C L_1$, it is straightforward to see a homotopy equivalence 
    \begin{equation}\label{eq:sigmac}
    M\cup\big(C(L_0)\vee C(L_1)\big)\simeq\Sigma C.
    \end{equation}
Summarizing \eqref{eq:maptoloops} and \eqref{eq:sigmac}, we get a map
    \begin{equation}\label{eq:maptofree}
        \scrP(L_0,L_1) \to \Omega\Sigma C
    \end{equation}
from the pathspace to the free $\mathbb E_1$-group on $C$ (i.e., the codomain is the James construction). Since $BGL_1(\mathbb S)$ is itself an $\mathbb E_1$-group (in fact, it is an $\mathbb E_\infty$-group), this induces a map
    \begin{equation}
    \map_*(C, BGL_1\bbS)\to\map_*(\Omega\Sigma C, BGL_1\mathbb S)\to\map_*(\scrP(L_0,L_1),BGL_1\mathbb S),
    \end{equation}
where the first map is induced by the James construction. Alternatively, we may describe this composite by reinterpreting the $\mathbb E_1$-group structure as being a 1-fold loopspace, and then using (1) suspension-loop adjunction, (2) looping, and (3) precomposition:
    \begin{equation}
    \begin{tikzcd}
    \map_*(C,BGL_1\bbS)\cong\map_*(\Sigma C,B^2GL_1\bbS)\arrow[d] \\
    \map_*(\Omega\Sigma C,\Omega B^2GL_1\bbS)\cong\map_*(\Omega\Sigma C,BGL_1\bbS)\arrow[d] \\
    \map_*\big(\scrP(L_0,L_1),BGL_1\bbS\big)
    \end{tikzcd}
    .
    \end{equation}
Thus, it remains to construct a map
    \begin{equation}\label{eq:BGLaux}
    \map_*\big(\scrP(L_0,L_1),BGL_1\bbS\big)\to\mas^\theta,
    \end{equation}
and we proceed as follows. Given $h:\scrP(L_0,L_1)\to BGL_1\bbS$, we denote by $\widetilde{h}$ the composition 
    \begin{equation}
    \scrP(L_0,L_1)\xrightarrow{h}BGL_1\bbS\to\mathrm{hofib}B^2J.
    \end{equation}
Note, $h$ itself determines a null-homotopy of the composition 
    \begin{equation}
    \scrP(L_0,L_1)\xrightarrow{\widetilde{h}}\mathrm{hofib}B^2J\to B^2O.
    \end{equation}
Consider $F_\theta$ together with its homotopy witnessing homotopy commutativity of the diagram
    \begin{equation}
    \begin{tikzcd}
    & \mathrm{hofib}B^2J\arrow[d] \\
    \scrP(L_0,L_1)\arrow[r]\arrow[ur,"F_\theta"] & B^2O
    \end{tikzcd}
    .
    \end{equation}
We may define \eqref{eq:BGLaux} by sending $h$ to $F_\theta+\widetilde{h}$ together with the obvious homotopy witnessing homotopy commutativity of the diagram
    \begin{equation}
    \begin{tikzcd}
    & \mathrm{hofib}B^2J\arrow[d] \\
    \scrP(L_0,L_1)\arrow[r]\arrow[ur,"F_\theta+\widetilde{h}"] & B^2O
    \end{tikzcd}
    .
    \end{equation}
\end{proof}

Via precomposing with $BJ$, we have the following result.

\begin{cor}
Suppose $M$ is the plumbing of two cotangent bundles $T^*Q_i$ along $NC$, then we have a natural map
    \begin{equation}
    \map_*(C,BO)\to\mas^\theta.
    \end{equation}
\end{cor}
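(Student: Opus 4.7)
The plan is a one-line deduction from Lemma \ref{lem:BGL}. The real $J$-homomorphism $J : O \to GL_1\bbS$ is a map of $\bbE_\infty$-groups, so its classifying map $BJ : BO \to BGL_1\bbS$ is a pointed map (both spaces pointed at the classifying map of the trivial object). Postcomposition with $BJ$ then induces a natural pointed map
\begin{equation}
(BJ)_* : \map_*(C,BO) \to \map_*(C,BGL_1\bbS),
\end{equation}
where each side is pointed at the constant (trivial) map. Composing $(BJ)_*$ with the map constructed in Lemma \ref{lem:BGL} yields the desired pointed map $\map_*(C,BO) \to \mas^\theta$.

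There is essentially no obstacle here: the substantive content is entirely contained in Lemma \ref{lem:BGL}, and the corollary merely records that a real vector bundle $E \to C$ with a trivialization $E\vert_{c_0} \cong \bbR^n$ (i.e., a pointed classifying map to $BO$) determines Maslov data through its underlying stable spherical fibration. Conceptually, this is the twist of the ``standard'' Maslov data by a real vector bundle on $C$, parallel to the way the non-trivial line bundle on $S^1$ twisted the standard structures in Example \ref{example:introtoy}. The only point worth flagging for clarity is that the basepoint conventions match: the zero element of $\pi_0\map_*(C,BO)$ maps to the zero element of $\pi_0\map_*(C,BGL_1\bbS)$ since $BJ$ is pointed, and the latter is sent by the construction of Lemma \ref{lem:BGL} to $\theta \in \mas^\theta$ (because adding the null-homotopic $\widetilde{h}$ to $F_\theta$ recovers $F_\theta$), which is what ensures the map is basepoint-preserving.
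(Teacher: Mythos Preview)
Your proposal is correct and takes essentially the same approach as the paper: the corollary is stated immediately after the sentence ``Via precomposing with $BJ$, we have the following result,'' which is exactly your argument of postcomposing maps $C\to BO$ with $BJ$ and then invoking Lemma \ref{lem:BGL}. Your added remarks on basepoint compatibility are accurate but go beyond what the paper records.
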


\begin{rem}
The reader may now wonder why we chose to work in the generality of stable spherical fibrations as opposed to using the more classical (but less general) virtual vector bundles. The answer is that the split injectivity of \eqref{eq:tau} witnessed by \eqref{eq:splinj} will fail as a result of also incorporating the $J$-homomorphism. Our main result would not have been as general, since it would hinge on the non-triviality of the $J$-homomorphism in a certain degree; this shows that stable spherical fibrations are, at least in certain scenarios, more natural to consider.
\end{rem}

\subsection{Parameterized Lagrangian Floer homotopy type}
We denote by 
    \begin{equation}
    \calH\calJ^\reg
    \end{equation}
the Kan complex whose: 0-simplices are regular admissible pairs, 1-simplices are regular admissible Floer continuation data connecting two pairs of regular admissible pairs, 2-simplices are regular admissible homotopies connecting the concatenation of regular admissible Floer continuation data and regular admissible Floer continuation data, and so on. The following result is standard.

\begin{lem}
$\calH\calJ^\reg$ is an acyclic Kan complex.
\end{lem}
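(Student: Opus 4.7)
The plan is to exhibit $\calH\calJ^\reg$ as a Kan complex whose underlying space (forgetting the regularity condition) is contractible, and then show that regularity is a generic condition that can be achieved by perturbation in the interior relative to any prescribed boundary. These two facts together imply that $\calH\calJ^\reg$ is a contractible Kan complex, hence an acyclic Kan complex in the sense used in the paper.

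First, I would consider the auxiliary simplicial set $\calH\calJ$ whose $n$-simplices are (not necessarily regular) admissible Floer continuation data of the appropriate shape parameterized over $\Delta^n$. The two preceding lemmas assert that $\calH$ and $\calJ$ are non-empty and contractible, so the space of admissible Floer data $\calH\times\calJ$ is non-empty and contractible. An $n$-simplex in $\calH\calJ$ is a smoothly varying family of admissible pairs over $\Delta^n$ (together with the standard $s$-dependent cutoff data on ends), and the space of such families is likewise contractible by a straightforward convex combination argument using the contractibility of $\calH\times\calJ$; in particular $\calH\calJ$ is a contractible Kan complex.

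Next, I would carry out the standard family transversality argument to show that for any admissible $(\partial\Delta^n\to\calH\calJ)$ whose boundary data lies in $\calH\calJ^\reg$, there is an extension to $\Delta^n\to\calH\calJ^\reg$. The proof is the usual Sard--Smale argument: one forms the universal moduli space of parameterized Floer trajectories as the zero locus of the parameterized Floer section on an appropriate Banach bundle over $\Delta^n\times\calB$, where $\calB$ is the Banach manifold of maps $u\colon Z\to M$ of the correct Sobolev class with Lagrangian boundary conditions and asymptotic to prescribed Hamiltonian chords. Standard arguments (e.g.~as in Floer--Hofer--Salamon or \cite{Lar21}) show that the universal section is transverse to the zero section, and then the projection to the parameter space $\Delta^n$ (rel.~boundary, since regularity already holds on $\partial\Delta^n$) is Fredholm; applying Sard--Smale yields a generic family parameter realizing simultaneous transversality for all pairs of asymptotics. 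Combined with compactness of the Gromov--Floer compactification, this gives the required extension. This is simultaneously the horn-filling property for $\calH\calJ^\reg$ and the statement that the inclusion $\calH\calJ^\reg\hookrightarrow\calH\calJ$ is a trivial Kan fibration.

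Combining the two steps, $\calH\calJ^\reg$ is a Kan complex weakly equivalent to $\calH\calJ$, which is contractible, so $\calH\calJ^\reg$ is acyclic. The main obstacle is the family transversality argument, since one must simultaneously handle: (i) families of Floer strips with breaking asymptotics, where regularity on the boundary strata $\partial\Delta^n$ must be preserved, and (ii) the fact that an $n$-simplex for $n\geq 2$ encodes a homotopy between two different concatenations of continuation data, so one must set up the Banach manifold of $\Delta^n$-parameterized continuation data with the correct compatibility at the various boundary faces. Once this is set up, the transversality step is routine and the acyclicity follows immediately.
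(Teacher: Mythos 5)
The paper offers no proof of this lemma; it is simply asserted to be standard. Your sketch captures the essential ingredients of the standard argument, and the overall strategy is correct: contractibility of the unregularized simplicial set $\calH\calJ$ via convex combinations (using contractibility of $\calH\times\calJ$), followed by Sard--Smale parametric transversality to achieve regularity in the interior rel boundary. Combined, these give the boundary-filling property for $\calH\calJ^\reg$, which is exactly what it means to be an acyclic (contractible) Kan complex.

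One phrasing in your write-up is technically off, though it does not affect the conclusion. You say the transversality argument shows ``the inclusion $\calH\calJ^\reg\hookrightarrow\calH\calJ$ is a trivial Kan fibration.'' Since this inclusion is a monomorphism of simplicial sets, being a trivial Kan fibration would force it to be an isomorphism, which it is not. What the Sard--Smale argument actually yields is the statement you prove in the first sentence of that paragraph: given $\partial\Delta^n\to\calH\calJ^\reg$ and any extension to $\Delta^n\to\calH\calJ$ (which exists by step one), there is a perturbation of that extension, fixed on $\partial\Delta^n$, landing in $\calH\calJ^\reg$. This is the lifting property of the map $\calH\calJ^\reg\to *$ against all boundary inclusions $\partial\Delta^n\hookrightarrow\Delta^n$, which is precisely the definition of an acyclic Kan complex; there is no need to, and one cannot, package the argument as a trivial Kan fibration over $\calH\calJ$. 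Aside from this, the only points worth flagging are the ones you already raise: one must take care that the parametric perturbation is performed rel boundary (where the inductively established regularity must not be disturbed), and that for $n\geq 2$ the family of continuation data must be set up compatibly with all lower-dimensional faces and the associated concatenation/gluing structure at the corners. These are routine but genuinely the content of the argument.
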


In this subsection, we will construct our parameterized Lagrangian Floer homotopy type; this will be an $\infty$-functor 
    \begin{equation}
    \bbF^\sfr:\param\equiv\calH\calJ^\reg\times\mas\to\flow^\sfr.
    \end{equation}
    
Let $\bfH:\Delta^n\to\calH\calJ^\reg$ be an $n$-simplex, $n\geq1$. By an abuse of notation, we identify $\Delta^n$ with its geometric realization. Moreover, given 
    \begin{equation}
    \langle j_1\cdots j_k\rangle\subset\Delta^n, 0\leq j_1<\cdots <j_k\leq n,
    \end{equation}
we denote by 
    \begin{equation}
    \partial^{\langle j_1\cdots j_k\rangle}\Delta^n\subset\Delta^n
    \end{equation}
the associated boundary stratum. Associated to $\bfH$ is: an ordered $(n+1)$-tuple of regular admissible Floer data given by restricting to vertices,
    \begin{equation}
    \big(\bfH\vert_{\partial^{\langle 0\rangle}\Delta^n},\ldots,\bfH\vert_{\partial^{\langle n\rangle}\Delta^n}\big);
    \end{equation}
various regular admissible Floer continuation data given by restricting to edges,
    \begin{equation}
    \bfH\vert_{\partial^{\langle jk\rangle}\Delta^n}:\bfH\vert_{\partial^{\langle j\rangle}\Delta^n}\Rightarrow\bfH\vert_{\partial^{\langle k\rangle}\Delta^n},\;\; j<k;
    \end{equation}
various regular admissible homotopies connecting the concatentation of regular admissible Floer continuation data to regular admissible Floer continuation data given by restricting to 2-simplices,
    \begin{equation}
    \bfH\vert_{\partial^{\langle jk\ell\rangle}\Delta^n}:\bfH\vert_{\partial^{\langle jk\rangle}\Delta^n}\#\bfH\vert_{\partial^{\langle k\ell\rangle}\Delta^n}\Rightarrow\bfH\vert_{\partial^{\langle j\ell\rangle}\Delta^n},\;\;j<k<\ell;
    \end{equation}
and so on. The upshot is that, in a standard way, we may encode the various Floer continuation data, homotopies, homotopies of homotopies, etc. into an $(n-1)$ cube
    \begin{equation}
    \bfH^\mathrm{cube}:[0,1]^{n-1}\to\mathrm{Maps}\big([0,1],\calH\calJ^\reg\big),
    \end{equation}
whose coordinates are labeled by $\{1,\ldots,n-1\}$, such that a vertex of the cube (different from $(1,\ldots,1)$) determines a gluing of Floer continuation data 
    \begin{equation}
    \bfH\vert_{\partial^{\langle 0 j_1\rangle}\Delta^n}\#\bfH\vert_{\partial^{\langle j_1j_2\rangle}\Delta^n}\#\cdots\#\bfH\vert_{\partial^{\langle j_{k-1}j_k\rangle}\Delta^n}\#\bfH\vert_{\partial^{\langle j_kn\rangle}\Delta^n},\;\;0<j_1<\cdots<j_k<n,
    \end{equation}
where the coordinates of this vertex associated to all $j_{k'}$ labels are 0 and the coordinates of this vertex associated to all other labels are 1. Moreover, the vertex $(1,\ldots,1)$ determines 
    \begin{equation}
    \bfH\vert_{\partial^{\langle 0n\rangle}\Delta^n}.
    \end{equation}

Given any two $x\in\chi\big(L_0,L_1;\bfH\vert_{\partial^{\langle 0\rangle}\Delta^n}\big)$ and $y\in\chi\big(L_0,L_1;\bfH\vert_{\partial^{\langle n\rangle}\Delta^n}\big)$, we may consider the moduli space $\scrF^{\bfH^\mathrm{cube}}(x,y)$ consisting of tuples $(\vec{r},u)$, where $\vec{r}\in(0,1)^{n-1}$ and $u:Z\to M$ satisfies 
    \begin{equation}
    \begin{cases}
    \partial_su+J_{\bfH^\mathrm{cube}(\vec{r})}\Big(\partial_tu-X_{H_{\bfH^\mathrm{cube}(\vec{r})}}(u)\Big)=0 \\
    u(s,i)\in L_i \\
    \lim_{s\to-\infty}u(s,t)=x(t) \\
    \lim_{s\to+\infty}u(s,t)=y(t)
    \end{cases}
    .
    \end{equation}
where 
    \begin{equation}
    \Big(H_{\bfH^\mathrm{cube}(\vec{r})},J_{\bfH^\mathrm{cube}(\vec{r})}\Big)\equiv\bfH^\mathrm{cube}(\vec{r}).
    \end{equation}
Again, we see that $\scrF^{\bfH^\mathrm{cube}}(x,y)$ is a smooth manifold of dimension $\mu(x)-\mu(y)+n-1$ whose tangent bundle is classified by the index bundle $\ind D\overline{\partial}_\bfH$ of the family of surjective Fredholm operators 
    \begin{equation}
    D\overline{\partial}_\bfH\equiv\big\{D(\overline{\partial}_\bfH)_u:T(0,1)^{n-1}\oplus W^{1,2}(Z;u^*TM,u^*TL_i)\to L^2(Z;u^*TM)\big\},
    \end{equation}
given by linearizing the Floer equation. The following is essentially standard Gromov compactness.

\begin{prop}
Let $z$ denote an arbitrary time-1 Hamiltonian chord of the appropriate Hamiltonian. There is a natural Gromov-compactification $\bbF^{\bfH^\mathrm{cube}}(x,y)$ of $\scrF^{\bfH^\mathrm{cube}}(x,y)$ whose codimension 1 boundary strata are enumerated by natural gluing maps of the form
    \begin{align}
    \bbF^{\bfH\vert_{\partial^{\langle 0\rangle}\Delta^n}}(x,z)\times\bbF^{\bfH^\mathrm{cube}}(z,y)&\to\bbF^{\bfH^\mathrm{cube}}(x,y), \\
    \bbF^{\bfH^\mathrm{cube}}(x,z)\times\bbF^{\bfH\vert_{\partial^{\langle n\rangle}\Delta^n}}(z,y)&\to\bbF^{\bfH^\mathrm{cube}}(x,y), \\
    \dfrac{\coprod_z\Big(\bbF^{(\bfH\vert_{\partial^{\langle0\cdots j\rangle}\Delta^n})^\mathrm{cube}}(x,z)\times\bbF^{(\bfH\vert_{\partial^{\langle j\cdots n\rangle}\Delta^n})^\mathrm{cube}}(z,y)\Big)}{\sim}&\to\bbF^{\bfH^\mathrm{cube}}(x,y), \\
    \bbF^{(\bfH\vert_{\partial^{\langle 0\cdots\widehat{j}\cdots n\rangle}\Delta^n})^\mathrm{cube}}(x,y)&\to\bbF^{\bfH^\mathrm{cube}}(x,y),
    \end{align}
where $0<j<n$ and the equivalence relation identifies the images 
    \begin{equation}
    \begin{tikzcd}
    \bbF^{(\bfH\vert_{\partial^{\langle0\cdots j\rangle}\Delta^n})^\mathrm{cube}}(x,z')\times\bbF^{(\bfH\vert_{\partial^{\langle j\cdots n\rangle}\Delta^n})^\mathrm{cube}}(z',y) \\
    \bbF^{(\bfH\vert_{\partial^{\langle0\cdots j\rangle}\Delta^n})^\mathrm{cube}}(x,z)\times\bbF^{\bfH\vert_{\partial^{\langle j\rangle}\Delta^n}}(z,z')\times\bbF^{(\bfH\vert_{\partial^{\langle j\cdots n\rangle}\Delta^n})^\mathrm{cube}}(z,y)\arrow[d]\arrow[u]\\ \bbF^{(\bfH\vert_{\partial^{\langle0\cdots j\rangle}\Delta^n})^\mathrm{cube}}(x,z)\times\bbF^{(\bfH\vert_{\partial^{\langle j\cdots n\rangle}\Delta^n})^\mathrm{cube}}(z,y)
    \end{tikzcd}
    .
    \end{equation} 
\end{prop}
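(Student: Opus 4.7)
The plan is to apply standard Gromov compactness techniques for parameterized Floer continuation data and then systematically classify the codimension 1 degenerations. I would first establish a uniform energy bound for any sequence $(\vec{r}_n, u_n) \in \scrF^{\bfH^\mathrm{cube}}(x,y)$; this follows from the compactness of the cube $[0,1]^{n-1}$ (which yields uniform $C^0$ bounds on the Floer data) together with the maximum principle (exploiting admissibility of $(H,J)$ at infinity in $M$ to confine all the $u_n$ to a compact subset). With the energy bound in hand, standard Floer compactness (cf. \cite[Section 6]{Lar21}) yields, after passing to a subsequence, a Gromov limit consisting of a broken trajectory together with a limit parameter $\vec{r}_\infty \in [0,1]^{n-1}$.

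Next, I would enumerate the limiting configurations by a case analysis of where $\vec{r}_\infty$ and where the breaking occurs. When $\vec{r}_\infty$ lies in the open interior $(0,1)^{n-1}$, the only degeneration is $s$-direction breaking: a break at the $-\infty$ asymptote produces an element of $\bbF^{\bfH\vert_{\partial^{\langle 0\rangle}\Delta^n}}(x,z)\times\bbF^{\bfH^\mathrm{cube}}(z,y)$ for some intermediate chord $z$, giving stratum (1), while a break at $+\infty$ yields stratum (2). When a coordinate $r_{j,n} \to 0$ for some $0 < j < n$, the construction of $\bfH^\mathrm{cube}$ forces the parameterized Floer data to neck-stretch into the concatenation of the sub-cubes associated to the faces $\partial^{\langle 0\cdots j\rangle}\Delta^n$ and $\partial^{\langle j\cdots n\rangle}\Delta^n$; the limit splits at an intermediate chord $z \in \chi(L_0,L_1;\bfH\vert_{\partial^{\langle j\rangle}\Delta^n})$, yielding stratum (3), and the equivalence relation in the statement accounts for possible additional breaking at $z$. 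When $r_{j,n} \to 1$, the cube $\bfH^\mathrm{cube}$ restricts to the sub-cube associated to $\partial^{\langle 0\cdots\widehat{j}\cdots n\rangle}\Delta^n$ (since the $j$-th coordinate no longer plays a role), giving stratum (4). A dimension count using $\dim \scrF^{\bfH^\mathrm{cube}}(x,y) = \mu(x)-\mu(y)+n-1$ confirms that these four families are precisely the codimension 1 strata.

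Finally, I would equip $\bbF^{\bfH^\mathrm{cube}}(x,y)$ with the structure of a stratified compact smooth manifold with corners realizing the above enumeration, and verify that the listed gluing maps are embeddings satisfying the required associativity. Smoothness of the gluing charts is standard surjective Fredholm analysis: pregluing plus Newton iteration in the $s$-direction provides the collar for strata (1), (2), (3), while the $\vec{r}$-direction is naturally parameterized by an open subset of the cube and provides the collar for stratum (4); the linearized operators $D\overline{\partial}_\bfH$ remain surjective throughout by regularity of $\bfH$.

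The main obstacle is the combinatorial bookkeeping: one must match the boundary stratification of the parameterizing cube $[0,1]^{n-1}$ (in which corners of codimension $k$ correspond to choosing $k$ of the coordinates to be $0$ or $1$) with the simplicial combinatorics of $\Delta^n$ encoded by the tree model $\vec{\calP}(x,y)$, confirm that simultaneous degenerations (e.g., $r_j \to 0$ together with $s$-direction breaking, or $r_j \to 0$ together with $r_k \to 1$) are genuinely of codimension $\geq 2$, and check that the associativity diagram intertwining all four types of gluing maps commutes on the nose. This combinatorial verification proceeds by induction on $n$, reducing to the case $n = 1$ (ordinary continuation compactness) and $n = 2$ (the homotopy-of-continuations case) where the analogous statements are already in the literature.
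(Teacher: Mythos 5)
The paper offers no written proof for this proposition, merely prefacing it with the remark that it is ``essentially standard Gromov compactness.'' Your proposal correctly fills in the argument the authors leave implicit, and it does so in the expected way: uniform energy bounds from compactness of $[0,1]^{n-1}$ together with the maximum principle supplied by admissibility of the Floer data, followed by standard Gromov compactness, followed by a case analysis on where the limit parameter $\vec{r}_\infty$ lands on the cube and whether $s$-direction breaking occurs.

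Your four-way classification of codimension~$1$ degenerations matches the statement exactly: breaking at $-\infty$ or $+\infty$ with $\vec{r}_\infty$ interior gives strata (1) and (2); $r_j\to 0$ stretches the neck at the $j$-th vertex and produces the concatenation stratum (3), with the stated equivalence relation accounting for simultaneous breaking at a chord of $\bfH\vert_{\partial^{\langle j\rangle}\Delta^n}$, which could be absorbed into either half of the product; and $r_j\to 1$ collapses onto the sub-cube for the face $\partial^{\langle 0\cdots\widehat{j}\cdots n\rangle}\Delta^n$, matching the paper's convention that the vertex $(1,\dots,1)$ of the cube indexes $\bfH\vert_{\partial^{\langle 0n\rangle}\Delta^n}$. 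You correctly flag that simultaneous degenerations live in codimension $\geq 2$, and your dimension count $\mu(x)-\mu(y)+n-1$ agrees with the paper's. The manifold-with-corners and coherent-framing discussion you append is really the content of the paper's subsequent Proposition~\ref{prop:parameterized} (for which the authors defer to Large and Porcelli--Shelukhin), but it does no harm to include a sketch. In short: your proposal is correct and takes the approach the paper intends.
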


Moreover, the following result follows essentially from the proof of  \cite[Theorem 6.12, Proposition 7.4]{Lar21} (also, cf. \cite[Sections 6.2, 6.4, 6.5]{PS24b}).

\begin{prop}\label{prop:parameterized}
$\bbF^{\bfH^\mathrm{cube}}(x,y)$ is a stratified compact smooth manifold with corners whose tangent bundle is classified by (an extension of) $\ind D\overline{\partial}_\bfH$. Moreover, the natural decomposition of $T\bbF^{\bfH^\mathrm{cube}}(x,y)$ over a stratum into the direct sum of the tangent bundles of that stratum together with the collar directions, the natural decomposition of $\ind D\overline{\partial}_\bfH$ over a stratum into the direct sum of the index bundles of the families of linearized operators associated to that stratum, and the various isomorphism of the form $T\bbF^{\bfH^\mathrm{cube}}(x,y)\cong\ind D\overline{\partial}_\bfH$ are compatible.
\end{prop}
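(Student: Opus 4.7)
The plan is to reduce Proposition \ref{prop:parameterized} to the unparameterized case treated by Large, by viewing the cube of Floer data as providing an extra $(n-1)$-dimensional family of Cauchy--Riemann problems and then adapting the gluing analysis fiberwise. First, we would verify the smooth manifold with corners structure on $\bbF^{\bfH^\mathrm{cube}}(x,y)$ away from the boundary: the universal moduli space is cut out transversally by the parameterized Floer equation over $\bfH^\mathrm{cube}:[0,1]^{n-1}\to\mathrm{Maps}([0,1],\calH\calJ^\reg)$, and genericity of $\bfH$ ensures that $\scrF^{\bfH^\mathrm{cube}}(x,y)$ is smooth of the expected dimension $\mu(x)-\mu(y)+n-1$, with tangent bundle classified by the index bundle $\ind D\overline{\partial}_\bfH$ of the family of linearized operators (the factor $T(0,1)^{n-1}$ accounting for the parameter directions).

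Next, we would construct the corner structure near boundary strata. There are two types of codimension 1 phenomena: (a) Floer breaking at chords of the boundary Floer data $\bfH\vert_{\partial^{\langle 0\rangle}\Delta^n}$ or $\bfH\vert_{\partial^{\langle n\rangle}\Delta^n}$, which in the cube picture corresponds to sending the $s$-parameter to $\pm\infty$; (b) cubical degenerations, in which either a coordinate $r_j\to 0$ forces concatenation of Floer continuation data across the $j$-th simplex vertex, or a coordinate $r_j\to 1$ corresponds to deletion of the $j$-th vertex (yielding the face $\partial^{\langle 0\cdots\widehat{j}\cdots n\rangle}\Delta^n$). For case (a) we run the standard Floer pregluing-plus-Newton iteration in the parameterized form, as in \cite[Section 6]{Lar21} and \cite[Sections 6.2, 6.4]{PS24b}, producing smooth collar neighborhoods compatible with the enumerated gluing maps. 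For case (b), the cube coordinates already provide natural collar directions, so the manifold with corners structure near those strata is essentially built-in; what requires verification is that when a cube degeneration is concatenated with a Floer breaking, the two types of collar directions give a consistent smooth corner chart. After assembling charts, we invoke Large's smoothing procedure to upgrade the topological corner structure to a genuine smooth $\langle k\rangle$-manifold structure.

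For the tangent bundle identification, we would extend $\ind D\overline{\partial}_\bfH$ to the Gromov compactification by linear gluing: the linearized operators at a broken or degenerated configuration decompose canonically as direct sums of the linearized operators at each piece plus, where a cube coordinate $r_j$ has degenerated, a corresponding $\bbR$-factor encoding the collar. On each stratum, $T\bbF^{\bfH^\mathrm{cube}}(x,y)$ splits as the tangent bundle of that stratum direct-summed with the collar directions. The identification of these two decompositions is then established via the standard linearized gluing isomorphism: the collar coordinate corresponds analytically to the gluing parameter, whose infinitesimal variation generates the corresponding $\bbR$-summand in the index bundle. This is exactly the compatibility established in \cite[Section 7]{Lar21} and \cite[Sections 6.4, 6.5]{PS24b}, and the only new input here is to carry the cube parameter $\vec{r}$ along as a spectator in every estimate.

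The main obstacle will be establishing the compatibility in case (b), particularly at codimension 2 corners where a cube degeneration meets a Floer breaking: we must verify that the linearized gluing isomorphism for Floer breaking and the tautological collar identification for the cube fit into a commuting square of isomorphisms of index bundles, and in particular that the associativity diagrams across mixed-type codimension 2 strata commute up to the canonical identifications. This is bookkeeping-intensive but structurally follows the same Eckmann--Hilton-flavored argument used at the end of the proof of Proposition \ref{prop:unparamterizedframing}, applied one dimension higher.
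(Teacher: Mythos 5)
Your proposal is correct and tracks the approach the paper itself takes: the paper gives no self-contained argument for Proposition \ref{prop:parameterized} but instead states that it follows essentially from \cite[Theorem 6.12, Proposition 7.4]{Lar21} and \cite[Sections 6.2, 6.4, 6.5]{PS24b}, and your sketch is exactly the adaptation of that gluing analysis with the cube parameter $\vec{r}$ carried along as a spectator. One small misattribution worth fixing: the Eckmann--Hilton interchange you invoke at the end is, in the paper, reserved for the compatibility of the spherical \emph{framing} with gluing (Propositions \ref{prop:unparamterizedframing} and \ref{prop:coherentorientations}); for Proposition \ref{prop:parameterized} the codimension 2 compatibility is simply the associativity of the linear gluing isomorphisms for $\ind D\overline{\partial}_\bfH$ (your ``commuting square of isomorphisms of index bundles''), a direct analytic check that does not require any interchange argument.
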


Now, let $\bfH\bfTheta:\Delta^n\to\param$ be an $n$-simplex, $n\geq1$. We denote by $\bfH$ resp. $\bfTheta$ the obvious $n$-simplex induced via projection:
    \begin{equation}
    \Delta^n\xrightarrow{\bfH\bfTheta}\param\to\calH\calJ^\reg\;\;\mathrm{resp.}\;\;\Delta^n\xrightarrow{\bfH\bfTheta}\param\to\mas.
    \end{equation}

\begin{prop}\label{prop:coherentorientations}
Suppose Assumption \ref{assu:main}, then the various $\bbF^{\bfH^\mathrm{cube}}(x,y)$ admit coherent $\bbS$-orientations.
\end{prop}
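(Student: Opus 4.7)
The plan is to mimic the proof of Proposition \ref{prop:unparamterizedframing}, carrying along both the cube parameter $\vec r \in (0,1)^{n-1}$ parameterizing the Floer continuation data and the simplex parameter coming from the family $\bfTheta$ of Maslov data. By Proposition \ref{prop:parameterized}, the tangent bundle $T\bbF^{\bfH^\mathrm{cube}}(x,y)$, extended to the Gromov compactification, is classified by the family index bundle $\ind D\overline{\partial}_\bfH$. Since the asymptotic operators at $\pm\infty$ depend only on the chords $x,y$ (and not on $\vec r$ or on the point of $\Delta^n$), I would use the same Floer abstract caps $T_{\frakF,x}, T_{\frakF,y}$ as in the unparameterized case to form
$$D\overline{\partial}_{\mathrm{aux}} \equiv D\overline{\partial}_\bfH + T_{\frakF,y} - T_{\frakF,x} : \bbF^{\bfH^\mathrm{cube}}(x,y) \to BO.$$
Unlike the unparameterized case this classifies a virtual bundle of rank $n-1$; splitting off the trivial summand $\underline{\bbR}^{n-1}$ corresponding to the cube directions yields a virtual-rank-zero classifying map, which is the direct analog of \eqref{eq:coherentsaux1}.

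Next I would factor this map through a pathspace. There is a natural evaluation map
$$\bbF^{\bfH^\mathrm{cube}}(x,y) \to \Omega_{x,y}\scrP(L_0,L_1) \simeq \Omega\scrP(L_0,L_1),$$
sending $(\vec r, u)$ to the loop in $\scrP(L_0,L_1)$ traced out by $u(s,\cdot)$ as $s$ ranges over $\bbR$; crucially, the $\vec r$-dependence of $\bfH^\mathrm{cube}$ enters only through the Floer equation and not through this homotopy class of loop, so the map is well-defined up to homotopy. Composing with the loopspace of \eqref{eq:assu}, I obtain the parameterized analog of \eqref{eq:coherentsaux2}, which by \cite[\S 2.3]{Bon25} is equivalent to the virtual-rank-zero version of $BJ \circ D\overline{\partial}_{\mathrm{aux}}$.

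Third, I would use the family $\bfTheta$ to produce the coherent null-homotopy. The natural map $\bbF^{\bfH^\mathrm{cube}}(x,y) \to (0,1)^{n-1}$ to the cube of parameters, composed with the cube-to-simplex identification $(0,1)^{n-1} \hookrightarrow \Delta^n$ built into the $\bfH^\mathrm{cube}$ construction, pulls $\bfTheta$ back to a continuous family of null-homotopies over $\bbF^{\bfH^\mathrm{cube}}(x,y)$. This produces a virtual-rank-zero real virtual bundle $I_{\bfTheta}(x,y) \to \bbF^{\bfH^\mathrm{cube}}(x,y)$ together with a trivialization as a stable spherical fibration, and the isomorphism
$$T\bbF^{\bfH^\mathrm{cube}}(x,y) + \underline{\bbR} + \underline{\bbR}^{\mu(y)} \cong \underline{\bbR}^{\mu(x)} + \underline{\bbR}^n + I_{\bfTheta}(x,y)$$
required by Definition \ref{defin:framedflowsimplex}, where the $\underline{\bbR}^n$ on the right absorbs both the $(n-1)$ cube parameter directions and the usual collar factor $\underline{\bbR}$.

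The main obstacle is verifying the associativity/coherence diagrams at each of the four types of codimension 1 boundary strata described in the preceding proposition: (i)-(ii) gluing with an unparameterized Floer trajectory at the vertices $\partial^{\langle 0\rangle}\Delta^n$ or $\partial^{\langle n\rangle}\Delta^n$, (iii) interior splittings along an intermediate vertex $\partial^{\langle j\rangle}\Delta^n$ with $0<j<n$, and (iv) face restrictions to $\partial^{\langle 0\cdots\widehat j\cdots n\rangle}\Delta^n$. For types (i)-(iii), compatibility is a routine consequence of the cancellation $T_{\frakF,z} - T_{\frakF,z} \simeq 0$ at intermediate breakings, of additivity of the family index under gluing, and of the Eckmann-Hilton argument identifying loop concatenation with the $\bbE_\infty$-sum on $BGL_1\bbS$, just as in Proposition \ref{prop:unparamterizedframing}. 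For type (iv), compatibility is built into the simplicial structure of $\bfTheta$: restricting $\bfTheta$ along a face inclusion is, by the very definition of a simplex in $\mas$, precisely the family of Maslov data that defines the framing on the corresponding lower-dimensional flow simplex, so the constructed null-homotopies restrict compatibly.
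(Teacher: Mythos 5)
Your proposal follows the paper's proof essentially line by line: defining $D\overline{\partial}_{\mathrm{aux}}$ by subtracting off the Floer abstract caps and the trivial cube directions $\underline{\bbR}^{n-1}$, invoking the index-theoretic comparison of \cite{Bon25} to identify this with the loop-space composition, pulling $\bfTheta$ back along the projection to the cube/simplex to obtain the null-homotopy, and citing the same coherence ingredients (cap cancellation, index additivity under gluing, Eckmann--Hilton, and simplicial compatibility of $\bfTheta$). The only cosmetic difference is that you frame the $\underline{\bbR}^{n-1}$ subtraction as a second step rather than folding it into the definition of $D\overline{\partial}_{\mathrm{aux}}$, and you write the cube-to-simplex comparison as $(0,1)^{n-1}\hookrightarrow\Delta^n$; both are consistent with the paper's argument.
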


\begin{proof}
This is essentially the same as the proof of Proposition \ref{prop:unparamterizedframing}. By Proposition \ref{prop:parameterized}, $T\bbF^{\bfH^\mathrm{cube}}(x,y)$ is classified by a map
    \begin{equation}
    D\overline{\partial}_\bfH:\bbF^{\bfH^\mathrm{cube}}(x,y)\to\bbZ\times BO
    \end{equation}
related to the index bundles of the various families of linearized operators which is manifestly compatible with gluing. Again, we may consider the map
    \begin{equation}\label{eq:p1}
    D\overline{\partial}_\mathrm{aux}:\bbF^{\bfH^\mathrm{cube}}(x,y)\to BO
    \end{equation}
given by 
    \begin{equation}
    D\overline{\partial}_\bfH+T_{\frakF,y}-T_{\frakF,x}-\underline{\bbR}^{n-1}.
    \end{equation}
Meanwhile, we have the composition 
    \begin{equation}\label{eq:p2}
    \bbF^{\bfH^\mathrm{cube}}(x,y)\to\Omega\scrP(L_0,L_1)\to\Omega(U/O)\xrightarrow{\sim}\Omega^\infty KO\to BO\to BGL_1\bbS;
    \end{equation}
\cite[Proposition 3.17]{Bon25} shows \eqref{eq:p2} is equivalent to \eqref{eq:p1} (after postcomposing the latter with $BJ$). In a standard way, we have a projection map 
    \begin{equation}
    \bbF^{\bfH^\mathrm{cube}}(x,y)\to\Delta^n
    \end{equation}
(really, this projection exists after fixing an appropriate identification $[0,1]^n\cong\Delta^n$). We may choose a null-homotopy $\bfTheta^n$ of \eqref{eq:p2} by lifting $\bfTheta$ to $\bbF^{\bfH^\mathrm{cube}}(x,y)$ via the aforementioned projection; this is equivalent to saying we have an induced choice of virtual rank 0 real virtual bundle $I_\bfTheta(x,y)\to\bbF^{\bfH^\mathrm{cube}}(x,y)$, together with a trivialization as a stable spherical fibration, and a choice of isomorphism of real virtual bundles
    \begin{equation}
    T\bbF^{\bfH^\mathrm{cube}}(x,y)+\underline{\bbR}^{\mu(y)}\cong\underline{\bbR}^{\mu(x)}+\underline{\bbR}^{n-1}+I_\bfTheta(x,y).
    \end{equation}
Moreover, compatibility with inclusion of codimension 1 boundary strata follows by (1) canonical isomorphisms of the form 
    \begin{equation}
    \ind T_{\frakF,x}-\ind T_{\frakF,x}\cong0;
    \end{equation}
(2) the fact that (the extension to the Gromov-compactification of) $\ind D\overline{\partial}_\bfH$ is compatible with gluing; (3) the fact that the Bott isomorphism intertwines the $\bbE_\infty$-structure given by addition of index bundles and the $\bbE_1$-structure given by loop concatenation via the Eckmann-Hilton argument; and (4) the form of $\bfTheta^n$.
\end{proof}

The upshot is that we have a spherically framed flow $n$-simplex $\bbF^{\bfH\bfTheta}$ with object $(n+1)$-tuple
    \begin{equation}
    \big(\chi(L_0,L_1;H_0),\ldots,\chi(L_0,L_1;H_n)\big),
    \end{equation}
where 
    \begin{equation}
    \bfH\bfTheta\big(\partial^{\langle j\rangle}\Delta^n\big)=\big(H_j,J_j,\widetilde{\theta}_j\big),
    \end{equation}
morphism spaces $\bbF^{\bfH^\mathrm{cube}}(x,y)$, and spherical framing induced by Proposition \ref{prop:coherentorientations}. In the case that $\bfH\bfTheta:\Delta^0\to\param$ is a 0-simplex, we define $\bbF^{\bfH\bfTheta}$ to simply be $\bbF^{H,J,\theta}$, where $\bfH\bfTheta(\Delta^0)=(H,J,\theta)$. Finally, it is straightforward to see that if 
    \begin{equation}
    \bfH\bfTheta\to\partial^\sigma\bfH\bfTheta\equiv\bfH\bfTheta\vert_{\partial^\sigma\Delta^n}
    \end{equation}
is a face map, then we have a corresponding face map 
    \begin{equation}
    \bbF^{\bfH\bfTheta}\to\bbF^{\partial^\sigma\bfH\bfTheta}; 
    \end{equation}
these satisfy the natural associativity diagrams. In particular, we may define a morphism of semi-simplicial sets 
    \begin{equation}
    \bbF^\sfr:\param\to\flow^\sfr
    \end{equation}
which, on objects, sends an $n$-simplex $\bfH\bfTheta:\Delta^n\to\param$ to the spherically framed flow $n$-simplex $\bbF^{\bfH\bfTheta}$. By \cite[Theorem 1.4]{Tan18}, $\bbF^\sfr$ lifts to an $\infty$-functor.

\begin{defin}
The \emph{parameterized (spherically framed) Lagrangian Floer homotopy type} of $(L_0,L_1)$ is the $\infty$-functor 
    \begin{equation}
    \param\xrightarrow{\bbF^\sfr}\flow^\sfr\xrightarrow{\flow^\sfr(\unit,\cdot)}\mathrm{Mod}_{\frakR^\sfr},
    \end{equation}
denoted $\frakF^\sfr$.
\end{defin}

The following lemma follows since $\calH\calJ^\mathrm{reg}$ is an acyclic Kan complex.

\begin{lem}
The natural projection 
    \begin{equation}
    \param\to\mas
    \end{equation}
is an acyclic Kan fibration.
\end{lem}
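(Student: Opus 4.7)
The plan is to identify the projection $\param\to\mas$ as a pullback of the map $\calH\calJ^\reg\to\ast$ and invoke the stability of (acyclic) Kan fibrations under base change. Specifically, since by definition $\param\equiv\calH\calJ^\reg\times\mas$, the projection onto the second factor fits into the pullback square
    \begin{equation}
    \begin{tikzcd}
    \calH\calJ^\reg\times\mas\arrow[r]\arrow[d] & \calH\calJ^\reg\arrow[d] \\
    \mas\arrow[r] & \ast
    \end{tikzcd}
    ,
    \end{equation}
so it suffices to show the right vertical map is an acyclic Kan fibration and to use that this property is preserved by pullback in the Kan-Quillen model structure on simplicial sets.

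First I would recall that, by the preceding lemma of the paper, $\calH\calJ^\reg$ is an acyclic Kan complex. By definition, this is equivalent to saying that the unique map $\calH\calJ^\reg\to\ast$ has the right lifting property with respect to every boundary inclusion $\partial\Delta^n\hookrightarrow\Delta^n$ for $n\geq 0$; that is, it is an acyclic Kan fibration. Then I would invoke the standard fact that the class of acyclic Kan fibrations (equivalently, trivial fibrations in the Kan-Quillen model structure) is closed under pullback, which follows immediately from the closure of the lifting property under pullback.

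Applying this stability to the pullback square above exhibits $\param\to\mas$ as an acyclic Kan fibration, completing the proof. There is no real obstacle here; the statement is a formal consequence of the already-established contractibility of $\calH\calJ^\reg$ together with the product structure on $\param$. The only small subtlety worth remarking on is that one should work in the Kan-Quillen model structure (rather than, say, the Joyal model structure) so that \emph{acyclic} is interpreted as genuine Kan contractibility, which is exactly what the preceding lemma provides.
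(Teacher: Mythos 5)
Your proof is correct and is exactly the argument the paper intends: the paper's proof is the one-line remark that the lemma follows because $\calH\calJ^\reg$ is an acyclic Kan complex, and you have simply spelled out the implicit pullback square and the stability of trivial Kan fibrations under base change.
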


\begin{cor}
We may identify $\frakF^\sfr$ with an $\infty$-functor
    \begin{equation}\label{eq:aux7}
    \mas\to\mathrm{Mod}_{\frakR^\sfr},
    \end{equation}
which, by an abuse of notation, we still denote by $\frakF^\sfr$.
\end{cor}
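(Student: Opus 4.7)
The proof plan is essentially a direct application of the preceding lemma. Since the projection $p:\param\to\mas$ is an acyclic Kan fibration, it is in particular a trivial fibration in the standard Kan--Quillen model structure on simplicial sets, and hence has the right lifting property against every monomorphism. Applying this to $\emptyset\hookrightarrow\mas$ produces a section $s:\mas\to\param$ of $p$; a standard argument (solving lifting problems against the boundary inclusions $\partial\Delta^n\hookrightarrow\Delta^n$ together with the previously chosen lower simplices) moreover shows that the simplicial set of such sections is contractible.

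I would then define the $\infty$-functor in \eqref{eq:aux7} to be the composition
\[
\mas\xrightarrow{s}\param\xrightarrow{\frakF^\sfr}\mathrm{Mod}_{\frakR^\sfr},
\]
where the second arrow is the $\infty$-functor constructed in the preceding subsection. Because the space of sections is contractible, any two such choices of $s$ are related by a contractible space of homotopies, so the composite is well-defined as an $\infty$-functor up to contractible indeterminacy -- which is precisely what justifies the advertised abuse of notation.

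Equivalently and more invariantly, one can observe that an acyclic Kan fibration is also a categorical equivalence (it is a trivial fibration in the Joyal model structure), so $p$ becomes an equivalence in the $\infty$-category of $\infty$-categories; then \eqref{eq:aux7} is obtained by composing $\frakF^\sfr$ with a chosen inverse of $p$. The two descriptions agree up to contractible choice, since an inverse of a trivial fibration is, up to homotopy, a section of that fibration. I do not anticipate any real obstacle here: the entire content is the combination of the preceding lemma with the lifting property of trivial fibrations, and the only care needed is to ensure that the chosen section is used coherently in subsequent applications -- something that is harmless for the intended use in Proposition \ref{prop:non-zero}, where only the underlying $\infty$-functor up to homotopy is invoked.
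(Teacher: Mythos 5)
Your proposal is correct and is precisely the argument the paper leaves implicit (the corollary is stated without proof as following from the preceding lemma). Choosing a section of the acyclic Kan fibration $\param\to\mas$ — or, equivalently, composing with an inverse after noting that a trivial fibration in the Kan--Quillen model structure is also one in the Joyal model structure, since both have monomorphisms as cofibrations — and postcomposing with $\frakF^\sfr$ is exactly the intended construction, with the contractible space of sections justifying the abuse of notation.
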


We end this section by showing that, as foreshadowed earlier, while $\frakF^\sfr$ may be a quite complicated object in general, in certain geometric situations we may be able to understand pullbacks of it to more manageable spaces. Consider the following result.

\begin{prop}\label{prop:pullback}
Suppose $M$ is the plumbing of two cotangent bundles $T^*Q_i$ along $NC$, then the $\infty$-functor
    \begin{equation}
    \map_*(C,BGL_1\bbS)\to\mas^\theta\xrightarrow{\frakF^\sfr}\mathrm{Mod}_{\frakR^\sfr}
    \end{equation}
(after choosing a suitable basepoint $\theta$) is the $\infty$-functor given on 0-simplices by 
    \begin{equation}
    \big(\Phi:C\to BGL_1\bbS\big)\mapsto C^\Phi\wedge\frakR^\sfr,
    \end{equation}
where $C^\Phi$ is the Thom spectrum associated to $\Phi$.
\end{prop}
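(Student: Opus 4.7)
The plan is to compute $\frakF^\sfr$ at the basepoint $\theta$ via a Morse--Bott degeneration to $C$, and then identify the effect of the twisting map constructed in Lemma \ref{lem:BGL} with the ordinary Thom-spectrum twist of a Morse flow category on $C$. Concretely, I would first pick the basepoint $\theta$ induced by the standard stable $\bbR$-polarization $\Lambda_\std$ on the plumbing $M = T^*Q_0\cup_{NC}T^*Q_1$ and the standard framed brane structures on $L_i$ (these exist because near $C$ the ambient symplectic manifold is modeled on $T^*(NC\otimes_\bbR\underline{\bbC})$ and $L_i$ corresponds to a real form). Then I would choose the admissible perturbation $H=\epsilon(f\circ\pi_C)$ obtained by lifting a Morse function $f$ on $C$ using the projection $\pi_C$ on a Weinstein neighborhood of $C$, and a compatible $J$. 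By the standard Po\'zniak--type Morse--Bott analysis, the time-$1$ chords of $H$ are in bijection with $\crit(f)$, and for a regular pair $(H,J)$ each compactified Floer moduli space $\bbF^{H,J}(x,y)$ is diffeomorphic, as a stratified compact manifold with corners, to the compactified Morse moduli space $\bbM^{f,g}_C(x,y)$.

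Next, I would verify that at the basepoint $\theta$ the induced spherical framing on $\bbF^{H,J}(x,y)$ coincides with the standard (spherically framed upgrade of the) Morse framing on $\bbM^{f,g}_C(x,y)$. This follows by tracing through the proof of Proposition \ref{prop:unparamterizedframing}: the classifying map $\bbF^{H,J}(x,y)\to\Omega\scrP(L_0,L_1)\to BGL_1\bbS$ becomes, under the Morse--Bott identification, the classifying map $\bbM^{f,g}_C(x,y)\to\Omega C\to BGL_1\bbS$ which is null-homotopic for the standard framing coming from $\Lambda_\std$. Consequently $\frakF^{H,J,\theta}\simeq\Sigma^\infty_+C\wedge\frakR^\sfr=C^{\underline{\bbS}}\wedge\frakR^\sfr$, which is the image of the trivial spherical fibration under the claimed functor. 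The key homotopy-commutative diagram here is
    \begin{equation*}
    \begin{tikzcd}
    \bbF^{H,J}(x,y)\arrow[r]\arrow[d,"\sim"] & \Omega_{x,y}\scrP(L_0,L_1)\arrow[d] \\
    \bbM^{f,g}_C(x,y)\arrow[r] & \Omega_{x,y}C
    \end{tikzcd}
    ,
    \end{equation*}
where the right-hand column is induced by the inclusion of constant paths $C\hookrightarrow\scrP(L_0,L_1)$.

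For a general $\Phi\in\map_*(C,BGL_1\bbS)$, the twist of $\theta$ constructed in Lemma \ref{lem:BGL} modifies the null-homotopy of the framing classifying map by the composite $\bbF^{H,J}(x,y)\to\Omega\scrP(L_0,L_1)\to\Omega\Omega\Sigma C\to\Omega BGL_1\bbS=GL_1\bbS$. Using the above diagram together with the defining universal property of the James construction (the extension $\Omega\Sigma C\to BGL_1\bbS$ of $\Phi$ restricts along $C\hookrightarrow\Omega\Sigma C$ to $\Phi$ itself), this twist factors as the endpoint evaluation $\bbM^{f,g}_C(x,y)\to C\times C$ followed by $\Phi^{-1}\wedge\Phi$ on the fiber. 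This is precisely the twist used to define the twisted Morse flow category $\bbM^{f,g}_{C,\Phi}$ discussed after Remark \ref{rem:structuredflow}, whose associated framed spectrum is the Thom spectrum $C^\Phi$. Passing to the spherically framed world via Proposition \ref{prop:sfrfr} gives $C^\Phi\wedge\frakR^\sfr$. Finally, the $\infty$-functorial naturality in $\Phi$ is automatic, since all the constructions in sight (Morse--Bott reduction, James extension, twist of framing) are natural and are already organized into $\infty$-functors by Proposition \ref{prop:coherentorientations}.

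The main obstacle is the middle step: making precise the claim that the twist of the Floer framing coming from the James-extended map factors through the endpoint evaluation on $C$. This requires a careful check that the natural map $\bbF^{H,J}(x,y)\to\Omega\scrP(L_0,L_1)\to\Omega\Omega\Sigma C$ is compatible, up to coherent homotopy, with the evaluation of Morse trajectories at their endpoints; concretely, one must verify that a Morse trajectory $\gamma\subset C$, viewed in $\scrP(L_0,L_1)$ as a $1$-parameter family of constant paths, maps to the $\Omega\Sigma C$-loop whose image under the $\mathbb{E}_\infty$-extension of $\Phi$ is homotopic to $\Phi(\gamma(+\infty))-\Phi(\gamma(-\infty))$. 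Once this homotopy is established (using the explicit identification $M\cup(C(L_0)\vee C(L_1))\simeq\Sigma C$ used in the proof of Lemma \ref{lem:BGL}), the remainder is bookkeeping.
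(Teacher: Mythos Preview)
Your proposal is correct and follows essentially the same route as the paper: pick $\theta$ from the standard polarization and brane structures, reduce the Floer flow category to the Morse flow category of a Morse function on $C$ via the local model near the clean intersection, and then identify the twist by $\Phi$ (coming from Lemma \ref{lem:BGL}) with the $\Phi$-twist of the Morse framing, invoking Proposition \ref{prop:sfrfr} to pass from framed to spherically framed. The paper carries out explicitly exactly the step you flag as the ``main obstacle'': it shows that under $\scrP(L_0,L_1)\to\Omega\Sigma C$ the constant path at $c$ goes to the standard loop $\gamma_c$, whence the James-extended map sends it to $\Phi\vert_c$, and after looping this becomes the constant map $\Gamma_{c,c'}\mapsto\Phi\vert_c-\Phi\vert_{c'}$, i.e., precisely the endpoint twist you describe. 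One small correction: in your commutative square the right-hand vertical arrow should point from $\Omega_{x,y}C$ up to $\Omega_{x,y}\scrP(L_0,L_1)$ (induced by the inclusion of constant paths), giving a factorization of the top row through the bottom row rather than a map in the other direction.
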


\begin{rem}\label{rem:pullback}
If we further consider the $\infty$-functor
    \begin{multline}
    \map_*(C,BGL_1\bbS)_{\Phi_0}\to\map_*(C,BGL_1\bbS)\to\mas^\theta\xrightarrow{\frakF^\sfr}\mathrm{Mod}_{\frakR^\sfr}
    \end{multline}
where $\map_*(C,BGL_1\bbS)_{\Phi_0}$ is the connected component of some $\Phi_0$, Proposition \ref{prop:pullback} shows the pullback of $\frakF^\sfr$ is a parameterized spectrum, with base the connected component of $\Phi_0$, whose fiber is abstractly homotopy equivalent to $C^{\Phi_0}\wedge\frakR^\sfr$. However, as we shall see shortly, this abstract homotopy equivalence does not necessarily extend to families, i.e., the pullback of $\frakF^\sfr$ to the connected component of $\Phi_0$ does not necessarily factor through a point.
\end{rem}

\begin{proof}
A natural first step is to define what a ``suitable choice of basepoint $\theta$'' is in the statement of the current proposition. We see $T^*Q_i$ has a natural choice of stable $\bbR$-polarization given by the horizontal distribution, i.e., 
    \begin{equation}
    T(T^*Q_i)\cong\pi^*_iTQ_i\otimes_\bbR\underline{\bbC},
    \end{equation}
where $\pi_i:T^*Q_i\to Q_i$ is the standard projection. Moreover, the 0-section $\calO_i\subset T^*Q_i$ has a natural choice of framed brane structure with respect to this stable $\bbR$-polarization, i.e., $T\calO_i$ is homotopic to $\pi^*_iTQ_i$ through totally real subbundles of $T(T^*Q_i)$ via the constant homotopy. As in \cite[Subsection 7.3]{Bla24}, the stable $\bbR$-polarizations on $T^*Q_i$ naturally glue together to a stable $\bbR$-polarization on the plumbing $M$ and $(L_0,L_1)$ has a natural framed brane structure with respect to this stable $\bbR$-polarization; the corresponding $\theta\in\mas$ is our suitable choice of basepoint.

Now, we will investigate a bit of the behavior of the map from Lemma \ref{lem:BGL}. First, consider the map induced by freeness of the James construction $\Omega \Sigma C$ as an $\mathbb E_1$-group:
    \begin{equation}
    \Psi:\map_*(C,BGL_1\bbS)\to\map_*(\Omega\Sigma C,BGL_1\bbS).
    \end{equation}
Recall that, alternatively, $\Psi$ is obtained by (1) suspension-loop adjunction and (2) looping. There is a standard way of topologically embedding $C$ into $\Omega\Sigma C$; for any $c\in C$, we may define the loop 
    \begin{align}
    \gamma_c:[0,1]&\to\Sigma C\equiv\big(C\times[0,1]\big)/\sim \\
    t&\mapsto (c,t). \nonumber
    \end{align}
By unraveling the suspension-loop adjunction and looping, we see that 
    \begin{equation}
    \big(\Psi(\Phi)\big)(\gamma_c)=\Phi\vert_c.
    \end{equation}
A formal way of seeing this is that $\Omega \Sigma C$ is the free $\mathbb E_1$-algebra on $C$, so of course the induced map $\Omega \Sigma C \to BGL_1\bbS$ extends the original $C \to BGL_1\bbS$. Either way, in particular the image of the constant path at $c$ (which is a path connecting $L_0$ to $L_1$ that we will still denote by $c$) under the map
    \begin{equation}
    \scrP(L_0,L_1)\to\Omega\Sigma C
    \end{equation}
is $\gamma_c$; hence, the image of $c$ under the composition
    \begin{equation}\label{eq:compaux}
    \scrP(L_0,L_1)\to\Omega\Sigma C\xrightarrow{\Phi}BGL_1\bbS
    \end{equation}
is $\Phi\vert_c$. By Lemma \ref{lem:hofib}, the sum of $F_\theta$ and the postcomposition of \eqref{eq:compaux} with $BGL_1\bbS\to\mathrm{hofib}B^2J$ is equivalent to a choice of null-homotopy of the composition 
    \begin{equation}
    \scrP(L_0,L_1)\to B^2O\xrightarrow{B^2J}B^2GL_1\bbS.
    \end{equation}
After looping, this gives a choice of null-homotopy of the composition
    \begin{equation}\label{eq:compaux2}
    \Omega_{c,c'}\scrP(L_0,L_1)\simeq\Omega\scrP(L_0,L_1)\to BO\xrightarrow{BJ}BGL_1\bbS.
    \end{equation}
Thus, we have a choice of null-homotopy of \eqref{eq:compaux2} precomposed with the obvious map
    \begin{equation}
    \Omega_{c,c'}C\to\Omega_{c,c'}\scrP(L_0,L_1);
    \end{equation}
in particular, we see that this choice of null-homotopy is a homotopy from the composition
    \begin{equation}
    \Omega_{c,c'}C\to\Omega_{c,c'}\scrP(L_0,L_1)\simeq\Omega\scrP(L_0,L_1)\to BO\xrightarrow{BJ}BGL_1\bbS
    \end{equation}
to the constant map
    \begin{align}
    \Omega_{c,c'}C&\to BGL_1\bbS \\
    \Gamma_{c,c'}&\mapsto\Phi\vert_c-\Phi\vert_{c'}. \nonumber
    \end{align}

We finish the proof in the following manner. Let $\scrU_C\subset L_0$ be a tubular neighborhood of $C$. \cite[Section 7.3]{Bla24} shows that $L_1$, near $C$, can be identified with 
    \begin{equation}
    \big\{(p,df_p)\big\}\subset T^*\scrU_C,
    \end{equation}
where $f\in C^\infty(\scrU_C)$ is a smooth function with a global minimum along $C$ and no other critical points. The upshot is that, after Morsifying $f$ into $f'$ via a $C^2$-small perturbation, Subsection 6.2 of \emph{loc. cit.} combined with Proposition \ref{prop:sfrfr} shows we have a homotopy equivalence 
    \begin{equation}
    \frakF^{H,J,\theta}\simeq\Sigma^\infty_+C\wedge\frakR^\sfr
    \end{equation}
for any choice of regular admissible pair $(H,J)$. This follows because (1) the Floer trajectories, for a suitable choice of $(H,J)$, are actually in bijection with Morse trajectories of $f'$ and (2) the linearization of the Floer equation resp. a Floer abstract cap, in this situation, is intimately related to the linearization of the negative gradient flow equation resp. a Morse abstract cap; this essentially goes back to Floer \cite{Flo88c}. A suitable modification of this (i.e., accounting for the twist in the framing determined by $\Phi\in\map_*(C,BGL_1\bbS)$) shows we have a homotopy equivalence 
    \begin{equation}
    \frakF^{H,J,\theta_\Phi}\simeq C^\Phi\wedge\frakR^\sfr,
    \end{equation}
where $\theta_\Phi$ is the image of $\Phi$ under the map from Lemma \ref{lem:BGL}.\footnote{Again, this can also be shown in the Cohen-Jones-Segal framework, cf. \cite[Theorem 5.9]{Bon24} and \cite[Theorem 3.11]{CK23}.} This follows because, in the definition of the framing for $\bbF^{H,J,\theta_\Phi}$, we considered the map 
    \begin{equation}
    \bbF^{H,J}(x,y)\to\Omega_{x,y}\scrP(L_0,L_1),
    \end{equation}
and the observation that the Floer trajectories of $(H,J)$ are in bijection with the Morse trajectories of $f'$ implies we have a factorization of this map:
    \begin{equation}
    \bbF^{H,J}(x,y)\to\Omega_{x(0),y(0)}C\to\Omega_{x,y}\scrP(L_0,L_1).
    \end{equation}
But, we have just investigated how $\theta_\Phi$ behaves on the image of the second map. In particular, the spherical framing for $\bbF^{H,J,\theta_\Phi}$ is given by an isomorphism of real virtual bundles of the form
    \begin{equation}\label{eq:coherentisoaux}
    T\bbF^{H,J}(x,y)+\underline{\bbR}+\underline{\bbR}^{\mu(y)}+\underline{\Phi}\vert_{y(0)}\cong\underline{\bbR}^{\mu(x)}+\underline{\Phi}\vert_{x(0)}+I(x,y),
    \end{equation}
where $I(x,y)\to \bbF^{H,J}(x,y)$ is a virtual rank 0 real virtual bundle together with a trivialization as a stable spherical fibration, which is compatible with gluing. But, after using the identification between the Floer trajectories of $(H,J)$ and the Morse trajectories of $f'$, this is precisely the spherical framing for the Morse flow category of $f'$ twisted by $\Phi$; the proposition follows.
\end{proof}

\section{Proof of main result}
\subsection{Preliminary considerations}
In this section, we will work in the case that $M$ is the plumbing of two cotangent bundles $T^*Q_i$ along $NC$. In Proposition \ref{prop:pullback}, we have identified the $\infty$-functor 
    \begin{equation}
    \map_*(C,BGL_1\bbS)\to\mas^\theta\xrightarrow{\frakF^\sfr}\mathrm{Mod}_{\frakR^\sfr}
    \end{equation}
with a more familiar one. Let $\map_*(C,BGL_1\bbS)_0$ be the connected component of the trivial map, i.e., the connected component of the trivial stable spherical fibration $C\otimes\bbS$. As noted in Remark \ref{rem:pullback}, the $\infty$-functor 
    \begin{multline}\label{eq:non-zeroaux1}
    \map_*(C,BGL_1\bbS)_0\to\map_*(C,BGL_1\bbS)\to\mas^\theta\xrightarrow{\frakF^\sfr}\mathrm{Mod}_{\frakR^\sfr}
    \end{multline}
is identified with the $\infty$-functor 
    \begin{equation}\label{eq:non-zeroaux2}
    \th^\sfr:\map_*(C,BGL_1\bbS)_0\to B\aut\big(\Sigma^\infty_+C\wedge\frakR^\sfr\big),
    \end{equation}
which, on 0-simplices, sends $\Phi$ to $C^\Phi\wedge\frakR^\sfr$. Here, $B\aut\big(\Sigma^\infty_+C\wedge\frakR^\sfr\big)\subset\mathrm{Mod}_{\frakR^\sfr}$ is the full $\infty$-subgroupoid of $\frakR^\sfr$-modules which are homotopy equivalent to $\Sigma^\infty_+C\wedge\frakR^\sfr$. The purpose of this subsection is to prove the following result.

\begin{prop}\label{prop:non-zero}
Suppose $C$ is: connected, orientable, and positive-dimensional; then \eqref{eq:non-zeroaux1} does not factor through a point.
\end{prop}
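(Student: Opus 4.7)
The plan is to use Proposition \ref{prop:pullback} to reduce the question to one about Thom spectra, and then detect non-triviality at a sufficiently large prime using the orientability hypothesis on $C$. By Proposition \ref{prop:pullback} and Remark \ref{rem:pullback}, the $\infty$-functor \eqref{eq:non-zeroaux1} is identified with the Thom spectrum functor $\th^\sfr$ of \eqref{eq:non-zeroaux2}, which lands in the connected $\infty$-groupoid $B\aut(\Sigma^\infty_+ C \wedge \frakR^\sfr)$; hence $\th^\sfr$ factors through a point if and only if $\pi_k(\th^\sfr)$ is trivial for every $k \geq 1$. The goal is therefore to produce a $k$ together with a class in $\pi_k(\map_*(C, BGL_1\bbS)_0)$ whose image is a nontrivial (higher) self-equivalence of $\Sigma^\infty_+ C \wedge \frakR^\sfr$.

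I first describe both sides on $\pi_k$. By loop-suspension adjunction, $\pi_k(\map_*(C, BGL_1\bbS)_0) \cong [\Sigma^{k-1}C, GL_1\bbS]_*$, and the Atiyah--Hirzebruch spectral sequence for $gl_1\bbS$-cohomology (using $\pi_m(gl_1\bbS) \cong \pi_m^s$ for $m \geq 1$) gives contributions from $H^n(C; \pi_m^s)$ with $m - n = k - 1$. Unwinding the Thom spectrum construction, a class $\Phi \in [\Sigma^{k-1}C, GL_1\bbS]_*$ is sent under $\pi_k(\th^\sfr)$ to the higher automorphism of $\Sigma^\infty_+ C \wedge \frakR^\sfr$ induced by the diagonal $\Delta: C \to C \times C$ and the $\bbE_\infty$-action of $GL_1\bbS$ on $\bbS$; in particular, after smashing with $H\bbF_p$, this acts on $H_*(\Sigma^\infty_+ C \wedge \frakR^\sfr; \bbF_p)$ as the identity plus a cap-product shift by the representing class of $\Phi$.

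To construct a nontrivial class I exploit the hypotheses on $C$: connectedness, orientability and positive-dimensionality together ensure that $H^{\dim C}(C; \bbZ_{(p)})$ contains a distinguished nonzero $\bbZ_{(p)}$-summand generated by the Poincar\'e dual of the fundamental class for every sufficiently large prime $p$. For any such $p$, pick a nontrivial element $\alpha \in \pi_m^s \otimes \bbZ_{(p)}$ with $m$ positive (e.g., the Adams class $\alpha_1 \in \pi_{2p-3}^s$ for $p$ odd) and form the composition
$$\Sigma^{k-1}C \to S^{\dim C + k - 1} \to GL_1\bbS,$$
where the first map is the $(k-1)$-fold suspension of the top-cell collapse (nontrivial on top cohomology by orientability), the second represents $\alpha$, and $k := m - \dim C + 1$ is positive for $p$ large. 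This gives a class $\phi \in \pi_k(\map_*(C, BGL_1\bbS)_0)_{(p)}$ surviving to the $E_\infty$-page of the AHSS on dimensional grounds.

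The main obstacle --- which I expect to be the most delicate step --- is verifying that $\pi_k(\th^\sfr)(\phi)$ is nontrivial. By the description of the map above, this reduces to showing that the cap-product shift by the representing class of $\phi$ is nonzero on $H_*(\Sigma^\infty_+ C \wedge \frakR^\sfr; \bbF_p)$, which should follow from (i) $\alpha \neq 0$ in $\pi_m^s \otimes \bbF_p$ by construction, and (ii) the non-degeneracy of the mod-$p$ pairing between the fundamental class of $C$ and its Poincar\'e dual guaranteed by orientability together with our choice of $p$. Once this verification is carried out, $\th^\sfr$ does not factor through a point, completing the proof.
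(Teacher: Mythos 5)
Your plan shares the paper's high-level skeleton --- reduce to the Thom spectrum functor via Proposition \ref{prop:pullback}, split off an off-diagonal piece using the basepoint $c_0$, and construct a test class from the top-cell collapse paired with a $p$-torsion stable stem --- but the detection step, which you yourself flag as ``the most delicate,'' does not work as stated.

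The issue is that you propose to detect $\pi_k(\th^\sfr)(\phi)$ by the cap-product shift it induces on $H_*(\Sigma^\infty_+ C \wedge \frakR^\sfr;\bbF_p)$. Unwinding the block decomposition coming from $\Sigma^\infty_+ C \simeq \Sigma^\infty C \vee \bbS$, the off-diagonal part of $\pi_k(\th^\sfr)(\phi)$ is controlled by a map $\varphi:\Sigma^\infty C \to \Sigma^{1-k}\bbS$ obtained (in your construction) by smashing the top-cell collapse with a torsion class like $\alpha_1 \in \pi_{2p-3}^s$. Such a class is invisible to $\bbF_p$-homology: the Hurewicz map $\pi_m^s \to H_m(\bbS;\bbF_p)$ is zero for $m>0$, so $\alpha_1 \neq 0$ in $\pi_m^s\otimes\bbF_p$ does \emph{not} imply $\alpha_1$ has nonzero image in $H_*(\bbS;\bbF_p)$. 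Equivalently, since your $\Phi$ lies in the identity component of $\map_*(C,BGL_1\bbS)$, its composite with $BGL_1\bbS\to BGL_1 H\bbF_p$ lands in the (contractible) identity component of the target, so the entire twist trivializes after smashing with $H\bbF_p$ and cannot produce a nonzero cap-product correction there. Your item (i) conflates $\pi_m^s\otimes\bbF_p$ with $H_m(\bbS;\bbF_p)$, which is where the argument breaks.

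The paper instead detects the off-diagonal block inside $\mathrm{Mod}_{\frakR^\sfr}$ directly, not via $H\bbF_p$: Corollary \ref{cor:identity} identifies $\mathrm{t.r.}\circ\Omega\th$ with the identity on $\Omega^\infty(\frakD\Sigma^\infty C)$, reducing the problem to showing that the base-change map $\map(\Sigma^\infty C,\Sigma^j\bbS)\to\mathrm{Mod}_{\frakR^\sfr}(\Sigma^\infty C\wedge\frakR^\sfr,\Sigma^j\frakR^\sfr)$ is nonzero for infinitely many negative $j$. This is Theorem \ref{thm:appendixb}, whose engine is the Pontryagin--Thom splitting of Lemma \ref{lem:appendixinjective}, namely that $\pi_*\frakR^\fr\to\pi_*\frakR^\sfr$ is split injective --- a structural feature of $\frakR^\sfr$ with no analogue over $H\bbF_p$. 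Your use of Theorem \ref{thm:appendixa} to extract a $p$-local sphere summand from $\Sigma^\infty C$ is exactly right; the fix is to pair that summand against $\frakR^\sfr$ via the Pontryagin--Thom splitting rather than against $H\bbF_p$-homology.
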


Clearly, proving the proposition is equivalent to proving \eqref{eq:non-zeroaux2} does not factor through a point. Moreover, it is straightforward to see that \eqref{eq:non-zeroaux2} is identified with the composition of $\infty$-functors
    \begin{equation}
    \map_*(C,BGL_1\bbS)_0\xrightarrow{\th}B\aut(\Sigma^\infty_+C)\xrightarrow{(\cdot)\wedge\frakR^\sfr}B\aut\big(\Sigma^\infty_+C\wedge\frakR^\sfr\big),
    \end{equation}
where $\th$ is the $\infty$-functor which, on 0-simplices, sends $\Phi$ to $C^\Phi$ and $B\aut\big(\Sigma^\infty_+C)\subset\spectra$ is the full $\infty$-subgroupoid of $\spectra$ which are homotopy equivalent to $\Sigma^\infty_+C$. First, we will investigate $\th$. Since $C$ is pointed at $c_0$, we have a canonical splitting 
    \begin{equation}
    \Sigma^\infty_+C\simeq\Sigma^\infty C\vee\bbS.
    \end{equation}
Consider the automorphism
    \begin{equation}
    \Omega\th(\phi):\Sigma^\infty C\vee\bbS\to\Sigma^\infty C\vee\bbS,
    \end{equation}
where
    \begin{equation}
    \phi\in\map_*(C,GL_1\bbS)\simeq\Omega\map_*(C,BGL_1\bbS)_0;
    \end{equation}
note, we have that 
    \begin{equation}
    \phi(c_0)=(\identity:\bbS\to\bbS).
    \end{equation}
We may write $\Omega\th(\phi)$ as a ``block matrix'' in the ``basis'' $\Sigma^\infty C\vee\bbS$:
    \begin{equation}
    \begin{bmatrix}
    \aut(\bbS) & \map(\Sigma^\infty C,\bbS) \\
    \map(\bbS,\Sigma^\infty C) & \aut(\Sigma^\infty C)
    \end{bmatrix}
    .
    \end{equation}
Again, since $C$ is pointed at $c_0$, we have a retraction
    \begin{equation}
    *\vee\bbS\to\Sigma^\infty C\vee\bbS\to*\vee\bbS.
    \end{equation}
In particular, we have that (1) $\Omega\th(\phi)$ will preserve the subspectrum $*\vee\bbS$ and (2) the top-left entry of $\Omega\th(\phi)$'s block matrix is the identity, i.e., $\Omega\th(\phi)$ is of the form
    \begin{equation}
    \begin{bmatrix}
    \identity & \map(\Sigma^\infty C,\bbS) \\
    0 & \aut(\Sigma^\infty C)
    \end{bmatrix}
    .
    \end{equation}
We define the map
	\begin{equation}
	\aut\big(\Sigma^\infty_+C\big)\xrightarrow{\mathrm{t.r.}}\map(\Sigma^\infty C,\bbS)
	\end{equation}
via taking the top-right entry of a block matrix.

\begin{lem}
Suppose $\varphi\in\map(\Sigma^\infty C,\bbS)$, then there exists a canonical choice of $\phi\in\map_*(C,GL_1\bbS)$ such that $\Omega\th(\phi)$ is of the form
    \begin{equation}
    \begin{bmatrix}
    \identity & \varphi \\
    0 & \aut(\Sigma^\infty C)
    \end{bmatrix}
    .
    \end{equation}
\end{lem}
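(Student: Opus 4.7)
The plan is to define $\phi$ from $\varphi$ by passing through the $\Sigma^\infty \dashv \Omega^\infty$ adjunction and the additive $\bbE_\infty$-structure on $\Omega^\infty \bbS$, and then to verify the matrix shape of $\Omega\th(\phi)$ by computing the single composite $\mathrm{proj} \circ \Omega\th(\phi)$, where $\mathrm{proj}: \Sigma^\infty_+ C \to \bbS$ is the canonical collapse. This composite detects both the top-left and top-right entries at once.

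\emph{Construction of $\phi$.} The pointed adjunction supplies $\varphi \leftrightarrow \tilde\varphi: C \to \Omega^\infty \bbS$ with $\tilde\varphi(c_0) = 0$ ($\Omega^\infty \bbS$ pointed at $0$). I would define
    \begin{equation}
    \phi(c) := 1 + \tilde\varphi(c),
    \end{equation}
using the additive $\bbE_\infty$-structure on $\Omega^\infty \bbS$. Since $C$ is connected and $\tilde\varphi(c_0) = 0$, the image of $\phi$ lies in the connected component of $1 \in \Omega^\infty \bbS$, which coincides with $SL_1\bbS \subset GL_1\bbS$, producing a canonical pointed $\phi \in \map_*(C, GL_1\bbS)$ with $\phi(c_0) = 1$.

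\emph{Verification of the matrix shape.} The top-left entry and vanishing of the bottom-left are already established in the excerpt from the pointedness of $\phi$, so only the top-right entry remains. The key input is the formula $\Omega\th(\phi) \circ \iota_c = \iota_c \circ \phi(c)$, where $\iota_c: \bbS \to \Sigma^\infty_+ C$ is the fiber inclusion at $c$; this is a formal consequence of viewing $\Sigma^\infty_+ C$ as the colimit of the constant diagram $\underline{\bbS}$ on $C$ and applying the natural self-transformation given by fiberwise $\phi$-multiplication. Since $\mathrm{proj} \circ \iota_c = \identity_\bbS$, this gives
    \begin{equation}
    \mathrm{proj} \circ \Omega\th(\phi) \circ \iota_c = \phi(c),
    \end{equation}
so the unpointed adjoint $\map(\Sigma^\infty_+ C, \bbS) \cong \map(C, \Omega^\infty \bbS)$ identifies $\mathrm{proj} \circ \Omega\th(\phi)$ with $\phi$ itself (viewed as unpointed via $GL_1\bbS \hookrightarrow \Omega^\infty \bbS$). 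The splitting $\Sigma^\infty_+ C = \Sigma^\infty C \vee \bbS$ corresponds to the evaluation-plus-reduction decomposition $\map(C, \Omega^\infty \bbS) \cong \Omega^\infty \bbS \times \map_*(C, \Omega^\infty \bbS)$, and our $\phi = 1 + \tilde\varphi$ splits as $(1, \tilde\varphi)$. The reduced piece $\tilde\varphi$ is precisely adjoint to $\varphi$ under the pointed adjunction, yielding top-right entry $\varphi$, as required.

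\emph{Main obstacle.} The principal bookkeeping issue is keeping straight the two $\bbE_\infty$-structures on $\Omega^\infty \bbS$ that appear: the \emph{additive} one (used in the adjunction and in writing $\phi = 1 + \tilde\varphi$) and the \emph{multiplicative} one on $GL_1\bbS$ that implicitly governs $\Omega\th$ via $BGL_1\bbS$. The translation $\tilde\varphi \mapsto 1 + \tilde\varphi$ is the crucial bridge between these structures, and is where the Eckmann--Hilton-type interplay enters. A minor but essential technical point is justifying the fiberwise formula $\Omega\th(\phi) \circ \iota_c = \iota_c \circ \phi(c)$ from a working definition of the Thom spectrum functor; this is formal but needs to be spelled out against the colimit description of $\Sigma^\infty_+ C$. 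Finally, outside the connected case (not needed for Theorem \ref{thm:main}), one would have to take care that $\phi(c) = 1 + \tilde\varphi(c)$ can leave $GL_1\bbS$ on components where $\tilde\varphi$ is nontrivial in $\pi_0$; under connectedness of $C$ this issue disappears.
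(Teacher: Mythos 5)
Your proposal constructs $\phi = 1 + \widetilde{\varphi}$ exactly as the paper does and relies on the same key input, namely that $\Omega\th(\phi)$ acts on the fiber $\bbS\vert_c$ by $\phi(c)$, so the approach is essentially the same. Your verification of the top-right entry via the adjunction $\map(\Sigma^\infty_+ C, \bbS)\cong\map(C,\Omega^\infty\bbS)$ and its basepoint splitting is a mild repackaging of the paper's explicit additive-category computation with the $\iota$ and $\pi$ maps, arriving at the same conclusion.
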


\begin{proof}
Since $C$ is connected and $\map(\bbS,\bbS)\simeq\Omega^\infty\bbS$ is pointed at the trivial map, we see that $\varphi$ determines an element
    \begin{equation}
    \widetilde{\varphi}\in\map_*\big(C,\map(\bbS,\bbS)_0\big)\simeq\map(\Sigma^\infty C,\bbS)
    \end{equation}
via suspension-loop adjunction. Moreover, it is known that adding the identity element of an $\bbE_\infty$-ring is a homotopy equivalence from the connected component of the zero element to the connected component of the identity element, i.e., we have
    \begin{equation}
    \identity+\widetilde{\varphi}\in\map_*(C,GL_1\bbS),
    \end{equation}
where we abuse notation and also denote by $\identity:C\to GL_1\bbS$ the pointed map $c\mapsto(\identity:\bbS\to\bbS)$. If we consider the automorphism
    \begin{equation}
    \Omega\th(\identity+\widetilde{\varphi}):\Sigma^\infty_+ C\simeq C\otimes\bbS\to \Sigma^\infty_+C\simeq C\otimes\bbS,
    \end{equation}
then we observe $\Omega\th(\identity+\widetilde{\varphi})$ maps the fiber $\bbS\vert_c$ to the fiber $\bbS\vert_c$ via $\identity+\widetilde{\varphi}(c)$.

We now slightly digress to recall two standard facts about additive categories. 
\begin{enumerate}
\item Let $X_i$ be two objects and $\iota_i$ resp. $\pi_i$ the obvious inclusion resp. projection maps associated to the direct sum $X_0\oplus X_1$. We have that 
    \begin{equation}
    \iota_0\circ\pi_0+\iota_1\circ\pi_1=\identity.
    \end{equation}

\item Let $Y$ be another object. We have that 
    \begin{equation}
    \pi_i^*:\map(X_i,Y)\to\map(X_0\oplus X_1,Y)
    \end{equation}
is a split-injection.
\end{enumerate}

We now return to the proof at hand. We may write 
    \begin{equation}
    \mathrm{t.r.}\circ\Omega\th(\identity+\widetilde{\varphi})
    \end{equation}
as the composition 
    \begin{equation}
    \Sigma^\infty C\vee*\xrightarrow{\iota_{\Sigma^\infty C}}\Sigma^\infty C\vee\bbS\xrightarrow{\identity+\widetilde{\varphi}'}\Sigma^\infty C\vee\bbS\xrightarrow{\pi_\bbS}\bbS, 
    \end{equation}
where the notation $\identity+\widetilde{\varphi}'$ indicates the map acting fiberwise via $\identity+\widetilde{\varphi}(c)$. By item (2) of the digression, it suffices to prove 
    \begin{equation}
    \pi_\bbS\circ(\identity+\widetilde{\varphi}')\circ\iota_{\Sigma^\infty C}\circ\pi_{\Sigma^\infty C}=\varphi\circ\pi_{\Sigma^\infty C}.
    \end{equation}
By item (1) of the digression, we may compute:
    \begin{align}
    \pi_\bbS\circ(\identity+\widetilde{\varphi}')\circ\iota_{\Sigma^\infty C}\circ\pi_{\Sigma^\infty C}&=(\pi_\bbS+\pi_\bbS\circ\widetilde{\varphi}')\circ(\identity-\iota_\bbS\circ\pi_\bbS) \\
    &=\pi_\bbS+\pi_\bbS\circ\widetilde{\varphi}'-\pi_\bbS\circ\iota_\bbS\circ\pi_\bbS-\pi_\bbS\circ\widetilde{\varphi}'\circ\iota_\bbS\circ\pi_\bbS \nonumber \\
    &=\pi_\bbS\circ\widetilde{\varphi}', \nonumber
    \end{align}
where we used the fact (1) the first and third term cancel and (2) $\widetilde{\varphi}'\circ\iota_\bbS=0$ since $\widetilde{\varphi}'(c_0)=0$. Finally, we have the equality 
    \begin{equation}
    \pi_\bbS\circ\widetilde{\varphi}'=\varphi\circ\pi_{\Sigma^\infty C},
    \end{equation}
since $\widetilde{\varphi}$ and $\varphi$ are related via suspension-loop adjunction. We take $\phi=\identity+\widetilde{\varphi}$.
\end{proof}

We will denote the following composition by $\tau$:
    \begin{equation}\label{eq:tau}
    \map_*(C,GL_1\bbS)\xrightarrow{\Omega\th}\aut(\Sigma^\infty_+C)\xrightarrow{\mathrm{t.r.}}\map(\Sigma^\infty C,\bbS)\simeq\Omega^\infty(\frakD\Sigma^\infty C),
    \end{equation}
where $\frakD\Sigma^\infty C$ is the Spanier-Whitehead dual of $\Sigma^\infty C$. Moreover, we consider the homotopy equivalence
    \begin{multline}
    \psi:\map_*(C,GL_1\bbS)\xrightarrow{\sim}\map_*\big(C,\map(\bbS,\bbS)_0\big)\simeq \\
    \map_*(C,\Omega^\infty\bbS)\simeq\map(\Sigma^\infty C,\bbS)\simeq\Omega^\infty(\frakD\Sigma^\infty C),
    \end{multline}
where the first homotopy equivalence comes from subtracting the identity element.

\begin{cor}\label{cor:identity}
The composition 
    \begin{equation}\label{eq:splinj}
    \tau\circ\psi^{-1}:\Omega^\infty(\frakD\Sigma^\infty C)\to\Omega^\infty(\frakD\Sigma^\infty C)
    \end{equation}
is the identity.
\end{cor}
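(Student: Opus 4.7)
The plan is that this corollary follows almost immediately by unwinding the definitions and invoking the lemma just proved. Given $\varphi \in \Omega^\infty(\frakD\Sigma^\infty C) \simeq \map(\Sigma^\infty C,\bbS)$, I would first trace through what $\psi^{-1}(\varphi)$ is: the inverse of the last three homotopy equivalences in the definition of $\psi$ sends $\varphi$ to $\widetilde{\varphi} \in \map_*(C, \map(\bbS,\bbS)_0)$ via suspension–loop adjunction, and then the inverse of the first map (subtracting the identity) is precisely the addition of the identity. Hence
\begin{equation}
\psi^{-1}(\varphi) = \identity + \widetilde{\varphi} \in \map_*(C, GL_1\bbS),
\end{equation}
where $\identity + \widetilde{\varphi}$ is understood exactly as in the lemma just proved, namely the pointed map $c\mapsto \identity + \widetilde{\varphi}(c)$.

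Next, I would apply the preceding lemma verbatim. That lemma constructs, for any $\varphi$, a canonical choice of $\phi \in \map_*(C,GL_1\bbS)$ such that $\Omega\th(\phi)$ has top-right entry equal to $\varphi$; its proof explicitly identifies this canonical $\phi$ as $\identity + \widetilde{\varphi}$. Thus
\begin{equation}
\tau(\psi^{-1}(\varphi)) = \mathrm{t.r.}\bigl(\Omega\th(\identity + \widetilde{\varphi})\bigr) = \varphi,
\end{equation}
which gives $\tau \circ \psi^{-1} = \identity$ on $\pi_0$-level representatives. Since every construction in sight is natural and the same identification works for families (the suspension–loop adjunction, the retract of $\Sigma^\infty_+C$ onto $\bbS$, and the additive-category identities used in the lemma are all functorial), this suffices to conclude the statement at the level of $\infty$-functors.

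There is essentially no obstacle here — the real content was absorbed into the previous lemma, whose proof used the connectedness of $C$ to pass between $\map(\bbS,\bbS)_0$ and $GL_1\bbS$, the additivity identity $\iota_0\pi_0 + \iota_1\pi_1 = \identity$ to extract the top-right block, and the observation $\widetilde{\varphi}' \circ \iota_\bbS = 0$ (since $\widetilde{\varphi}$ is pointed). The only care needed in the corollary itself is to check that $\psi^{-1}$ really is ``add the identity after adjunction''; this is immediate from writing out the arrows comprising $\psi$ in reverse.
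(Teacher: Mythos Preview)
Your proposal is correct and matches the paper's own proof essentially verbatim: the paper also computes $\psi^{-1}(\varphi)=\identity+\widetilde{\varphi}$ and then invokes the lemma to get $\tau(\identity+\widetilde{\varphi})=\mathrm{t.r.}\circ\Omega\th(\identity+\widetilde{\varphi})=\varphi$. Your added remarks about tracing through the adjunctions and about naturality in families are fine elaborations but not logically required beyond what the paper writes.
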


\begin{proof}
We have that 
    \begin{equation}
    \psi^{-1}(\varphi)=\identity+\widetilde{\varphi}.
    \end{equation}
Now, by the lemma, 
    \begin{equation}
    \tau(\identity+\widetilde{\varphi})=\mathrm{t.r.}\circ\Omega\th(\identity+\widetilde{\varphi})=\varphi.
    \end{equation}
\end{proof}

We now return to $\th^\sfr$. Again, we may consider the automorphism 
    \begin{equation}
    \Omega\th^\sfr(\phi):(\Sigma^\infty C\wedge\frakR^\sfr)\vee\frakR^\sfr\to(\Sigma^\infty C\wedge\frakR^\sfr)\vee\frakR^\sfr;
    \end{equation}
moreover, we may write $\Omega\th^\sfr(\phi)$ as a block matrix:
    \begin{equation}
    \begin{bmatrix}
    \aut(\frakR^\sfr) & \mathrm{Mod}_{\frakR^\sfr}(\Sigma^\infty C\wedge\frakR^\sfr,\frakR^\sfr) \\
    \mathrm{Mod}_{\frakR^\sfr}(\frakR^\sfr,\Sigma^\infty C\wedge\frakR^\sfr) & \aut(\frakR^\sfr)
    \end{bmatrix}
    .
    \end{equation}
Consider the composition 
    \begin{multline}
    \map_*(C,BGL_1\bbS)_0\xrightarrow{\Omega\th^\sfr}\aut(\Sigma^\infty C_+\wedge\frakR^\sfr)\xrightarrow{\mathrm{t.r.}} \\
    \mathrm{Mod}_{\frakR^\sfr}(\Sigma^\infty C\wedge\frakR^\sfr,\frakR^\sfr).
    \end{multline}
Clearly, this map is equal to the composition
    \begin{multline}
    \map_*(C,BGL_1\bbS)_0\xrightarrow{\Omega\th}\aut(\Sigma^\infty C_+)\xrightarrow{\mathrm{t.r.}} \\
    \map(\Sigma^\infty C,\bbS)\to\mathrm{Mod}_{\frakR^\sfr}(\Sigma^\infty C\wedge\frakR^\sfr,\frakR^\sfr).
    \end{multline}
By Lemma \ref{cor:identity}, the composition of the first two maps is the identity, i.e., $\mathrm{t.r.}\circ\Omega\th^\sfr$ is equal to the map 
    \begin{equation}
    \map(\Sigma^\infty C,\bbS)\cong\mathrm{Mod}_{\frakR^\fr}(\Sigma^\infty C,\frakR^\fr)\to\mathrm{Mod}_{\frakR^\sfr}(\Sigma^\infty C\wedge\frakR^\sfr,\frakR^\sfr).
    \end{equation}
By Theorem \ref{thm:appendixb}, this map induces a non-zero map of homotopy groups in infinitely many negative degrees if there exists a non-torsion element in $H_*(C;\bbZ)$. The upshot is that $\Omega\th^\sfr$ cannot factor through a point if there exists a non-torsion element in $H_*(C;\bbZ)$; this proves Proposition \ref{prop:non-zero}.

\subsection{Lagrangian intersections}\label{subsec:lagrangianintersections}

We are now ready to prove our main result.

\begin{proof}[Proof of Theorem \ref{thm:main}]
We suppose for contradiction that there exists a compactly supported Hamiltonian $H\in C^\infty(I\times M)$ such that 
    \begin{equation}
    L_0\cap\phi^{-1}_{H_t}(L_1)=\{p_\alpha\}_{\alpha\in A},
    \end{equation}
where each (possibly degenerate) intersection point $p_\alpha$ has the same action. Note, we may assume $A$ is a finite indexing set. Given any $x_\alpha\in\chi(L_0,L_1;H)$, let $\scrU_\alpha\subset M$ be a contractible neighborhood of $x_\alpha$ such that (1) $\scrU_\alpha\cap\scrU_{\alpha'}\neq\emptyset$ if and only if $\alpha=\alpha'$ and (2) $L_i\cap\scrU_\alpha\cong\bbR^{\dim L_i}$. We may assume there exists $f_\alpha\in C^\infty(L_0\cap\scrU_\alpha)$ such that 
    \begin{equation}
    \phi^{-1}_{H_t}(L_1)\cap\scrU_\alpha=\big\{(p,d(f_\alpha)_p\big\}\subset T^*(L_0\cap\scrU_\alpha).
    \end{equation}
Let $J$ be an admissible almost complex structure and $f_\alpha'$ the Morsification of $f_\alpha$ via a $C^2$-small perturbation. In particular, we may perturb $(H,J)$ into $(H',J')$, via a $C^2$-small perturbation compactly supported in a neighborhood of each $x_\alpha$, such that $(H',J')$ satisfies the following properties:
\begin{itemize}
\item $H'$ is non-degenerate,
\item each $x'\in\chi(L_0,L_1;H')$ satisfies $x'\in\scrU_\alpha$ for some $\alpha$,
\item $\widetilde{\scrF}^{H',J'}(x',y')\neq\emptyset$ if and only if there exists $\alpha$ such that (1) $x',y'\in\scrU_\alpha$ and (2) every Floer trajectory $u$ connecting $x'$ to $y'$ satisfies $u(Z)\subset\scrU_\alpha$,
\item and $\phi^{-1}_{H_t'}(L_1)\cap\scrU_\alpha=\big\{(p,d(f_\alpha')_p\big\}\subset T^*(L_0\cap\scrU_\alpha)$.
\end{itemize}
In fact, if $\widetilde{\scrF}^{H',J'}(x',y')\neq\emptyset$, then it is naturally identified to the moduli space of Morse trajectories connecting two critical points of $f_\alpha'$. The upshot is that the Floer theory of $(L_0,L_1)$ is entirely local, i.e., it reduces to the Morse theory of each $f_\alpha'$. Without loss of generality, we may assume there exists a path $\Gamma_\alpha$ connecting the constant path at $c_0$ to $x_\alpha$.\footnote{This is because the Floer homotopy type splits as a wedge sum of Floer homotopy types over the various components of $\scrP(L_0,L_1)$. Since $L_0$ intersects $L_1$ cleanly in $C$ (which is connected), it immediately follows that the Floer homotopy type over every component of $\scrP(L_0,L_1)$, besides the component containing $c_0$, is the trivial spectrum.} Now, by our discussion above, the map 
    \begin{equation}
    \bbF^{H',J'}(x',y')\to\Omega_{x',y'}\scrP(L_0,L_1)\simeq\Omega\big(\scrP(L_0,L_1),c_0\big),
    \end{equation}
where $\Omega\big(\scrP(L_0,L_1),c_0\big)$ indicates this based loop space is pointed at $c_0$, factors, up to homotopy, as the composition: 
    \begin{multline}\label{eq:factorizationaux}
    \bbF^{H',J'}(x',y')\to\Omega_{x',y'}\scrP(L_0\cap\scrU_a,L_1\cap\scrU_\alpha)\simeq \\
    \Omega\big(\scrP(L_0\cap\scrU_a,L_1\cap\scrU_\alpha),x_\alpha\big)\to\Omega\big(\scrP(L_0,L_1),x_\alpha\big)\to\Omega\big(\scrP(L_0,L_1),c_0\big),
    \end{multline}
where the last map is the obvious map given via conjugation by $\Gamma_\alpha$. We observe that, since $\scrU_\alpha$ and $L_i\cap\scrU_\alpha$ are contractible, we have a homotopy equivalence
    \begin{equation}
    \Omega\big(\scrP(L_0\cap\scrU_a,L_1\cap\scrU_\alpha),x_\alpha\big)\simeq\{x_\alpha\}.
    \end{equation}
The upshot is that the composition 
    \begin{equation}
    \bbF^{H',J'}(x',y')\to\Omega\big(\scrP(L_0,L_1),c_0\big)\to BO\to BGL_1\bbS
    \end{equation}
is canonically null-homotopic if we only consider null-homotopies of \eqref{eq:assu} in the image of the map 
    \begin{equation}
    \map_*(C,BGL_1\bbS)_0\to\mas^\theta;
    \end{equation}
this is because (1) such null-homotopies are canonically null-homotopic at $c_0$, (2) the path $\Gamma_\alpha$ induces a canonical null-homotopy at $x_\alpha$, and (3) we have the factorization \eqref{eq:factorizationaux}. The upshot is that the $\infty$-functor
    \begin{equation}
    \map_*(C,BGL_1\bbS)_0\to\map_*(C,BGL_1\bbS)\to\mas^\theta\xrightarrow{\frakF^\sfr}\mathrm{Mod}_{\frakR^\sfr}
    \end{equation}
factors through the constant functor 
    \begin{align}
    *&\to\mathrm{Mod}_{\frakR^\sfr} \\
    *&\mapsto\bigvee_{\alpha\in A}\Sigma^\infty\Sigma^{d_\alpha}\calC_{f_\alpha}\wedge\frakR^\sfr, \nonumber
    \end{align}
where $\calC_{f_\alpha}$ is the Conley index at $p_\alpha$ of $f_\alpha$ (i.e., the space whose stable homotopy type is determined by the Morse homotopy type of $f_\alpha'$) and $d_\alpha\in\bbZ$ are integers determined by $\theta$. But this is an immediate contradiction to Proposition \ref{prop:non-zero}.
\end{proof}

\appendix
\section{Stable cohomotopy classes in negative degrees}\label{appendix:lemma}
Recall, a spectrum $\frakX$ is called \emph{Moore} if (1) $\pi_*\frakX$ is bounded below and (2) $H_*(\frakX;\bbZ)$ is concentrated in degree 0. Some standard facts about Moore spectra are the following. 
\begin{itemize}
\item For any abelian group $A$, there exists a Moore spectrum $\frakM A$ such that $H_0(\frakM A;\bbZ)\cong A$.
\item Any two Moore spectra with isomorphic $H_0(-;\bbZ)$ are isomorphic in $\ho\spectra$.
\item If $A_1,\ldots,A_k$ are abelian groups, then 
    \begin{equation}
    \frakM(A_1\oplus\cdots\oplus A_k)\cong\frakM A_1\vee\cdots\vee\frakM A_k
    \end{equation}
in $\ho\spectra$.
\end{itemize}
Some quick examples of Moore spectra are $\frakM\bbZ=\bbS$ and $\frakM\bbZ_{(p)}\cong\bbS_{(p)}$, where $p\in\bbZ$ is a prime number.

\begin{thm}\label{thm:appendixa}
Let $\frakX$ be a finite spectrum. For a sufficiently large prime $p$, the $p$-localization $\frakX_{(p)}\equiv\bbS_{(p)}\wedge\frakX$ becomes isomorphic to a direct sum of shifted Moore spectra in $\ho\spectra$.
\end{thm}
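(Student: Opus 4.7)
The plan is to proceed by induction on the number of cells in a minimal CW structure on $\frakX$, with the key ingredient being Serre's classical calculation that $\pi_n(\bbS_{(p)}) = 0$ for all $0 < n < 2p-3$ (the first $p$-torsion class being $\alpha_1 \in \pi_{2p-3}\bbS$). Fix a CW structure with cells lying in some interval $[a,b]$, set $d \equiv b - a$, and choose a prime $p$ with $2p - 3 > d$. Serre's result then gives $\pi_n(\bbS_{(p)}) = 0$ for $0 < n \le d$, and the long exact sequence of the defining cofiber sequence $\bbS_{(p)} \xrightarrow{p^r} \bbS_{(p)} \to \frakM(\bbZ/p^r)$ yields $\pi_n(\frakM(\bbZ/p^r)) = 0$ for $0 < n < d$ and every $r \ge 1$.

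The base case is immediate: a one-cell spectrum is $\Sigma^n\bbS$, whose $p$-localization $\Sigma^n\bbS_{(p)} = \Sigma^n\frakM\bbZ_{(p)}$ is a shifted Moore spectrum. For the inductive step, I would strip off a top-dimensional cell to obtain a cofiber sequence $\Sigma^{n_k-1}\bbS \xrightarrow{\phi} \frakX_{k-1} \to \frakX$. By induction, $(\frakX_{k-1})_{(p)} \simeq \bigvee_j\Sigma^{m_j}\frakM A_j$ with each $A_j \in \{\bbZ_{(p)}\}\cup\{\bbZ/p^r\}_{r\ge 1}$ and every $m_j \in [a,b-1]$. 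Decomposing
\[\phi_{(p)} \in \pi_{n_k-1}\bigl((\frakX_{k-1})_{(p)}\bigr) \cong \bigoplus_j\pi_{n_k-1-m_j}(\frakM A_j),\]
the vanishing range above forces every component with $m_j\ne n_k-1$ to be zero, so $\phi_{(p)}$ factors through the sub-wedge of summands living in dimension exactly $n_k-1$.

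To conclude, I would apply a Smith-normal-form argument: the $\ho\spectra$-automorphism group of $\bigvee_{m_j=n_k-1}\Sigma^{n_k-1}\frakM A_j$ acts on $\bigoplus A_j$, and using the identification $[\frakM A,\frakM A'] \cong \mathrm{Hom}_{\bbZ_{(p)}}(A,A')$ valid in this narrow degree range (again a consequence of the same vanishing), this action is rich enough to move $\phi_{(p)}$ into a single summand. The cofiber of $\Sigma^{n_k-1}\bbS_{(p)} \to \Sigma^{n_k-1}\frakM A$ for $A\in\{\bbZ_{(p)},\bbZ/p^s\}$ is then identified by a direct octahedral-axiom computation as a wedge of shifted Moore spectra (multiplication by $p^r$ on $\bbS_{(p)}$ gives $\frakM(\bbZ/p^r)$; a $p^t$-divisible map into $\frakM(\bbZ/p^s)$ produces an extension that splits via the recalled decomposition of $\frakM$ on direct sums). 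Combined with the surviving wedge factors of $(\frakX_{k-1})_{(p)}$, this completes the induction.

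The main obstacle is the Smith-normal-form step: one must verify that the $\ho\spectra$-endomorphism ring of a wedge of shifted Moore spectra acts on the relevant homotopy group like a bona fide matrix ring over $\bbZ_{(p)}$, so that an arbitrary element can be brought into a single coordinate. This reduces to a careful analysis of $[\frakM A,\frakM A']$ via the defining cofiber sequences, coupled with the low-degree vanishing of the homotopy of Moore spectra established above; the computation is elementary but demands bookkeeping, especially when both $A$ and $A'$ are $p$-torsion of differing exponents.
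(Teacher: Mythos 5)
Your proof takes a genuinely different route from the paper's: you induct on cells and try to split off the effect of each attaching map, while the paper proves a general lemma that for large $p$ the category of $p$-local finite spectra is controlled by the derived category of $\bbZ_{(p)}$-modules. Concretely, the paper shows $\ho\spectra(\frakX_{(p)},\frakY_{(p)})\cong R\hom\big(H_*(\frakX;\bbZ_{(p)}),H_*(\frakY;\bbZ_{(p)})\big)$ using the connectivity of the cofiber of $\bbS_{(p)}\to H\bbZ_{(p)}$, then induction-restriction and stable Dold--Kan. Once that is known, the splitting of a bounded complex over a PID into its homology produces idempotents of $\frakX_{(p)}$ in $\ho\spectra$, and additivity of $\ho\spectra$ gives the wedge decomposition. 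The virtue of this route is that it sidesteps exactly the bookkeeping you flag as the ``main obstacle''.

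Your Smith-normal-form step has a genuine gap, and it is the conceptual one, not just a matter of verification. The endomorphism ring of a wedge $\bigvee_j\Sigma^{n_k-1}\frakM A_j$ does identify with $\mathrm{End}_{\bbZ_{(p)}}(\bigoplus_j A_j)$ in the relevant degree, but this is \emph{not} a full matrix ring over $\bbZ_{(p)}$ once the $A_j$ are not all free: the entry $\mathrm{Hom}(\bbZ/p^r,\bbZ_{(p)})$ vanishes, and Smith normal form fails. A concrete counterexample: take $\bigoplus_j A_j=\bbZ_{(p)}\oplus\bbZ/p$ and $\phi_{(p)}=(p,1)$. Every automorphism is a lower-triangular matrix $\begin{pmatrix}u&0\\c&v\end{pmatrix}$ with $u\in\bbZ_{(p)}^\times$, $v\in(\bbZ/p)^\times$, $c\in\bbZ/p$, and it sends $(p,1)$ to $(up,v)$, which is never supported in a single coordinate. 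Thus $\phi_{(p)}$ genuinely cannot be moved into a single summand, and your inductive step, as written, does not go through even though the cofiber here is in fact a shifted Moore spectrum (one computes $H_0\cong\bbZ/p^2$, $H_1=0$). Even bypassing Smith normal form and attacking the cofiber $C'$ of $\phi_{(p)}$ directly, $H_*(C')$ can sit in two adjacent degrees (the cokernel of $\phi_{(p)}$ in degree $n_k-1$ and the kernel $\cong\bbZ_{(p)}$ in degree $n_k$), and showing $C'$ actually splits as a wedge is not a consequence of the isomorphism $\frakM(A_1\oplus A_2)\cong\frakM A_1\vee\frakM A_2$ — that describes Moore spectra, not extensions between them. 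Producing the splitting map here needs something like the $R\hom$ lemma from the paper or a direct construction via the Hurewicz theorem; at that point you are essentially rederiving the paper's argument for a two-cell complex, so the cleaner move is to invoke it globally.
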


We begin with a preliminary lemma. 

\begin{lem}
Let $\frakX$ and $\frakY$ be two finite spectra. For any sufficiently large prime $p$, we have a chain of isomorphisms
    \begin{equation}
    \ho\spectra(\frakX,\frakY)_{(p)}\cong\ho\spectra\big(\frakX_{(p)},\frakY_{(p)}\big)\cong R\hom\big(H_*(\frakX;\bbZ_{(p)}),H_*(\frakY;\bbZ_{(p)})\big),
    \end{equation}
where $R\hom$ denotes the derived hom in the derived category of $\bbZ_{(p)}$-modules.
\end{lem}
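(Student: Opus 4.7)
The plan is to establish the two isomorphisms independently. For the first, I would invoke that $\frakX$ is finite and hence strongly dualizable. This gives an equivalence $F(\frakX,\frakY) \simeq \frakD\frakX \wedge \frakY$, where $F$ is the mapping spectrum and $\frakD$ is the Spanier--Whitehead dual. Since $p$-localization is smashing with $\bbS_{(p)}$, which commutes with the smash product, $F(\frakX,\frakY)_{(p)} \simeq \frakD\frakX \wedge \frakY_{(p)}$; because $\frakY_{(p)}$ is already $p$-local, this further identifies with $\frakD(\frakX_{(p)}) \wedge \frakY_{(p)} \simeq F(\frakX_{(p)}, \frakY_{(p)})$, as the Spanier--Whitehead dual commutes with localization on finite spectra.

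For the second isomorphism, the essential input is Serre's classical vanishing $\pi_n\bbS_{(p)} = 0$ for $0 < n < 2p-3$. Let $d$ denote an upper bound on the ``spread'' of the integral homology of $\frakX$ and $\frakY$, i.e., the maximum difference between degrees in which either spectrum has nonzero homology; since both spectra are finite, $d$ is finite. I would choose $p$ large enough that $2p-3$ comfortably exceeds $d$.

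The argument then proceeds via the Atiyah--Hirzebruch spectral sequence
\[
E_2^{s,t} = H^s(\frakX_{(p)}; \pi_t\frakY_{(p)}) \Longrightarrow \ho\spectra(\Sigma^{t-s}\frakX_{(p)}, \frakY_{(p)}).
\]
A cellular induction on $\frakY_{(p)}$ (using the vanishing of positive-degree $p$-local stable stems in the range of interest) identifies $\pi_t\frakY_{(p)} \cong H_t(\frakY;\bbZ_{(p)})$: higher attaching maps lie in vanishing stems, so $\frakY_{(p)}$ assembles only from ``nearest-neighbour'' attachments, realising its integral homology as its homotopy. Universal coefficients then rewrites the $E_2$-page as $\hom(H_s(\frakX;\bbZ_{(p)}), H_t(\frakY;\bbZ_{(p)})) \oplus \mathrm{Ext}^1(H_{s-1}(\frakX;\bbZ_{(p)}), H_t(\frakY;\bbZ_{(p)}))$, which summed over bidegrees recovers exactly the cohomology of $R\hom(H_*(\frakX;\bbZ_{(p)}), H_*(\frakY;\bbZ_{(p)}))$. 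The higher differentials $d_r$ ($r \geq 2$) are encoded by secondary and higher cohomology operations factoring through $p$-local stable stems of positive degree in the vanishing range, hence vanish; the spectral sequence collapses at $E_2$, yielding the second identification.

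The main obstacle is the degree-counting bookkeeping: one must verify carefully that the vanishing range $2p-3$ of $p$-local stable stems covers every obstruction encountered, both in the cellular computation of $\pi_*\frakY_{(p)}$ and in the Atiyah--Hirzebruch differentials, across all relevant bidegrees. None of these steps is deep in isolation, but pinning down the threshold on $p$ which simultaneously kills every obstruction requires care.
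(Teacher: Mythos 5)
Your route is genuinely different from the paper's, and while it is morally plausible, it has real gaps that the paper's argument avoids. You build the second isomorphism from the Atiyah--Hirzebruch spectral sequence for $\ho\spectra(\frakX_{(p)},\frakY_{(p)})$ together with Serre's vanishing range for $\pi_*\bbS_{(p)}$. The paper instead works at the level of a single cofiber sequence: since the first $p$-torsion in $\pi_*\bbS$ appears in degree $2p-3$, the unit $\bbS_{(p)}\to H\bbZ_{(p)}$ has $(2p-4)$-connected cofiber $\frakQ$; for $p$ large, $\Sigma^{-1}\frakQ\wedge\frakY_{(p)}$ has connectivity exceeding the top cell of $\frakX$, so the long exact sequence in $\ho\spectra(\frakX_{(p)},-)$ collapses to give $\ho\spectra(\frakX_{(p)},\frakY_{(p)})\cong\ho\spectra(\frakX_{(p)},H\bbZ_{(p)}\wedge\frakY_{(p)})$. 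The induction--restriction adjunction then identifies this with $\ho\mathrm{Mod}_{H\bbZ_{(p)}}(H\bbZ_{(p)}\wedge\frakX_{(p)},H\bbZ_{(p)}\wedge\frakY_{(p)})$, stable Dold--Kan converts to $R\hom(C_*(\frakX;\bbZ_{(p)}),C_*(\frakY;\bbZ_{(p)}))$, and finally the PID fact that a bounded complex of finitely generated free modules is quasi-isomorphic to its homology finishes the argument. One connectivity estimate does all the work; there is no spectral sequence, no differential analysis, and no extension problem.

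Two concrete issues with your proposal. First, the assertion $\pi_t\frakY_{(p)}\cong H_t(\frakY;\bbZ_{(p)})$ is false as a statement about all $t$: already for $\frakY=\bbS$ one has $\pi_{2p-3}\bbS_{(p)}=\bbZ/p$ while $H_{2p-3}(\bbS;\bbZ_{(p)})=0$. The isomorphism is only valid in the range running from the bottom cell of $\frakY$ up to roughly $2p-4$ beyond it, and you would need to verify that the bidegrees $(s,t)$ with $t=s$ actually contributing to the AHSS lie entirely in that window. Second, after collapse at $E_2$, the spectral sequence computes only the associated graded of a filtration on $\ho\spectra(\frakX_{(p)},\frakY_{(p)})$; identifying this with $\pi_0 R\hom$ (which by the UCT over $\bbZ_{(p)}$ is itself an extension of $\hom$ by $\mathrm{Ext}^1$) requires resolving the extension problem, which you do not address. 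Your own closing paragraph flags the bookkeeping as the main obstacle, and it is indeed where the argument as written is incomplete. The paper's proof sidesteps all of this by producing a direct chain of isomorphisms rather than an associated graded; if you want to keep the AHSS framework you will need both the explicit range computation and an extension argument (for instance via naturality or by working one prime-power at a time), at which point the cofiber-sequence route is simply more economical.
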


\begin{proof}
Recall, for odd $p$, the first $p$-torsion element in $\pi_*\bbS$ appears in degree $2p-3$; hence, the map 
    \begin{equation}
    \bbS_{(p)}\to H\bbZ_{(p)}
    \end{equation}
is an isomorphism on homotopy groups in degree at most $2p-4$ and its cofiber $\frakQ$ is $(2p-4)$-connected. Consider the long exact sequence 
    \begin{multline}
    \cdots\to\ho\spectra\big(\frakX_{(p)},\Sigma^{-1}\frakQ\wedge\frakY_{(p)}\big)\to\ho\spectra\big(\frakX_{(p)},\frakY_{(p)}\big)\to \\
    \ho\spectra\big(\frakX_{(p)},H\bbZ_{(p)}\wedge\frakY_{(p)}\big)\to\ho\spectra\big(\frakX_{(p)},\frakQ\wedge\frakY_{(p)}\big)\to\cdots.
    \end{multline}
For sufficiently large $p$, $\Sigma^{-1}\frakQ\wedge\frakY_{(p)}$ has connectivity larger than the dimension of the top cell of $\frakX$, therefore the first and last term in the portion of the long exact sequence above vanish, i.e., we have an isomorphism
    \begin{equation}
    \ho\spectra\big(\frakX_{(p)},\frakY_{(p)}\big)\cong\ho\spectra\big(\frakX_{(p)},H\bbZ_{(p)}\wedge\frakY_{(p)}\big).
    \end{equation}
By the induction-restriction adjunction, we have an isomorphism 
    \begin{equation}
    \ho\mathrm{Mod}_{H\bbZ_{(p)}}\big(H\bbZ_{(p)}\wedge\frakX_{(p)},H\bbZ_{(p)}\wedge\frakY_{(p)}\big)\cong\ho\spectra\big(\frakX_{(p)},H\bbZ_{(p)}\wedge\frakY_{(p)}\big).
    \end{equation}
By the stable Dold-Kan correspondence, we have an isomorphism 
    \begin{equation}
    R\hom\big(C_*(\frakX;\bbZ_{(p)}),C_*(\frakY;\bbZ_{(p)})\big)\cong\ho\mathrm{Mod}_{H\bbZ_{(p)}}\big(H\bbZ_{(p)}\wedge\frakX_{(p)},H\bbZ_{(p)}\wedge\frakY_{(p)}\big).
    \end{equation}
Finally, the lemma follows because a finitely-generated finitely-supported chain complex of free modules over a PID is quasi-isomorphic to its homology in the derived category.
\end{proof}

\begin{proof}[Proof of Theorem \ref{thm:appendixa}]
The integral homology of $\frakX_{(p)}$ splits as a direct sum of graded $\bbZ_{(p)}$-modules, each supported in a single degree. Let $\iota_k$ resp. $\pi_k$ denote the obvious inclusion resp. projection maps from resp. to the summand in degree $k$. These maps satisfy 
    \begin{equation}
    \pi_k\circ i_k=\identity,\;\;\sum_k\iota_k\circ\pi_k=\identity
    \end{equation}
as maps of graded abelian groups. These maps induce maps with the same relations in the derived category of $\bbZ_{(p)}$-modules. By the lemma, these maps induce maps with the same relations in $\ho\spectra$. Finally, since $\ho\spectra$ is an additive category, these maps present $\frakX_{(p)}$ as a wedge sum of pieces, each having integral homology concentrated in a single degree. 
\end{proof}

\begin{cor}\label{cor:appendix}
Suppose $C$ is a: connected, orientable, and positive-dimensional; closed smooth manifold, then there exists a non-trivial reduced stable cohomotopy class of $C$ in non-positive degree.
\end{cor}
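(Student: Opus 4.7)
The plan is to extract a non-trivial stable cohomotopy class of $C$ by combining the splitting from Theorem \ref{thm:appendixa} with a suitably chosen element of a higher stable stem at a large prime. First, since $C$ is a connected, orientable, positive-dimensional closed manifold of dimension $n \geq 1$, Poincar\'e duality gives $H_n(C;\bbZ) \cong \bbZ$, so $H_n(C;\bbZ_{(p)})$ contains a $\bbZ_{(p)}$-summand for every prime $p$. I would pick $p$ sufficiently large that Theorem \ref{thm:appendixa} applies; the resulting wedge decomposition of $\Sigma^\infty C_{(p)}$ must then contain $\Sigma^n \frakM\bbZ_{(p)} \cong \Sigma^n \bbS_{(p)}$ as a summand, since the splitting matches the one on the level of homology. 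This furnishes a retraction
\begin{equation}
\Sigma^n \bbS_{(p)} \xrightarrow{\iota} \Sigma^\infty C_{(p)} \xrightarrow{\pi} \Sigma^n \bbS_{(p)}
\end{equation}
in $\ho\spectra$ with $\pi \circ \iota = \identity$.

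Next, I would pick a non-zero class $\alpha \in \pi_m(\bbS)_{(p)}$ with $m \geq n$; such $m$ exist, indeed in infinitely many degrees, by Serre's finiteness theorem together with the non-triviality of the $p$-primary image of $J$. The composition
\begin{equation}
\Sigma^\infty C \to \Sigma^\infty C_{(p)} \xrightarrow{\pi} \Sigma^n \bbS_{(p)} \xrightarrow{\alpha} \Sigma^{n-m} \bbS_{(p)}
\end{equation}
is non-zero because precomposing with $\iota$ recovers $\alpha \neq 0$. Since $\Sigma^\infty C$ is a finite spectrum, the natural identification
\begin{equation}
[\Sigma^\infty C, \Sigma^{n-m}\bbS]_{(p)} \cong [\Sigma^\infty C, \Sigma^{n-m}\bbS_{(p)}]
\end{equation}
promotes the $p$-local non-vanishing to a non-trivial integral element of $\pi_s^{n-m}(C)$. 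Because $m \geq n$, the cohomotopical degree $n - m$ is non-positive, which is what is required.

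The only real subtlety is tracking how each hypothesis on $C$ enters the argument, and none of it is genuinely hard: connectedness fixes the basepoint convention (so $\Sigma^\infty C$ refers unambiguously to the reduced suspension spectrum), orientability guarantees a free $\bbZ$-summand in $H_n(C;\bbZ)$ (without which $H_n(C;\bbZ_{(p)})$ could vanish at odd primes and eliminate the $\Sigma^n \bbS_{(p)}$ summand), and positive-dimensionality ensures $n \geq 1$ so that a choice $m \geq n$ in a non-trivial $p$-local stem is actually available and forces $n-m \leq 0$. The existence of such stems in arbitrarily high degrees is classical, so this is the only place where one has to invoke anything beyond the appendix itself.
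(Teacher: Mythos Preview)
Your proof is correct and follows essentially the same approach as the paper: both use orientability to get $H_{\dim C}(C;\bbZ)\cong\bbZ$, invoke Theorem~\ref{thm:appendixa} at a large prime to split off a shifted $\bbS_{(p)}$-summand from $\Sigma^\infty C_{(p)}$, and then appeal to the non-triviality of the $p$-local stable stems in infinitely many positive degrees to produce the desired cohomotopy class. Your version simply makes the last step more explicit by naming a specific class $\alpha$ and writing down the composite, whereas the paper phrases the same conclusion as ``$\bbS_{(p)}$ has infinitely many non-zero stable cohomotopy groups in negative degrees.''
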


\begin{proof}
Since $C$ is orientable, we have that $H_{\dim C}(C;\bbZ)\cong\bbZ$. In particular, for any $p$, $H_{\dim C}(\Sigma^\infty C_{(p)};\bbZ)\cong\bbZ_{(p)}$. Theorem \ref{thm:appendixa} implies $\Sigma^\infty C_{(p)}$, for $p$ a sufficiently large prime, splits as a wedge sum of shifted Moore spectra with at least one $\bbS_{(p)}$ summand; the lemma follows since $\bbS_{(p)}$ has infinitely many non-zero stable cohomotopy groups in negative degrees (hence, so do $\Sigma^\infty C_{(p)}$ and $\Sigma^\infty C$).
\end{proof}

\section{Spherically framed versus framed bordism}\label{appendix:sfrfr}

In this appendix, we will elucidate the relationship between spherically framed bordism and (classical) framed bordism as it applies to the present article. Recall, we denote by $\frakR^\sfr$ the endomorphism ring spectrum $\flow^\sfr(\unit,\unit)$. Moreover, we will denote by $\frakR^\fr$ the endomorphism ring spectrum $\flow^\fr(\unit,\unit)$. We observe that there is an obvious forgetful map 
    \begin{equation}
    \frakR^\fr\to\frakR^\sfr.
    \end{equation}

\begin{defin}
Let $\bullet\in\{\sfr,\fr\}$. We define $\Omega^\bullet_*(\cdot)$ to be the multiplicative homology theory of $\bullet$-bordism defined in the usual way:
    \begin{equation}
    \Omega^\bullet_j(X)\equiv\big\{M\to X:\mathrm{\emph{$M$ closed smooth $j$-manifold with $\bullet$-structure}}\big\}/\mathrm{\emph{$\bullet$-bordism}}.
    \end{equation}
We again observe that there is an obvious forgetful map
    \begin{equation}
    \Omega^\fr_*(\cdot)\to\Omega^\sfr_*(\cdot).
    \end{equation}
Finally, there is an analogously defined multiplicative cohomology theory $\Omega^*_\bullet(\cdot)$ for finite spectra defined using Spanier-Whitehead duality; we have natural isomorphisms 
    \begin{equation}
    \Omega^\bullet_*(\cdot)\cong\Omega^{-*}_\bullet(\cdot).
    \end{equation}
\end{defin}

\begin{lem}\label{lem:appendixinjective}
The obvious forgetful map 
    \begin{equation}
    \Omega^\fr_*(\cdot)\to\Omega^\sfr_*(\cdot)
    \end{equation}
is a split-injection.
\end{lem}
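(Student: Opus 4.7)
The plan is to use Pontryagin-Thom theory to realize both sides as homotopy groups of Thom spectra and observe that the forgetful map is the inclusion of a wedge summand.

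By standard Pontryagin-Thom theory, $\Omega^\fr_*(X) \cong \pi_*(X_+ \wedge \bbS)$, corresponding to the trivial tangential structure $*\to BO$, while $\Omega^\sfr_*(X) \cong \pi_*(X_+ \wedge MF)$, where $F = \mathrm{hofib}(BJ: BO \to BGL_1\bbS)$ is the tangential structure space classifying virtual vector bundles together with a trivialization of their underlying stable spherical fibration, and $MF$ is the Thom spectrum of the structure map $\xi: F \to BO$. The forgetful map is represented, at the level of representing spectra, by the unit $\bbS \to MF$ induced by the basepoint of $F$.

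The crucial observation is that $MF \simeq \Sigma^\infty_+ F$. By construction of $F$ as a homotopy fiber, the composite $F \xrightarrow{\xi} BO \xrightarrow{BJ} BGL_1\bbS$ is canonically null-homotopic, so the universal stable spherical fibration associated to $\xi$ is trivialized; hence its Thom spectrum is the (untwisted) suspension spectrum of $F$. Under this identification, the unit $\bbS \to MF \simeq \Sigma^\infty_+ F$ is the map induced by the basepoint inclusion $* \to F$. Since $F$ is canonically pointed, $\Sigma^\infty_+ F \simeq \bbS \vee \Sigma^\infty F$, the unit is the inclusion of the $\bbS$-summand, and the projection onto this summand is a natural retraction. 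Smashing with $X_+$ then yields the desired split injection of bordism functors for every $X$.

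The main subtlety is justifying $MF \simeq \Sigma^\infty_+ F$ together with the identification of the unit map as the basepoint inclusion. This amounts to the Thom isomorphism for virtual bundles with trivialized underlying stable spherical fibration, applied to the universal example over $F$. Equivalently and more geometrically: given $(M^n, g: M \to X)$ representing a class in $\Omega^\sfr_n(X)$, embed $M \hookrightarrow S^{n+N}$ and apply the Pontryagin-Thom collapse $S^{n+N} \to \mathrm{Th}(\nu_M)$; the spherical framing of $TM$ induces one of $\nu_M$, yielding a stable equivalence $\mathrm{Th}(\nu_M) \simeq \Sigma^N M_+$; composing with $\Sigma^N g_+$ lands in $\pi_n^s(X_+) \cong \Omega^\fr_n(X)$. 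That this defines a left inverse of the forgetful map follows because the $J$-homomorphism takes a genuine vector-bundle trivialization of $\nu_M$ to the same stable equivalence $\mathrm{Th}(\nu_M) \simeq \Sigma^N M_+$ that the induced spherical trivialization produces, so both Pontryagin-Thom procedures recover the same element of $\pi_n^s(X_+)$ when applied to a truly framed manifold.
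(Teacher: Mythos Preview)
Your proof is correct. Your closing geometric paragraph is precisely the paper's argument: the paper constructs the retraction $\Omega^{\sfr}_*(X)\to\pi^{\mathrm{st}}_*(X)$ directly via the Pontryagin--Thom collapse, using the spherical trivialization of the normal bundle to identify $\mathrm{Th}(\nu_M)\simeq\Sigma^N M_+$, and then observes that the composite with the forgetful map is the classical Pontryagin--Thom isomorphism. What you add on top of this is the identification of the representing Thom spectrum $MF\simeq\Sigma^\infty_+F$ for $F=\mathrm{hofib}(BJ)$, which makes the splitting visible already at the level of spectra rather than only after passing to bordism groups. This buys you a cleaner naturality statement (the splitting is induced by a map of spectra, hence is automatically compatible with all the usual structure) and incidentally confirms the paper's later Conjecture that $\frakR^{\sfr}$ should be $\Sigma^\infty_+\mathrm{hofib}(BJ)$, at least at the level of the underlying bordism theory. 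The paper's bare-hands version is shorter and avoids invoking the Thom-spectrum formalism, but yours is arguably more illuminating as to \emph{why} the splitting exists.
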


\begin{rem}
This map is not surjective; for instance, the spherical framed bordism class of a $K3$ surface is not in its image.
\end{rem}

\begin{proof}
The proof follows since we may still perform the Pontryagin-Thom construction on a closed smooth manifold whose tangent bundle is trivial as a stable spherical fibrationa, i.e., we may define a map
    \begin{equation}
    \Omega^\sfr_*(X)\to\pi^\mathrm{st}_*(X)
    \end{equation}
such that the composition
    \begin{equation}
    \Omega^\fr_*(X)\to\Omega^\sfr_*(X)\to\pi^\mathrm{st}_*(X),
    \end{equation}
which is an isomorphism, is the usual Pontryagin-Thom construction.
\end{proof}

The following conjecture seems sensible. 

\begin{conj}\label{conj:sfrfr}
There are natural isomorphisms of multiplicative homology theories compatible with the obvious forgetful maps:
    \begin{equation}
    \begin{tikzcd}
    \frakR^\fr_*(\cdot)\arrow[r]\arrow[d,"\sim"] & \frakR^\sfr_*(\cdot)\arrow[d,"\sim"] \\
    \Omega^\fr_*(\cdot)\arrow[r] & \Omega^\sfr_*(\cdot)
    \end{tikzcd}
    .
    \end{equation}
Moreover, there are analogous natural isomorphisms of multiplicative cohomology theories compatible with the obvious forgetful maps.
\end{conj}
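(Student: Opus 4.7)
The plan is to prove the conjecture by reducing to a check at a point. Both $\frakR^\bullet_*(\cdot)$ and $\Omega^\bullet_*(\cdot)$ are multiplicative homology theories, so by Eilenberg--Steenrod a natural multiplicative transformation between them is automatically an isomorphism as soon as it is one on coefficients. For $\bullet = \fr$, the equivalence $\frakR^\fr \simeq \bbS$ from \cite[Proposition 1.10]{AB24}, together with classical Pontryagin--Thom $\pi^{st}_*(\ast) \cong \Omega^\fr_*(\ast)$, already supplies the bottom vertical arrow in the claimed square, so I would simply cite this.

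The heart of the argument is the $\sfr$-case, where I would construct a natural comparison $\mu_\sfr$ as follows. An element of $\pi_n\frakR^\sfr = \pi_n\flow^\sfr(\unit,\unit)$ is represented by a spherically framed flow bimodule $\bbX : \Sigma^n\unit \to \unit$. Since each side has a single object, the model $\vec{\calP}(x,y)$ is reduced to its minimal element, so $\bbX(x,y)$ is a \emph{closed} smooth manifold with no codimension-$1$ boundary; unpacking Definition \ref{defin:framedflowbimodule} (with $V_x$ of virtual rank $n$ and $V_y = 0$) then forces $\dim\bbX(x,y) = n$ and pins down an explicit trivialization of the stable tangent spherical fibration of $\bbX(x,y)$, i.e., a spherical framing. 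I would therefore define
    \begin{equation}
    \mu_\sfr : \pi_n\frakR^\sfr \longrightarrow \Omega^\sfr_n(\ast), \qquad [\bbX] \longmapsto \big[\bbX(x,y),\text{ inherited spherical framing}\big],
    \end{equation}
with inverse sending a spherically framed closed $n$-manifold $M$ to the evident flow bimodule whose only morphism space is $M$ (any ambiguity in the choice of $V_x$, $W(x,y)$ being absorbed by equivalences in $\flow^\sfr$ between admissible presentations). Well-definedness on the bordism side amounts to interpreting a suitable elementary spherically framed flow simplex (in the sense of Definition \ref{defin:framedflowsimplex}) as precisely a spherically framed bordism---the extra factor $\underline{\bbR}^{\abs{\{j+1,\ldots,j'\}}}$ in Definition \ref{defin:framedflowsimplex}(4) plays the role of the cobordism coordinate. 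Compatibility of $\mu_\sfr$ with the horizontal forgetful maps is then immediate, since forgetting a vector-bundle trivialization of $TM$ to a stable-spherical one corresponds exactly to forgetting framed flow data to spherically framed flow data; compatibility with the multiplicative structure reduces to the observation that composition in $\flow^\sfr$ is Cartesian product of moduli spaces, matching the ring structure on $\Omega^\sfr_*$ under Cartesian product of manifolds.

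The main obstacle I expect is not the bijection at $\pi_*$ but the higher-simplicial coherence required to upgrade $\mu_\sfr$ to an equivalence of ring spectra---equivalently, to verify that the simplicial nerve of spherically framed flow bimodules between $\Sigma^n\unit$ and $\unit$ is a model for the Thom spectrum of the universal spherically framed bundle. This is analogous to, but does not directly reduce to, \cite[Proposition 1.10]{AB24}, whose proof ultimately relies on the fact that a stable framing is a genuine \emph{vector-bundle} trivialization. As a partial sanity check, one can precompose $\mu_\sfr$ with the forgetful map $\pi_n\frakR^\fr \to \pi_n\frakR^\sfr$ and postcompose with the Pontryagin--Thom splitting $\Omega^\sfr_n(\ast) \to \pi^{st}_n(\ast)$ from Lemma \ref{lem:appendixinjective}; the resulting composite should agree with the identification $\pi_n\frakR^\fr \cong \pi_n\bbS = \pi^{st}_n(\ast)$ already known in the $\fr$-case, which pins down $\mu_\sfr$ on the $\Omega^\fr$-summand of $\Omega^\sfr$ and supports the conjectural identification on the complementary summand.
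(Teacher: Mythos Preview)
The statement you are attempting to prove is labeled in the paper as a \emph{conjecture}, not a theorem; the paper offers no proof and explicitly says only that ``the following conjecture seems sensible.'' There is therefore no proof in the paper to compare your proposal against. What the paper \emph{does} prove is the special case at a point (Proposition~\ref{prop:appendixpoint}), which it calls ``essentially tautological'': a class in $\pi_j\frakR^\bullet$ is a $\bullet$-structured flow bimodule $\Sigma^j\unit\to\unit$, which unwinds to a closed $j$-manifold with $\bullet$-structure, giving the bijection $\pi_j\frakR^\bullet\cong\Omega^\bullet_j(*)$ compatibly with the forgetful maps.

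Your construction of $\mu_\sfr$ is exactly this tautological identification at a point, and matches the paper's Proposition~\ref{prop:appendixpoint}. But the step where your argument would need to go beyond the paper---and where it does not---is precisely the one you flag yourself: upgrading the $\pi_*$-bijection to a map of ring spectra (or even to a natural transformation of homology theories). Your appeal to Eilenberg--Steenrod presupposes that a natural transformation $\frakR^\sfr_*(\cdot)\to\Omega^\sfr_*(\cdot)$ already exists; you have only built it on coefficients. Producing that transformation is equivalent to producing a spectrum map $\frakR^\sfr\to\text{(Thom spectrum for }\Omega^\sfr\text{)}$, and that in turn requires the higher-simplicial coherence you identify as ``the main obstacle.'' Since you do not resolve it, your proposal is a plausible outline rather than a proof---which is consistent with the paper's decision to leave the statement as a conjecture.
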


Note, the natural isomorphism $\frakR^\fr_*(\cdot)\to\Omega^\fr_*(\cdot)$ has already been shown by Abouzaid-Blumberg via showing there is a homotopy equivalence of ring spectra $\bbS\simeq\frakR^\fr$, cf. \cite[Corollary 8.12]{AB24}. 

\begin{conj}
There is a homotopy equivalence of ring spectra 
    \begin{equation}
    \frakR^\sfr\simeq\Sigma^\infty\mathrm{hofib}BJ,
    \end{equation}
$BJ$ is the 1-fold delooping of the $J$-homomorphism.
\end{conj}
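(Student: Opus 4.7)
The plan is to establish the equivalence by identifying both sides with the Thom spectrum of the universal stable vector bundle pulled back along the tangential structure $B\equiv\mathrm{hofib}(BJ)\to BO$, in direct analogy with the Abouzaid-Blumberg equivalence $\frakR^\fr\simeq\bbS$ (cf. \cite[Corollary 8.12]{AB24}). The first step is to observe that a trivialization of a stable vector bundle as a spherical fibration is precisely the data of a lift through the map $B\to BO$; hence, by the classical Pontryagin-Thom theorem, the spherically framed bordism spectrum representing $\Omega^\sfr_*$ is the Thom spectrum $MB$ of $B\to BO$. The crucial point is that the pullback of the universal bundle to $B$ carries a tautological trivialization as a stable spherical fibration, so the Thom isomorphism canonically splits $MB\simeq\Sigma^\infty_+B$. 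Combined with Conjecture \ref{conj:sfrfr} (whose framed analogue is already known), this identifies $\pi_*\frakR^\sfr$ with $\pi_*\Sigma^\infty_+\mathrm{hofib}(BJ)$.

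Second, I would upgrade this to an equivalence of ring spectra. Since $BJ$ is a map of $\bbE_\infty$-spaces, $\mathrm{hofib}(BJ)$ inherits an $\bbE_\infty$-structure, making $\Sigma^\infty_+\mathrm{hofib}(BJ)$ an $\bbE_\infty$-ring spectrum whose multiplication on classes is the Cartesian product of manifolds combined with Whitney sum of spherical framings. On the other side, $\frakR^\sfr$ is a ring spectrum via composition in $\flow^\sfr$, which on homotopy groups encodes the same operation. Compatibility between the two ring structures follows from the Eckmann-Hilton interchange between the $\bbE_\infty$-structure coming from addition of index bundles and the $\bbE_1$-structure from loop concatenation, already invoked in the proofs of Propositions \ref{prop:unparamterizedframing} and \ref{prop:coherentorientations}.

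The third step, the main one, is to realize the equivalence inside $\flow^\sfr$ itself, not merely on homotopy groups. For this I would adapt the Pontryagin-Thom construction of Section 8 of \cite{AB24}, which in the framed case produces a functor $\flow^\fr\to\spectra$ sending $\unit$ to $\bbS$. The analogous functor $\flow^\sfr\to\mathrm{Mod}_{\Sigma^\infty_+B}$ would send a spherically framed flow category $\bbX$ to a spectrum built by Pontryagin-Thom collapse on the (stably spherically framed) morphism spaces, each equipped with a tautological classifying map to $B$ recording its framing data; restricting to endomorphisms of the unit then yields the desired ring-level equivalence.

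The main obstacle is carrying out this third step rigorously. A classical framing canonically produces a collapse map to $\bbS$, whereas a spherical framing only produces a collapse map to $\Sigma^\infty_+B$ after being paired with its classifying map to $B$; thus the Pontryagin-Thom procedure must be done in families parameterized by $B$, and one must track the additional coherence needed to match the gluing and associativity data intrinsic to the $\flow^\sfr$ formalism. A subsidiary technical point is verifying that the canonical basepoint of $\mathrm{hofib}(BJ)$ corresponds, after applying $\Sigma^\infty_+$, to the multiplicative unit of $\frakR^\sfr$, so that the constructed equivalence is genuinely one of ring spectra and not merely of underlying spectra.
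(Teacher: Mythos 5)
This statement is labeled a \emph{conjecture} in the paper, and the paper supplies no proof of it; so there is no argument for your attempt to be compared against. What you have produced is a plausible strategy for establishing the conjecture, not a comparison case, and I will review it on those terms.

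Your identification of the target is correct and is surely what the authors have in mind. A lift of $\xi\colon X\to BO$ through $\mathrm{hofib}(BJ)\to BO$ is exactly a null-homotopy of $J(\xi)$, so the relevant tangential structure is $\mathrm{hofib}(BJ)\to BO$; the pullback of the universal bundle to $\mathrm{hofib}(BJ)$ acquires a tautological stable spherical trivialization, whence its Thom spectrum (or that of its negative --- the distinction between tangential and normal conventions is harmless here precisely because both carry canonical spherical trivializations) is equivalent to $\Sigma^\infty_+\mathrm{hofib}(BJ)$. Note in passing that you correctly write $\Sigma^\infty_+$ where the conjecture as stated writes $\Sigma^\infty$; since $\mathrm{hofib}(BJ)$ is connected, the reduced suspension spectrum has trivial $\pi_0$ and cannot carry a unital ring structure, so $\Sigma^\infty_+$ must be what is intended.

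However, the genuine content of the conjecture is not the identification of $\pi_*\frakR^\sfr$ with $\Omega^\sfr_*(*)\cong\pi_*\Sigma^\infty_+\mathrm{hofib}(BJ)$ --- the former isomorphism is already the essentially formal Proposition~\ref{prop:appendixpoint}, and the latter is classical Pontryagin--Thom. The difficulty is precisely your third step: producing an actual equivalence of ring spectra, which requires constructing a Pontryagin--Thom functor out of $\flow^\sfr$ landing in $\mathrm{Mod}_{\Sigma^\infty_+\mathrm{hofib}(BJ)}$ and checking coherent compatibility of the collapse maps with the composition, gluing and associativity data built into the definition of $\flow^\sfr$. You flag this yourself, but you offer only the outline that one should ``adapt Section 8 of \cite{AB24},'' without engaging with the families-over-$B$ bookkeeping that is the actual obstacle. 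Likewise, your ring-structure comparison in step two appeals to Eckmann--Hilton by pointing to its appearance in the proofs of Propositions~\ref{prop:unparamterizedframing} and~\ref{prop:coherentorientations}, but those instances concern coherence of framings under gluing of moduli spaces, not the identification of the composition product in $\flow^\sfr(\unit,\unit)$ with the external product of bordism classes; transferring the argument would itself require justification. In short: the reduction to a statement about Thom spectra and tangential structures is right, and is almost certainly what motivated the conjecture, but the step that would make this a theorem --- the functorial Pontryagin--Thom collapse at the level of $\flow^\sfr$ --- is exactly the step you leave as a sketch, and it is exactly why the authors state this as a conjecture rather than a proposition.
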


We observe that the conclusion of Conjecture \ref{conj:sfrfr} is essentially tautological when considered at a point.

\begin{prop}\label{prop:appendixpoint}
We have a commutative diagram
    \begin{equation}
    \begin{tikzcd}
    \pi_j(\frakR^\fr)\cong\frakR^\fr_j(*)\arrow[r]\arrow[d,"\sim"] & \pi_j(\frakR^\sfr)\cong\frakR^\sfr_j(*)\arrow[d,"\sim"] \\
    \Omega^\fr_j(*)\arrow[r] & \Omega^\sfr_j(*)
    \end{tikzcd}
    .
    \end{equation}
\end{prop}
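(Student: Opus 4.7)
The plan is to construct the two vertical Pontryagin-Thom-type isomorphisms directly, and then observe that they intertwine the forgetful ring map on the left with the forgetful map of bordism groups on the right. The top horizontal identifications $\pi_j(\frakR^\bullet) \cong \frakR^\bullet_j(*)$ are definitional for $\bullet \in \{\fr,\sfr\}$, since $\frakR^\bullet_j(*) = \pi_j(\frakR^\bullet \wedge \bbS) = \pi_j(\frakR^\bullet)$.

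The first step is to unpack what a $\bullet$-framed flow bimodule $\Sigma^j\unit \to \unit$ is. Since $\unit$ has a single object with no non-identity morphisms, the model $\vec{\calP}(*,*)$ has a unique object, so such a bimodule is determined by a single closed smooth manifold $\bbX(*,*)$ (with no corners, since there is no composition or breaking to encode). Specializing Definition \ref{defin:framedflowbimodule} with $V_* = \bbR^j$ on the source and $V_* = 0$ on the target forces the virtual rank of $I(*,*)$ to vanish and yields a stable trivialization of $T\bbX(*,*) - \bbR^j$ as a real virtual bundle (in the $\fr$ case) or as a stable spherical fibration (in the $\sfr$ case); in particular, $\dim \bbX(*,*) = j$. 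Thus a $0$-simplex of $\flow^\bullet(\unit,\unit)$ of degree $j$ is exactly a closed $j$-dimensional $\bullet$-framed manifold. The same unpacking applied to elementary $\bullet$-framed flow $2$-simplices between two $0$-simplices shows that $1$-simplices correspond exactly to $\bullet$-bordisms. This gives a well-defined natural bijection $\pi_j(\frakR^\bullet) \cong \Omega^\bullet_j(*)$ for $j \geq 0$; for $j < 0$ both sides vanish (the right trivially, the left because $\frakR^\bullet$ is connective, as all underlying manifolds have non-negative dimension).

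Finally, the square commutes essentially by construction: the ring map $\frakR^\fr \to \frakR^\sfr$ is induced by the functor $\flow^\fr \to \flow^\sfr$ which postcomposes the trivialization of $I(x,y)$ with $BO \to BGL_1\bbS$, and hence, on underlying closed $j$-manifolds with structure, is exactly the forgetful map from framings to spherical framings. The two vertical bijections therefore identify this map with $\Omega^\fr_j(*) \to \Omega^\sfr_j(*)$.

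The main obstacle, such as it is, is the verification that the higher simplex structure of $\flow^\bullet(\unit,\unit)$ captures precisely the full bordism equivalence relation (and no more); but this is a routine iteration of the unpacking above, with higher $\bullet$-framed flow simplices into $\unit \to \unit$ collapsing to stratified compact manifolds with corners whose boundary data encode compositions of bordisms. The analogous statement for $\bullet = \fr$ is already part of Abouzaid-Blumberg's proof that $\frakR^\fr \simeq \bbS$, cf. \cite[Corollary 8.12]{AB24}, and the $\sfr$ case is parallel, so no new technical input is required.
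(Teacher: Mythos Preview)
Your proposal is correct and takes essentially the same approach as the paper: both arguments reduce to unpacking $\pi_j\flow^\bullet(\unit,\unit)\cong\pi_0\flow^\bullet(\Sigma^j\unit,\unit)$ and identifying a $\bullet$-framed flow bimodule $\Sigma^j\unit\to\unit$ with a closed $j$-manifold carrying a $\bullet$-structure, with homotopies corresponding to bordisms. The paper compresses this into a single displayed chain of isomorphisms, whereas you spell out the content of each step; your added remarks on connectivity for $j<0$ and on why the square commutes under the forgetful functor are accurate but not strictly needed, since the paper regards the final identification as tautological.
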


\begin{proof}
We have that 
    \begin{equation}
    \pi_j(\frakR^\bullet)=\pi_j\flow^\bullet(\unit,\unit)\cong\pi_0\flow^\bullet(\unit,\Omega^j\unit)\cong\pi_0\flow^\bullet(\Sigma^j\unit,\unit)\cong\Omega^\bullet_j(*);
    \end{equation}
the proposition follows.
\end{proof}

\begin{prop}\label{prop:sfrfr}
Let $\bbX$ be a framed flow category with finite object set and $\frakX^\bullet$ its associated spectrum $\flow^\bullet(\unit,\bbX)$. We have a homotopy equivalence 
    \begin{equation}
    \frakX^\sfr\simeq\frakX^\fr\wedge\frakR^\sfr.
    \end{equation}
\end{prop}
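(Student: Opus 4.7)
The plan is to exhibit the desired equivalence as an instance of extension of scalars along the forgetful map of ring spectra $\frakR^\fr \to \frakR^\sfr$. First, I would carefully define a forgetful $\infty$-functor $U:\flow^\fr\to\flow^\sfr$: on $0$-simplices it sends a framed flow category $\bbX$ to the spherically framed flow category with the same underlying moduli spaces $\bbX(x,y)$, the same $V_x$, $W(x,y)$, and the same $I(x,y)$, now equipped with the trivialization as a stable spherical fibration obtained by applying the forgetful map $BO\to BGL_1\bbS$ to the given trivialization as a real virtual bundle. One checks that $U$ is compatible with higher simplices because the data in Definitions \ref{defin:framedflowsimplex} and \ref{defin:framedflowbimodule} is set up so that the spherically framed structure refines the framed structure in exactly this way, and moreover $U$ carries $\unit^\fr$ to $\unit^\sfr$.

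Next, I would promote $U$ to an enriched functor relative to the forgetful ring map $\frakR^\fr\to\frakR^\sfr$ induced by $U$ on endomorphisms of the unit. Concretely, the natural map
\begin{equation}
\flow^\fr(\unit,\bbX)\wedge_{\frakR^\fr}\frakR^\sfr\to\flow^\sfr(\unit,U(\bbX))
\end{equation}
defined by $U$ on mapping spectra and by the $\frakR^\sfr$-module structure on the codomain is the candidate equivalence. I would verify it is an equivalence by the standard ``change-of-coefficients'' argument: both sides are exact functors of $\bbX\in\flow^\fr$ (by the stability of $\flow^\fr$ and $\mathrm{Mod}_{\frakR^\sfr}$), they commute with finite direct sums, and they agree on the unit $\bbX=\unit^\fr$ by construction (tautologically $\frakR^\fr\wedge_{\frakR^\fr}\frakR^\sfr\simeq\frakR^\sfr$). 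Since $\flow^\fr$ is the stable $\infty$-category of spectra (cf.\ Remark \ref{rem:structuredflow}) and $\bbX$ has finite object set, $\frakX^\fr=\flow^\fr(\unit,\bbX)$ is a compact object, hence built from $\unit^\fr$ by finite colimits and shifts; the two exact functors therefore agree on $\bbX$.

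Finally, invoking the equivalence $\frakR^\fr\simeq\bbS$ of \cite[Corollary 8.12]{AB24}, the relative smash product simplifies:
\begin{equation}
\frakX^\fr\wedge_{\frakR^\fr}\frakR^\sfr\simeq\frakX^\fr\wedge_\bbS\frakR^\sfr\simeq\frakX^\fr\wedge\frakR^\sfr,
\end{equation}
and combining with the previous step gives $\frakX^\sfr\simeq\frakX^\fr\wedge\frakR^\sfr$.

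The main obstacle I anticipate is the second step, namely rigorously checking that the comparison map is an equivalence. The subtlety is that the spherically framed mapping spectrum $\flow^\sfr(\unit,U(\bbX))$ is built out of moduli spaces whose tangent bundles are only trivialized as stable spherical fibrations, and one must make sure that the colimit description of this mapping spectrum genuinely factors through base change along $\frakR^\fr\to\frakR^\sfr$ rather than picking up extra structure; this amounts to using that the trivializations of $I(x,y)$ as spherical fibrations in $U(\bbX)$ come from the chosen trivializations as virtual bundles in $\bbX$ by a functorial construction, so no new data is introduced.
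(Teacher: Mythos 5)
Your proposal is correct and follows essentially the same route as the paper: define the forgetful functor $\flow^\fr\to\flow^\sfr$, observe that both $\bbX\mapsto\flow^\sfr(\unit,U(\bbX))$ and $\bbX\mapsto\flow^\fr(\unit,\bbX)\wedge\frakR^\sfr$ are exact (the paper says colimit-preserving) functors out of $\flow^\fr\simeq\spectra$ that agree on the generator $\unit$, and conclude they agree in general. The paper phrases this as commutativity of a square of stable $\infty$-categories rather than as a comparison natural transformation, and invokes generation by $\unit$ plus colimit-preservation instead of your finiteness/compactness argument, but these are just two ways of saying the same thing; your write-up of the comparison map and the reduction via compactness is a reasonable expansion of the paper's terse proof.
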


\begin{proof}
We wish to show the following diagram of stable $\infty$-categories and exact $\infty$-functors commutes:
    \begin{equation}
    \begin{tikzcd}
    \flow^\fr\arrow[r,"{\mathrm{forget}}"]\arrow[d,"{\flow^\fr(\unit,\cdot)}"] & \flow^\sfr\arrow[d,"{\flow^\sfr(\unit,\cdot)}"] \\
    \mathrm{Mod}_{\frakR^\fr}\arrow[r,"(\cdot)\wedge\frakR^\sfr"] & \mathrm{Mod}_{\frakR^\sfr}
    \end{tikzcd}
    .
    \end{equation}
Since $\flow^\fr$ is generated by $\unit$ (i.e., $\spectra\cong\flow^\fr$ is generated by $\bbS\cong\unit$) and each $\infty$-functor is colimit-preserving, it suffices to prove the statement for $\unit$. But now the statement follows by definition of $\frakR^\sfr$.
\end{proof}

We will require the following result.

\begin{thm}\label{thm:appendixb}
Let $\frakX^\fr$ be a finite spectrum. Suppose $H_*(\frakX;\bbZ)$ has a non-torsion element, then the map 
    \begin{equation}\label{eqn:appendixthm}
    \ho\mathrm{Mod}_{\frakR^\fr}(\frakX^\fr,\Sigma^j\frakR^\fr)\to\ho\mathrm{Mod}_{\frakR^\sfr}(\frakX^\sfr,\Sigma^j\frakR^\sfr),
    \end{equation}
where $\frakX^\sfr\equiv\frakX^\fr\wedge\frakR^\sfr$, is a non-zero map of abelian groups for infinitely many negative $j$.
\end{thm}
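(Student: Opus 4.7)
The plan is to first unpack the map. By Abouzaid-Blumberg's identification $\frakR^\fr \simeq \bbS$ (cf. \cite[Corollary 8.12]{AB24}), the source of \eqref{eqn:appendixthm} is the stable cohomotopy group $[\frakX^\fr, \Sigma^j \bbS]$. The free-forget adjunction $(\cdot) \wedge \frakR^\sfr : \spectra \rightleftarrows \mathrm{Mod}_{\frakR^\sfr} : \mathrm{forget}$ identifies the target with $[\frakX^\fr, \Sigma^j \frakR^\sfr]$, and the map itself becomes postcomposition with the unit $\bbS \to \frakR^\sfr$.

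Next, I reduce to a sphere summand by localizing at a sufficiently large prime $p$. Pick $p$ larger than every prime dividing any torsion summand of $H_*(\frakX^\fr; \bbZ)$ and larger than the prime bound of Theorem \ref{thm:appendixa} for $\frakX^\fr$. The non-torsion hypothesis then guarantees that $H_d(\frakX^\fr_{(p)}; \bbZ_{(p)})$ contains a $\bbZ_{(p)}$ summand for some integer $d$. Theorem \ref{thm:appendixa} expresses $\frakX^\fr_{(p)}$ as a wedge of shifted Moore spectra, and matching integral homology identifies $\Sigma^d \bbS_{(p)} = \Sigma^d \frakM \bbZ_{(p)}$ as one such wedge summand; let $r: \frakX^\fr_{(p)} \to \Sigma^d \bbS_{(p)}$ denote the corresponding retraction.

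The diagram chase proceeds via naturality: using the identification $[\frakX, Y]_{(p)} \cong [\frakX_{(p)}, Y_{(p)}]$ for finite $\frakX$, I obtain a commutative square
    \begin{equation}
    \begin{tikzcd}
    {[\frakX^\fr, \Sigma^j \bbS]_{(p)}} \arrow[r] \arrow[d, "r^*"] & {[\frakX^\fr, \Sigma^j \frakR^\sfr]_{(p)}} \arrow[d, "r^*"] \\
    \pi_{d-j}(\bbS_{(p)}) \arrow[r] & \pi_{d-j}(\frakR^\sfr_{(p)})
    \end{tikzcd}
    \end{equation}
in which both verticals are split-surjective (the splittings coming from the inclusion of the wedge summand). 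The bottom arrow, via Proposition \ref{prop:appendixpoint} together with $\frakR^\fr \simeq \bbS$, is the $p$-localization of $\Omega^\fr_{d-j}(*) \to \Omega^\sfr_{d-j}(*)$, which Lemma \ref{lem:appendixinjective} shows is split-injective. Consequently, whenever $\pi_{d-j}(\bbS_{(p)}) \neq 0$ the bottom arrow is non-zero, which via the split surjections forces the top arrow to be non-zero; since non-vanishing of the $p$-localized map implies non-vanishing of \eqref{eqn:appendixthm} itself, the latter is non-zero in degree $j$.

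It remains to note that $\pi_{d-j}(\bbS_{(p)})$ is non-trivial for infinitely many $j < 0$, which follows from classical computations of the stable stems (for instance, the $p$-primary content of the image of the $J$-homomorphism provides non-trivial elements in arbitrarily high positive degrees); taking $j = d - n$ for such large $n$ gives the claim. The main obstacle here is not any individual step but rather assembling the three ingredients — Pontryagin-Thom split-injectivity from Lemma \ref{lem:appendixinjective}, the Moore-spectrum decomposition at large primes from Theorem \ref{thm:appendixa}, and the infinitude of $p$-primary stable stems — into a uniform statement about infinitely many negative degrees in which \eqref{eqn:appendixthm} is non-zero.
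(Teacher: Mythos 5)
Your proof is correct and follows essentially the same route as the paper: localize at a sufficiently large prime $p$, invoke Theorem \ref{thm:appendixa} to split off a shifted $\bbS_{(p)}$ wedge summand (guaranteed by the non-torsion hypothesis), reduce non-vanishing of \eqref{eqn:appendixthm} to the split-injectivity of $\Omega^\fr_*(*) \to \Omega^\sfr_*(*)$ via Proposition \ref{prop:appendixpoint} and Lemma \ref{lem:appendixinjective}, and conclude using the infinitude of non-trivial $p$-primary stable stems in positive degree. One small notational slip: the vertical maps in your square are precomposition with the \emph{inclusion} $\iota$ of the wedge summand (hence $\iota^*$, split by $r^*$), not with the retraction $r$; the argument itself is unaffected.
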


\begin{proof}
By Theorem \ref{thm:appendixa}, we may choose a sufficiently large prime $p$ such that the $p$-localization $\frakX^\fr_{(p)}$ becomes isomorphic to a direct sum of shifted Moore spectra in $\ho\spectra$; by assumption, this direct sum decomposition contains a $\bbS_{(p)}$ summand. In particular, we have that $\frakX^\sfr$ may be decomposed as a direct sum which contains a $\frakR^\sfr_{(p)}$ summand. The non-triviality of the map \eqref{eqn:appendixthm} is implied by the non-triviality of the map 
    \begin{equation}
    \pi^j\bbS_{(p)}\cong\ho\mathrm{Mod}_{\frakR^\fr}(\bbS_{(p)},\Sigma^j\frakR^\fr)\to\ho\mathrm{Mod}_{\frakR^\sfr}(\frakR^\sfr_{(p)},\Sigma^j\frakR^\sfr)
    \end{equation}
(i.e., we use the direct sum decomposition), but, by Proposition \ref{prop:appendixpoint}, this is simply the $p$-localization of the map 
    \begin{equation}
    \Omega^\fr_*(*)\to\Omega^\sfr_*(*)
    \end{equation}
which is a split-injection by Proposition \ref{lem:appendixinjective}. The theorem follows since $\bbS_{(p)}$ has infinitely many non-zero stable cohomotopy groups in negative degrees.
\end{proof}
\bibliography{References}{}
\bibliographystyle{alpha.bst}
\end{document}